\newtheorem{thm}{Theorem}[section]
\newtheorem*{thm*}{Theorem}
\newtheorem{qu}{Question}
\newtheorem{lem}[thm]{Lemma}
\newtheorem{cor}[thm]{Corollary}
\newtheorem*{cor*}{Corollary}
\newtheorem{prop}[thm]{Proposition}
\theoremstyle{definition}
\newtheorem*{defn}{Definition}
\newtheorem{rem}[thm]{Remark}
\theoremstyle{remark}
\newtheorem{examp}{Example}
\newtheorem*{nota}{Notation}
\DeclareMathOperator{\R}{{\mathbb R}}
  \DeclareMathOperator{\Z}{{\mathbb Z}}
  \DeclareMathOperator{\N}{\mathbb N}
  \DeclareMathOperator{\Ss}{{\mathscr{U}}}
  \renewcommand{\SS}{{\mathscr U}}
  \DeclareMathOperator{\Pp}{{\mathscr P}}
  \DeclareMathOperator{\PP}{{\mathscr P}}
  \DeclareMathOperator{\Aut}{Aut}
  \DeclareMathOperator{\Sl}{SL}
  \DeclareMathOperator{\Perm}{Perm}
 \DeclareMathOperator{\conv}{conv}
 \DeclareMathOperator{\id}{id}
  \DeclareMathOperator*{\dcup}{\sqcup}
\DeclareMathOperator*{\limto}{\longrightarrow}
  \renewcommand{\phi}{\varphi}
  \renewcommand{\epsilon}{\varepsilon}
  \DeclareMathOperator{\diam}{diam}
  \newcommand{\frei}{\parbox{1cm}{}}
\title{Amenable groups and a geometric view on unitarisability}
\author{Peter Schlicht}
\address{P.S., EPFL, SB MATHGEOM EGG, MA C3 584, Station 8, CH-1015 Lausanne }
\email{Peter.Schlicht@epfl.ch}
\date{\today}
\begin{document}
\begin{abstract}
  We investigate unitarisability of groups $\Gamma$ by looking at actions of $\Gamma$ on the cone of positive invertible operators of a Hilbert space. This way, we can reprove results previously attained by Gilles Pisier, in a rather intuitive and geometric fashion.\\[0.3cm]
By constructing barycenters of finite sets in a more general class of geodesic metric spaces, we prove a fixed point theorem for actions of amenable groups by isometries. In particular, we give geometric proofs for the facts that extensions of unitarisable groups by amenable groups are unitarisable and virtual unitarisability implies unitarisability and calculate some corresponding universal constants.
\end{abstract}
\maketitle
\tableofcontents 
\section{Introduction}
A representation $\pi$ of a group $\Gamma$ on a Hilbert space $H$ is called \textbf{unitarisable}, if there is an operator $S$ in the algebra $B(H)$ of bounded operators on $H$, such that $S\pi(\gamma)S^{-1}$ is a unitary for every $\gamma\in\Gamma$. Such representations are \textbf{uniformly bounded} by $\Vert S\Vert\cdot\Vert S^{-1}\Vert$ (i.e. $\Vert\pi(\gamma)\Vert$ has a uniform bound over all of $\Gamma$).\par
The group $\Gamma$ is then called a $\textbf{unitarisable group}$, iff every uniformly bounded representation is unitarisable. It was found in 1947 (\cite{sznagy}), that the group $\Z$ of integers is unitarisable. Later, this was generalized to amenable groups (independently \cite{day}, \cite{dix}, \cite{naka}), which led Dixmier to pose the following question:
\begin{qu}[Dixmier]
Is every unitarisable group amenable?
\end{qu}
First examples of non-unitarisable groups were found in 1955 by Ehrenpreis and Mautner (\cite{ehrenp}), who showed, that $\Sl_2(\R)$ is not unitarisable. Later (e.g. \cite{pytsz}), explicit examples of uniformly bounded representations of the free group on two generators were constructed.\par
First examples of non-unitarisable groups not containing non-abelian free subgroups were given by Epstein and Monod in \cite{epm} using groups constructed by Osin (\cite{osin}). For a more detailed survey on this subject, the reader may be referred to \cite{pis3} or \cite{oza}.\par
Given a representation $\pi$ as above, one can define an action of $\Gamma$ on the cone $\Pp(H)$ of positive invertible operators on $H$, which has a fixed point if and only if $\pi$ is unitarisable.\par
Moreover, this action is isometric with respect to the Thompson metric defined by $d(x,y):=\Vert\ln x^{-\frac12}yx^{-\frac12}\Vert$, where $\Vert\cdot\Vert$ denotes the operator norm.\par
In the first part of this article, by investigating this metric we give a geometric and simple proof to the following theorem, which was previously attained algebraically by Gilles Pisier in \cite{pis}. Here, by $\vert\pi\vert$, we denote the smallest uniform bound for a uniformly bounded representation $\pi$.
\begin{thm}\label{eins}
For a unitarisable group $\Gamma$, there are constants $K(\Gamma),\alpha(\Gamma)>0$ such that for every uniformly bounded $\pi$, there is a bounded operator $S$ unitarising $\pi$ with $\Vert S\Vert\cdot\Vert S^{-1}\Vert\leq K(\Gamma)\cdot\vert\pi\vert^{\alpha(\Gamma)}$.
\end{thm}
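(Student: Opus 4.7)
The plan is to work throughout with the isometric action of $\Gamma$ on $(\Pp(H),d)$, as set up in the preceding discussion. A unitariser $S$ of $\pi$ corresponds to a fixed point $x=S^{*}S$ of this action with $\Vert S\Vert\cdot\Vert S^{-1}\Vert \le e^{d(1,x)}$, while the orbit estimate $d(1,\pi(\gamma)^{*}\pi(\gamma)) = \Vert\ln\pi(\gamma)^{*}\pi(\gamma)\Vert \le 2\ln\Vert\pi(\gamma)\Vert$ gives $\diam(\Gamma\cdot 1) \le 2\ln|\pi|$. The theorem thus reduces to showing that, for every uniformly bounded $\pi$, there is a fixed point $x_{\pi}$ satisfying
$$d(1,x_{\pi}) \le \ln K(\Gamma) + \alpha(\Gamma)\ln|\pi|.$$

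I would proceed in two steps. First, for each $c\ge 1$, I would show that
$$R(c) := \sup\{\,d(1,\mathrm{Fix}(\pi)) : \pi \text{ uniformly bounded}, |\pi|\le c\,\} < \infty.$$
Unitarisability gives $\mathrm{Fix}(\pi)\neq\emptyset$, hence pointwise finiteness. To upgrade this to a uniform bound I would use Baire category: after amplifying to a fixed separable Hilbert space of countable dimension, the space of uniformly bounded representations with $|\pi|\le c$ becomes a complete metric space in the topology of pointwise norm convergence, and it is covered by the closed sets $\{\pi : d(1,\mathrm{Fix}(\pi))\le n\}$. Baire produces a nonempty interior, and a translation argument using conjugation by elements of $\mathrm{GL}(H)$ upgrades this local bound to the uniform bound $R(c)<\infty$.

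The second, technically harder step is to upgrade $R(c)<\infty$ to the polynomial bound $R(c) \le \ln K(\Gamma) + \alpha(\Gamma)\ln c$. The natural mechanism is a submultiplicativity estimate $R(c_{1}c_{2})\le R(c_{1})+R(c_{2})$, from which Fekete's subadditivity lemma applied to $t\mapsto R(e^{t})$ yields a linear majorant in $\ln c$. To obtain this I would use a geometric rescaling: given $\pi$ with $|\pi|\le c_{1}c_{2}$, conjugate by $T\in\mathrm{GL}(H)$ chosen along a Thompson-geodesic from $1$ towards the centre of the orbit so that $|T\pi T^{-1}|\le c_{1}$, at a cost $\ln(\Vert T\Vert\cdot\Vert T^{-1}\Vert)\le R(c_{2})$; composing with a unitariser of the conjugated representation then gives a unitariser of $\pi$ with total cost $R(c_{1})+R(c_{2})$, whence the estimate.

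The main obstacle is precisely this rescaling step, because $T$ must be produced without a unitariser in hand and in a manner compatible with the group action. I expect to circumvent this by exploiting the order structure on $\Pp(H)$ and the specific form of the Thompson metric to construct $T$ intrinsically from the isometric action itself, for instance by moving a controlled distance along a radial direction towards the orbit and verifying that this genuinely contracts the orbit diameter. Making this construction rigorous, together with ensuring the Baire/translation argument of the first step is genuinely uniform across Hilbert spaces, constitutes the bulk of the technical work.
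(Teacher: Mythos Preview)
Your two-step outline has the right shape, but both steps contain genuine gaps, and in each case the paper's approach is both simpler and avoids the obstacle you identify.

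\textbf{Step 1.} The Baire argument is not convincing. Even granting that the sets $\{\pi : d(1,\mathrm{Fix}(\pi))\le n\}$ are closed in the pointwise-norm topology, the ``translation argument'' from a nonempty open set to all of $\{\pi : |\pi|\le c\}$ is not there: conjugation by $\mathrm{GL}(H)$ does not act transitively (or even with dense orbits) on this space, and there is no obvious group structure to exploit. The paper bypasses this entirely with the direct-sum trick: if there were $\pi_n$ with $|\pi_n|\le 2$ and smallest unitariser of size $>n$, then $\bigoplus_n\pi_n$ would be uniformly bounded by $2$ yet admit no bounded unitariser, since any unitariser restricts (up to a unitary) to a unitariser of each summand. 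This gives $R(2)<\infty$ in one line.

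\textbf{Step 2.} You write that the rescaling operator $T$ ``must be produced without a unitariser in hand'', and identify this as the main obstacle. But this constraint is self-imposed: $\Gamma$ is assumed unitarisable, so the smallest unitariser $S$ of $\pi$ \emph{exists} (Lemma~\ref{infgleichmin}); what is unknown is only its \emph{size}. The paper takes $T=S^{t}$, i.e.\ moves along the Thompson geodesic from $\id_H$ to the fixed point $S^2$. The key geometric fact is Lemma~\ref{growing}: convexity of $d$ along geodesics gives $|\pi_t|\le|\pi|^{1-t}$ for $\pi_t:=S^{-t}\pi S^{t}$, while Corollary~\ref{similar} says the smallest unitariser of $\pi_t$ has size exactly $s(S)^{1-t}$. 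So one can rescale any bad $\pi$ to have $|\pi_t|=2$ while retaining a large ratio between unitariser size and $|\pi_t|$.

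\textbf{Overall structure.} The paper does not separate the argument into ``$R(c)<\infty$'' followed by ``submultiplicativity''. Instead it runs a single contradiction: assume that for every $n$ there is $\pi_n$ with $s(S_n)>n|\pi_n|^n$; use the $S_n^t$-rescaling to arrange $|\pi_n|\le 2$ while keeping $s(S_n)>n$; then the direct sum $\bigoplus\pi_n$ is uniformly bounded but not unitarisable. Your submultiplicativity route $R(c_1c_2)\le R(c_1)+R(c_2)$ is not what falls out of these lemmas; what one actually gets from Lemma~\ref{growing} and Corollary~\ref{similar} is the homogeneity-type bound $R(c)\le \tfrac{\ln c}{\ln c_0}\,R(c_0)$ for $c\ge c_0$, which already yields the polynomial estimate once $R(c_0)<\infty$ is known.
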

This theorem translates nicely into the metric setup of actions on $\Pp(H)$ coming from uniformly bounded representations (Corollary \ref{einsgeom}) which draws our attention to $\Pp(H)$ as a metric space. In the second part, we then prove some topological and geometric facts about $\left(\Pp(H),d\right)$. In particular, we show that the metric topology coincides with the restriction of the toplogy coming from the operator norm (Theorem \ref{sametop}) and construct midpoint sets for bounded sets.\par
Finally, we introduce the concept of a GCB-space, which are complete geodesic spaces generalizing complete CAT(0)-spaces and the space $(\Pp(H),d)$. In such spaces, we construct barycenters for finite sets and deduce a fixed point theorem for actions of amenable groups.\par
As consequences of this theorem, we show, that virtual unitarisability is equivalent to unitarisability and that extensions of unitarisable groups by amenable groups are unitarisable. Both these facts seem not too deep but have apparently not yet appeared in the literature. Depending on the universal constants $\alpha(\Gamma')$ and $K(\Gamma')$ in Theorem \ref{eins}, we calculate the corresponding constants for unitarisable groups $\Gamma$, which are extensions of a unitarisable group $\Gamma'$ by an amenable group or contain $\Gamma'$ as a finite index unitarisable subgroup (Corallary \ref{virtunit} and Theorem \ref{extend}). This way, we show that $\Gamma$ will not be "less amenable", than $\Gamma'$ (see Remark \ref{end}).
\section{A geometric approach to unitarisability}
\begin{nota}As in the introduction, $\Gamma$ shall always denote a discrete and countable group and  $\pi$  a uniformly bounded, linear representation of $\Gamma$ on a separable Hilbert space $H$.\par
We denote by $B(H)$ the algebra of bounded operators on $H$ and by $\Pp(H)\subset B(H)$ the cone of positive and invertible operators. We will often omit the $\pi$ in speaking about the images under $\pi$ of $\gamma\in\Gamma$. In particular, $\gamma^*$ shall mean $\pi(\gamma)^*$.\end{nota}
For a uniformly bounded representation $\pi$, $\vert\pi\vert\in\R_{\geq 1}$ stands for the smallest uniform bound, which we will call the \textbf{size of the representation}. By the \textbf{size $s(S)$ of an invertible operator} $S$, we mean the positive number
$s(S):=\Vert S\Vert\cdot\Vert S^{-1}\Vert$. If $\pi$ is unitarisable, we shall denote the set of all unitarisers by $\Ss(\pi)$.
\subsection{About smallest unitarisers and fixed points}
For a given representation $\pi$ of a group $\Gamma$ on a Hilbert space $H$, we can define the following action of $\Gamma$ on $\Pp(H)$:
\begin{align*}\rho_\pi:\Gamma\times\PP(H)\to\PP(H),~(\gamma,P)\mapsto\gamma P\gamma^*
\end{align*}
To keep formulas simple, we will abbreviate the notation by writing $\gamma x$ instead of $\rho(\gamma,x)$ whenever the action is clear from the context. In particular $\Gamma x$ is the orbit of $x$ under $\rho$.\par
The action $\rho_\pi$ has the following property
\begin{lem}
\label{fixedunit}
For every $S\in\SS(\pi)$, $SS^*$ is a fixed point of $\rho_\pi$. Conversely, for any fixed point $T$ of $\rho_\pi$ one has $\sqrt T\in\SS(\pi)$. Moreover, every unitarisable $\pi$ has a positive unitariser.
\end{lem}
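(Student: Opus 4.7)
The plan is to prove all three assertions by direct algebraic manipulation; the only non-elementary input will be the continuous functional calculus needed to extract the positive square root in the second part, and no serious obstacle is expected.

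For the first assertion, I will take $S\in\Ss(\pi)$ and rewrite the unitarity relation $(S\pi(\gamma)S^{-1})(S\pi(\gamma)S^{-1})^{*}=I$ by expanding the adjoint and pushing $S$ and $S^{*}$ to the outside; reading off the resulting identity gives $\pi(\gamma)(SS^{*})\pi(\gamma)^{*}=SS^{*}$, so $SS^{*}$ is a fixed point of $\rho_\pi$. The one fiddly bookkeeping issue is matching the \emph{side} convention, since the definition in the introduction unitarises by $S\pi(\gamma)S^{-1}$ while $\rho_\pi$ sandwiches by $\pi(\gamma)\,\cdot\,\pi(\gamma)^{*}$; I will verify at the outset that the conventions line up on $SS^{*}$ rather than on $S^{*}S$ so that the equation above really is a fixed-point equation for the action $\rho_\pi$.

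For the converse, given a fixed point $T\in\Pp(H)$, the positivity and invertibility of $T$ supply a unique positive, invertible square root $\sqrt T$ via continuous functional calculus, and $\sqrt T^{\,-1}=T^{-1/2}$ is again in $\Pp(H)$. The fixed-point equation $\pi(\gamma)\,T\,\pi(\gamma)^{*}=T$ can then be reshaped as
\[
\bigl(T^{-1/2}\pi(\gamma)T^{1/2}\bigr)\bigl(T^{-1/2}\pi(\gamma)T^{1/2}\bigr)^{*}=T^{-1/2}\pi(\gamma)\,T\,\pi(\gamma)^{*}T^{-1/2}=T^{-1/2}TT^{-1/2}=I,
\]
so $T^{-1/2}\pi(\gamma)T^{1/2}$ is an invertible co-isometry and therefore unitary for every $\gamma\in\Gamma$. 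This will identify $\sqrt T$ as a unitariser and place it in $\Ss(\pi)$.

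The \emph{moreover} clause then follows by composing the two steps: starting from an arbitrary $S\in\Ss(\pi)$ supplied by unitarisability, the first step produces a positive invertible fixed point $T=SS^{*}\in\Pp(H)$, and the second step yields the positive unitariser $\sqrt{SS^{*}}\in\Ss(\pi)$. The only real point of attention throughout is the careful handling of adjoints versus inverses; this is the one place where, if anywhere, a convention could flip, so I will double-check the direction of each manipulation on the spot before committing to it.
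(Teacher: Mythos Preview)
Your proposal is correct and follows essentially the same route as the paper: a direct algebraic verification for each direction, followed by composing the two to obtain a positive unitariser $\sqrt{SS^*}$. The paper's proof in fact silently uses the convention $S^{-1}\pi(\gamma)S\in U(H)$ (rather than the introduction's $S\pi(\gamma)S^{-1}$), under which the computation lands on $SS^*$ exactly as stated; your instinct to double-check the side convention is well placed, since with the introduction's convention the fixed point that drops out is $(S^*S)^{-1}$ rather than $SS^*$.
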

\begin{proof}\frei\\
This is a straightforward calculation:
\begin{align*}
S\in\Ss&\Leftrightarrow S^{-1}\pi(\gamma)S\in U(H)~\forall \gamma\in \Gamma\\
&\Leftrightarrow~\id_H=\left(S^{-1}\pi(\gamma)S\right)\left(S^{-1}\pi(\gamma)S\right)^*=
S^{-1}\pi(\gamma)SS^*\pi(\gamma)^*\left(S^{-1}\right)^*&\forall \gamma\in \Gamma\\
&\Rightarrow SS^*=\pi(\gamma)SS^*\pi(\gamma)^*=\rho_\pi(\gamma,SS^*)&\forall \gamma\in \Gamma
\end{align*}
and conversely, for any $\gamma\in\Gamma$
\begin{align*}
&T=\rho_\pi(\gamma,T)&\Rightarrow&\sqrt T^2=T=\pi(\gamma)T\pi(\gamma)^*\\
&&\Rightarrow&\id_H=\sqrt T^{-1}\pi(\gamma)T\pi(\gamma)^*\left(\sqrt T^*\right)^{-1}=\sqrt T^{-1}\pi(\gamma)\sqrt T\left(\sqrt T^{-1}\pi(\gamma)\sqrt T\right)^*\\
&&\Rightarrow&\sqrt T\in\SS(\pi)
\end{align*}
For the second part of the claim, let $S$ unitarise a representation $\pi$. Then, by the above calculation,$\sqrt{SS^*}\in\Pp(H)$ also does.
\end{proof}
The following lemma, which will be helpful in proving Theorem \ref{eins}, states that for any unitarisable representation, we can find a \textbf{smallest unitariser}.
\begin{lem}\label{infgleichmin}
Let $\pi$ be a unitarisable representation. Then $\inf\limits_{S\in\SS(\pi)}s(S)=\min\limits_{S\in\SS(\pi)}s(S)$.\\
Moreover, this smallest unitariser may be chosen to lie in $\Pp(H)$.
\end{lem}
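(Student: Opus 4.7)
The plan is to first reduce to the problem of minimizing $s(\cdot)$ over fixed points of $\rho_\pi$, and then to extract a minimizer by a weak compactness argument on the unit ball of $B(H)$.

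For the reduction, recall from Lemma \ref{fixedunit} that for any $S\in\SS(\pi)$ the operator $\sqrt{SS^*}\in\Pp(H)$ is again a unitariser. A direct computation shows $s(\sqrt{SS^*})=\sqrt{s(SS^*)}=s(S)$, since $\|SS^*\|=\|S\|^2$ and $\|(SS^*)^{-1}\|=\|S^{-1}\|^2$. Hence $\inf_{S\in\SS(\pi)}s(S)=\inf_{S\in\SS(\pi)\cap\Pp(H)}s(S)$, and via $T\leftrightarrow\sqrt T$ this equals $\sqrt{\inf_{T\in F}s(T)}$, where $F\subset\Pp(H)$ is the fixed point set of $\rho_\pi$. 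So it suffices to produce a minimizer of $s(\cdot)$ on $F$.

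Let $\mu^2:=\inf_{T\in F}s(T)$ and pick a minimizing sequence $T_n\in F$. Since $F$ is invariant under scaling by positive reals and $s(\cdot)$ is scale invariant, I may normalize $\|T_n\|=1$, whence $\|T_n^{-1}\|=s(T_n)\to\mu^2$. This means $T_n\geq s(T_n)^{-1}\id_H$ uniformly. Because $H$ is separable, the unit ball of $B(H)$ is metrizable and compact in the weak operator topology, so I extract a WOT-convergent subsequence $T_n\to T$.

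It remains to check that $T$ has the required properties. Positivity $T\geq0$ and the fixed point condition $\gamma T\gamma^*=T$ both pass to WOT limits (using continuity of $A\mapsto\langle A\gamma^*x,\gamma^*y\rangle$). The uniform lower bound yields $T\geq\mu^{-2}\id_H$, hence $T$ is invertible with $\|T^{-1}\|\leq\mu^2$; together with $\|T\|\leq\liminf\|T_n\|=1$ this gives $s(T)\leq\mu^2$. So $T\in F$ achieves the infimum, and $\sqrt T\in\SS(\pi)\cap\Pp(H)$ is the required positive smallest unitariser.

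The main technical obstacle is preservation of invertibility under weak operator limits, which is false in general; the crucial input making it work here is that a minimizing sequence automatically comes with a uniform bound on $\|T_n^{-1}\|$, providing the uniform lower bound $T_n\geq c\,\id_H$ that does survive WOT limits. Everything else is routine verification of semicontinuity and equivariance properties.
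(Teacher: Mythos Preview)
Your proof is correct and follows essentially the same route as the paper: reduce via $S\mapsto\sqrt{SS^*}$ to positive unitarisers and thence to the fixed-point set, normalize to $\|T_n\|=1$ using scale invariance, and extract a WOT-limit of a minimizing sequence in the unit ball. The paper phrases the normalized minimization as a distance problem (finding the closest point in $\Pp(H)^\Gamma_1$ to $\id_H$) and leaves the verification that the WOT limit has the required properties as ``easily shown'', whereas you spell out explicitly the uniform spectral lower bound that guarantees invertibility of the limit---a point worth making explicit, since it is exactly what can fail for arbitrary WOT limits in $\Pp(H)$.
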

\begin{proof}\frei\\
Using Lemma \ref{fixedunit}, the mapping $S\mapsto\sqrt{SS^*}$ maps elements in $\Ss(\pi)$ to $\Ss(\pi)\cap\Pp(H)$ of same size. Hence it suffices to consider positive unitarisers.\par
Now, by Lemma \ref{fixedunit}, there is a size-squaring bijection
$S\mapsto S^{2}$ between $\Ss(\pi)\cap\Pp(H)$ and the set of fixed points of $\rho_\pi$. \par
Therefore, the claim is equivalent to predicting the existence of some operator $T$ in the convex and norm-closed set $\Pp(H)^\Gamma$ of fixed points for $\rho_\pi$, which minimize the size.\\
Finally, for any $T\in\Pp(H)^\Gamma$ and $\gamma \in \Gamma$ 
\begin{align*}
  \rho_\pi(\gamma ,\lambda T)=\gamma\left(\lambda T\right)\gamma^*=\lambda\cdot\gamma T\gamma^*=\lambda T~\forall\lambda\in\R_+
\end{align*}
showing that $\Pp(H)^\Gamma$ is closed under multiplication with positive scalars, which preserves both, size and positivity.\par
Define $\Pp(H)^\Gamma_1$ to be the set $\Pp(H)^\Gamma\cap\{A:\Vert A\Vert=1\}\subset\Pp(H)^\Gamma$ of fixed points of norm 1. We have reduced the claim to $\inf\left\{s(T),~T\in\Pp(H)^\Gamma_1\right\}=\min\left\{s(T),~T\in\Pp(H)^\Gamma_1\right\}$.\par
As for such $T$, one has 
$s(T)=\Vert T^{-1}\Vert=(\min\sigma(T))^{-1}=(1-\Vert\id_H-T\Vert)^{-1}$, searching a $T\in\Pp(H)^\Gamma_1$ of minimal size means looking for an operator that realizes the linear distance $\kappa$ from $\id_H$ to $\Pp(H)^\Gamma_1$. \par
Let $(T_i)_{i\in\N}$ be a sequence in $\Pp(H)_1^\Gamma$, realizing this distance. Then it is obvious, that $\Pp(H)^\Gamma_1\subset\{A\in B(H):~\Vert A\Vert\leq 1\}$ and this is compact with respect to the weak operator topology. Now, a limit of a weak operator convergent subsequence is easily shown to have the demanded properties.
\end{proof}\pagebreak
\begin{defn}
  For a unitarisable representation $\pi$ of $\Gamma$, some $t\in[0,1]$ and a chosen smallest and positive unitariser $S$ of $\pi$, we define $\pi_t$ by $\pi_t:\gamma \mapsto S^{-t}\pi(\gamma )S^t$.
\end{defn}
\begin{lem}\label{interpol}
Let $\pi$ be unitarisable and $S$ a smallest and positive unitariser. Then $S^{1-t}$ is a smallest unitariser for $\pi_t$.
  \end{lem}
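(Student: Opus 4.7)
The plan is to verify that $S^{1-t}$ is a unitariser for $\pi_t$ by direct calculation, then compare its size against that of an arbitrary unitariser of $\pi_t$ using the minimality of $S$ for $\pi$.

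First I would check that $S^{1-t}$ actually unitarises $\pi_t$. Writing out $(S^{1-t})^{-1}\pi_t(\gamma)S^{1-t}=S^{-(1-t)}S^{-t}\pi(\gamma)S^tS^{1-t}=S^{-1}\pi(\gamma)S$, which is unitary by hypothesis on $S$. So $S^{1-t}\in\Ss(\pi_t)$. Next I would compute its size: since $S\in\Pp(H)$, the continuous functional calculus gives $\Vert S^{1-t}\Vert=\Vert S\Vert^{1-t}$ and $\Vert S^{-(1-t)}\Vert=\Vert S^{-1}\Vert^{1-t}$, hence $s(S^{1-t})=s(S)^{1-t}$.

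Now the main step is the lower bound. Given any $T\in\Ss(\pi_t)$, I would observe that $S^tT$ unitarises $\pi$: indeed
\begin{align*}
(S^tT)^{-1}\pi(\gamma)(S^tT)=T^{-1}S^{-t}\pi(\gamma)S^tT=T^{-1}\pi_t(\gamma)T,
\end{align*}
which is unitary. By minimality of $S$ for $\pi$, we get $s(S)\leq s(S^tT)$. Submultiplicativity of the operator norm then gives $s(S^tT)\leq\Vert S^t\Vert\Vert T\Vert\Vert T^{-1}\Vert\Vert S^{-t}\Vert=s(S)^t\cdot s(T)$, so combining the two inequalities yields $s(T)\geq s(S)^{1-t}=s(S^{1-t})$.

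Since $S^{1-t}$ belongs to $\Ss(\pi_t)$ and attains this lower bound, it is a smallest unitariser. The calculation is essentially routine once one has the identification $S^tT\in\Ss(\pi)$; the only subtle point is the use of positivity of $S$ to identify $\Vert S^a\Vert$ with $\Vert S\Vert^a$ for real exponents, which requires $S\in\Pp(H)$ as in the hypothesis.
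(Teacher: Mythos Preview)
Your proof is correct and uses the same key observation as the paper: that if $T$ unitarises $\pi_t$ then $S^tT$ unitarises $\pi$, so minimality of $S$ together with submultiplicativity of the size forces $s(T)\geq s(S)^{1-t}$. The paper argues by contradiction after normalising to $\Vert S\Vert=1$, whereas you give the direct inequality without normalisation; this is a cosmetic difference only, and your version is arguably cleaner.
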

\begin{proof}\frei\\
Obviously, the definition of $\pi_t$ is the same for $S$ replaced by  $\lambda S$ for some positive $\lambda$ and without loss of generality, we may assume $\Vert S\Vert=1$. Assume for contradiction the existence of some $Q\in\SS(\pi_t)$ with $\Vert Q\Vert=1$, such that $\Vert Q^{-1}\Vert=s(Q)\leq s(S^{1-t})=\Vert S^{t-1}\Vert$.\par
Then $S^tQ$ obviously unitarises $\pi_t$ and
\begin{align*}
s(S^tQ)&=\left\Vert S^tQ\right\Vert\cdot\left\Vert\left(S^tQ\right)^{-1}\right\Vert\leq\left\Vert S^t\right\Vert\cdot\left\Vert S^{-t}\right\Vert\cdot\left\Vert Q\right\Vert\cdot\left\Vert Q^{-1}\right\Vert<\left\Vert S^{-1}\right\Vert^t\cdot\left\Vert S^{-(1-t)}\right\Vert=s(S) 
\end{align*}
contradicting $S$ to be the smallest unitariser.
\end{proof}
\begin{cor}\label{similar}
 If $\alpha$ is the size of the smallest unitariser of $\pi$, then the smallest unitariser of $\pi_t$ has size $\alpha^{1-t}$.
\end{cor}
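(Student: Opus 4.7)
The corollary is essentially a direct computation on top of Lemma \ref{interpol}. My plan is the following.

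First, I would invoke Lemma \ref{infgleichmin} to choose a smallest unitariser $S$ of $\pi$ which lies in $\Pp(H)$, so that $s(S)=\alpha$. By Lemma \ref{interpol}, $S^{1-t}$ is then a smallest unitariser for $\pi_t$, which reduces the problem to computing $s(S^{1-t})$ in terms of $s(S)$.

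Second, since $S$ is a positive invertible operator, its fractional powers are defined by the continuous functional calculus, and for any real $r\geq 0$ one has $\Vert S^r\Vert=\Vert S\Vert^r$ and $\Vert S^{-r}\Vert=\Vert S^{-1}\Vert^r$ (this follows from the spectral radius being attained on $\sigma(S)\subset\R_{>0}$). Applying this to $r=1-t\in[0,1]$ gives
\begin{align*}
s(S^{1-t})=\Vert S^{1-t}\Vert\cdot\Vert S^{-(1-t)}\Vert=\Vert S\Vert^{1-t}\cdot\Vert S^{-1}\Vert^{1-t}=s(S)^{1-t}=\alpha^{1-t},
\end{align*}
which is the claim.

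There is essentially no hard step here; the only point to be careful about is that the identity $\Vert S^r\Vert=\Vert S\Vert^r$ requires positivity of $S$, which is exactly why the "positive unitariser" part of Lemmas \ref{fixedunit} and \ref{infgleichmin} is invoked before applying Lemma \ref{interpol}. For a general (non-positive) unitariser this manipulation would fail, but the reduction to positive unitarisers has already been carried out.
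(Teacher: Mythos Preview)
Your proof is correct and follows exactly the approach the paper intends: it simply says the claim is immediate from Lemma \ref{interpol}, and you have spelled out the implicit computation $s(S^{1-t}) = \Vert S\Vert^{1-t}\Vert S^{-1}\Vert^{1-t} = s(S)^{1-t}$ using the positivity of $S$.
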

\begin{proof}\frei\\
This is immediate from the previous lemma.
\end{proof}
\subsection{A metric structure on $\Pp(H)$}
The space $\Pp(H)$ carries a metric structure by
\begin{align*}
d(x,y)=\left\Vert\ln\left(x^{-\frac12}yx^{-\frac12}\right)\right\Vert
\end{align*}
This metric is sometimes called the \textbf{Thompson metric}.\par
The set $\{\phi_a:x\mapsto axa^*,a\in B(H)~\textrm{invertible}\}$ is a transitive subgroup of the group of isometries for this metric. In fact, every isometry of $\Pp(H)$ is of the form $x\mapsto ax^\epsilon a^*$ for some (conjugate-)linear $a$ and $\epsilon\in\{\pm1\}$. One may be referred to \cite{moln}, Theorem 2 and  \cite{thomp} for proofs of those facts.\par
Also, for any two points $x,y\in\Pp(H)$, there is a geodesic (i.e. a continuous curve of length $d(x,y)$ connecting $x$ and $y$) between them. They are given by
\begin{align*}
\eta(x,y,\cdot):I\to\Pp(H),~\eta(x,y,t)=x^{\frac12}\left(x^{-\frac12}yx^{-\frac12}\right)^tx^{\frac12}
\end{align*}
Those geodesics can easily be seen to be mapped to one-another by the maps $\phi_a$. In particular, for a representation $\pi$, $\rho_\pi$ is an action of isometries respecting those geodesics: \begin{align}\label{equiv}\rho_\pi(\gamma)\cdot\eta(x,y,t)=\eta(\rho_\pi(\gamma)x,\rho_\pi(\gamma)y,t)~\forall x,y\in\Pp(H),\forall \gamma\in\Gamma,~\forall t\in I\end{align}
See \cite{schlicht} for details.
\begin{thm}[\cite{correcht}]\label{respect}
The metric $d$ on $\Pp(H)$ is convex with respect to $\eta(x,y,\cdot)$:
\begin{align}\label{gammaconv}
d\left(\eta\left(x_1,x_2,t\right),\eta\left(y_1,y_2,t\right)\right)\leq td(x_2,y_2)+(1-t)d(x_1,y_1)~\forall t\in[0,1]~\forall x_i,y_i\in\Pp(H)
\end{align}
In particular, metric balls are metrically convex.
\end{thm}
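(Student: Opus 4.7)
The plan is to reduce the full convexity inequality (\ref{gammaconv}) to the one-parameter master inequality
$$d(a^s, b^s) \leq s \cdot d(a, b), \qquad s \in [0,1], \ a,b \in \Pp(H), \qquad (\star)$$
and then obtain the convexity of balls as an immediate corollary.

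Step 1 (splitting). Insert $\eta(x_1, y_2, t)$ via the triangle inequality to bound the left-hand side of (\ref{gammaconv}) by
$$d(\eta(x_1,x_2,t), \eta(x_1,y_2,t)) + d(\eta(x_1,y_2,t), \eta(y_1,y_2,t)).$$
For the first summand, apply the isometry $\phi_{x_1^{-1/2}}$, which respects the geodesics $\eta$. A direct computation from the definition gives $\eta(\id, u, t) = u^t$, so the summand becomes $d(\tilde x_2^t, \tilde y_2^t)$ with $\tilde x_2 = x_1^{-1/2} x_2 x_1^{-1/2}$ and $\tilde y_2 = x_1^{-1/2} y_2 x_1^{-1/2}$. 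Applying $(\star)$ and using isometry invariance once more bounds it by $t \cdot d(x_2, y_2)$. Symmetrically, applying $\phi_{y_2^{-1/2}}$ to the second summand and noting that $\eta(v, \id, t) = v^{1-t}$ reduces it to $d(\tilde x_1^{1-t}, \tilde y_1^{1-t})$, which $(\star)$ with exponent $1-t$ bounds by $(1-t) \cdot d(x_1, y_1)$.

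Step 2 (proof of $(\star)$). Let $C = d(a, b) = \|\ln(a^{-1/2} b a^{-1/2})\|$. The continuous functional calculus yields the equivalence $\|\ln X\| \leq C \Leftrightarrow e^{-C} \id \leq X \leq e^C \id$ for positive invertible $X$, so conjugation by $a^{1/2}$ translates the bound $C$ into the operator-order inequality $e^{-C} a \leq b \leq e^C a$. The Löwner--Heinz inequality, i.e.\ operator monotonicity of $x \mapsto x^s$ for $s \in [0,1]$, then gives $e^{-sC} a^s \leq b^s \leq e^{sC} a^s$; conjugating by $a^{-s/2}$ and translating back via the same spectral equivalence produces $d(a^s, b^s) \leq sC$, which is $(\star)$.

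Step 3 (convexity of balls). Given $x_1, x_2$ in the closed metric ball of radius $r$ around $z$, apply (\ref{gammaconv}) with $y_1 = y_2 = z$ and the identity $\eta(z, z, t) = z$ to conclude $d(\eta(x_1, x_2, t), z) \leq tr + (1-t) r = r$.

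The main obstacle I expect will be Step 2: while Löwner--Heinz is classical, it is a nontrivial input, and care is needed to set up the spectral characterization $\|\ln X\| \leq C \Leftrightarrow e^{-C}\id \leq X \leq e^C \id$ rigorously via the continuous functional calculus for an invertible positive operator on an infinite-dimensional Hilbert space, as well as to verify that all the conjugation steps preserve the operator order.
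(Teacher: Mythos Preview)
Your argument is correct. Note, however, that the paper does not supply its own proof of this theorem: it is quoted from \cite{correcht} (Corach--Porta--Recht), so there is nothing in the present paper to compare against directly. That said, your route is precisely the one taken in \cite{correcht}: reduce by an isometry $\phi_{a}$ to geodesics issuing from $\id_H$, so that the statement becomes the power inequality $d(a^s,b^s)\leq s\,d(a,b)$, and then derive the latter from the L\"owner--Heinz theorem via the spectral equivalence $\Vert\ln X\Vert\leq C\Leftrightarrow e^{-C}\id_H\leq X\leq e^C\id_H$. Your concern about Step~2 is well placed but not a gap: conjugation by an invertible positive operator preserves the operator order, and the functional-calculus characterisation you use is standard for invertible positive $X$ on any Hilbert space. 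The triangle-inequality splitting in Step~1 with the intermediate point $\eta(x_1,y_2,t)$ is exactly the device that makes the reduction work, and Step~3 is the immediate specialisation $y_1=y_2=z$.
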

\begin{rem}
The geodesics given above are not unique as "metric geodesics". The space $\Pp(H)$ can though be given the structure of a Finsler manifold and in this differential geometric set up, they are unique (as self-parallel curves). See \cite{correcht}, for example. This motivates the following definition:
\end{rem}
\begin{defn}
A subset $A\subset X$ of a metric space with chosen geodesics $\eta(x,y,\cdot)$ between any two points $x,y\in X$ is called \textbf{metrically convex} (or just convex, if no other term of convexity applies), if for any $x,y\in A$ one has $\eta(x,y,t)\in A~\forall t\in[0,1]$.\end{defn}

\begin{lem}
 For any action of a group $\Gamma$ on $\Pp(H)$ respecting the geodesics, the fixed point set $\PP(H)^\Gamma$ is convex.
\end{lem}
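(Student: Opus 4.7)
The plan is to unwind definitions directly: given two fixed points $x,y\in\PP(H)^\Gamma$, I want to show that each point $\eta(x,y,t)$ on the geodesic connecting them is also fixed by every $\gamma\in\Gamma$. Since $\Gamma$ acts by maps that respect the chosen geodesics (the analogue of equation (\ref{equiv}) for the abstract action $\rho$), for any $\gamma\in\Gamma$ and $t\in[0,1]$ I can compute
\begin{align*}
\rho(\gamma)\cdot\eta(x,y,t)=\eta(\rho(\gamma)x,\rho(\gamma)y,t)=\eta(x,y,t),
\end{align*}
where the first equality is the geodesic-respecting hypothesis and the second uses $\rho(\gamma)x=x$ and $\rho(\gamma)y=y$. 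This shows $\eta(x,y,t)\in\PP(H)^\Gamma$ and hence gives convexity.

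There is essentially no obstacle here: the content of the lemma is exactly the definition of ``respecting the geodesics.'' The only thing to be mildly careful about is that the geodesic $\eta(x,y,\cdot)$ we pick between two fixed points should be one of those respected by the action, which is implicit in the hypothesis of the lemma. Hence no additional machinery (Thompson metric estimates, Theorem \ref{respect}, or any compactness) is required for this statement.
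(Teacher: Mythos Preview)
Your proof is correct and is essentially identical to the paper's own argument: the paper also takes $x,y\in\PP(H)^\Gamma$, invokes equation (\ref{equiv}) to write $\gamma\cdot\eta(x,y,t)=\eta(\gamma x,\gamma y,t)=\eta(x,y,t)$, and concludes. There is nothing to add.
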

\begin{proof}\frei\\
  For $x,y\in\PP(H)^\Gamma$ and $\gamma\in\Gamma$ we have $\gamma\cdot\eta(x,y,t))\stackrel{(\ref{equiv})}=\eta(\gamma\cdot x,\gamma\cdot y,t)=\eta(x,y,t)$.
\end{proof}
For a uniformly bounded representation $\pi$ of some $\Gamma$ and any $\gamma\in\Gamma$, one obviously has $\vert\pi\vert^2\geq\Vert\pi(\gamma )\Vert^2=\Vert\pi(\gamma )\pi(\gamma )^*\Vert$ and analogously $\vert\pi\vert^2\geq\left\Vert\left(\pi(\gamma )\pi(\gamma )^*\right)^{-1}\right\Vert$.\par
Hence, the $\Gamma$-orbit $\rho_\pi(\Gamma,\id_H)$ of $\id_H\in\Pp(H)$ under $\rho_\pi$ (and thus its closed convex hull) is bounded:
\begin{align*}d(\rho_\pi(\gamma ,\id_H),\id_H)&=\Vert\ln(\pi(\gamma )\pi(\gamma ^*))\Vert\\&=\max\left\{\ln\left(\left\Vert(\pi(\gamma )\pi(\gamma ^*))\right\Vert\right),\ln\left(\left\Vert(\pi(\gamma )\pi(\gamma ^*))\right\Vert^{-1}\right)\right\}\geq\ln\left(\vert\pi\vert^2\right)\end{align*}
This motivates the following definition.
\begin{defn}
 For a uniformly bounded representation $\pi$ of a group $\Gamma$, one defines
\begin{align*}
 \diam(\pi):=\sup\limits_{\gamma_1,\gamma_2\in \Gamma}d(\rho_\pi(\gamma_2,\id_H),\rho_\pi(\gamma_1,\id_H))=\diam(\rho_\pi(\Gamma,\id_H))
\end{align*}
to be the \textbf{diameter of $\pi$}.\par
Since $\rho_\pi$ is an action of isometries on $\Pp(H)$, this coincides with $\sup\limits_{\gamma \in \Gamma}d(\id_H,\rho_\pi(\gamma ,\id_H))$.
\end{defn}
\begin{lem}\label{diamvert}
  For a uniformly bounded representation $\pi$, one has $\diam(\pi)=2\ln\vert\pi\vert$.
\end{lem}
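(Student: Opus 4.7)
The plan is to compute the quantity $d(\id_H,\rho_\pi(\gamma,\id_H))$ explicitly in terms of the operator norm of $\pi(\gamma)$ and then take the supremum over $\Gamma$.

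First I would expand the definition: since $\rho_\pi(\gamma,\id_H)=\pi(\gamma)\pi(\gamma)^*$, the Thompson distance from $\id_H$ is
\begin{align*}
d(\id_H,\rho_\pi(\gamma,\id_H))=\bigl\Vert\ln(\pi(\gamma)\pi(\gamma)^*)\bigr\Vert.
\end{align*}
Because $\pi(\gamma)\pi(\gamma)^*$ is a positive invertible operator, its logarithm is self-adjoint with spectrum $\ln(\sigma(\pi(\gamma)\pi(\gamma)^*))$, so the operator norm equals $\max\{\ln\Vert\pi(\gamma)\pi(\gamma)^*\Vert,\ln\Vert(\pi(\gamma)\pi(\gamma)^*)^{-1}\Vert\}$, as already used in the paragraph preceding the statement.

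Next I would apply the $C^*$-identity twice: $\Vert\pi(\gamma)\pi(\gamma)^*\Vert=\Vert\pi(\gamma)\Vert^2$, and $\Vert(\pi(\gamma)\pi(\gamma)^*)^{-1}\Vert=\Vert\pi(\gamma^{-1})^*\pi(\gamma^{-1})\Vert=\Vert\pi(\gamma^{-1})\Vert^2$. Thus
\begin{align*}
d(\id_H,\rho_\pi(\gamma,\id_H))=2\max\bigl\{\ln\Vert\pi(\gamma)\Vert,\ln\Vert\pi(\gamma^{-1})\Vert\bigr\}.
\end{align*}

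Finally I would take the supremum over $\gamma\in\Gamma$. On the one hand every term on the right is at most $2\ln\vert\pi\vert$, giving $\diam(\pi)\leq 2\ln\vert\pi\vert$. On the other hand, since $\vert\pi\vert=\sup_{\gamma\in\Gamma}\Vert\pi(\gamma)\Vert$, we can choose a sequence $\gamma_n$ with $\Vert\pi(\gamma_n)\Vert\to\vert\pi\vert$, which forces the supremum to be at least $2\ln\vert\pi\vert$. The two inequalities combine to give the desired equality. There is no real obstacle here; the only subtlety worth spelling out is that the invariance of $\Gamma$ under inversion makes the $\ln\Vert\pi(\gamma^{-1})\Vert$ term contribute nothing beyond what $\ln\Vert\pi(\gamma)\Vert$ already contributes when taking the supremum.
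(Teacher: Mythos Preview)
Your proof is correct and follows essentially the same route as the paper's: expand the Thompson distance, use the $C^*$-identity to rewrite the two spectral extremes as $\Vert\pi(\gamma)\Vert^2$ and $\Vert\pi(\gamma^{-1})\Vert^2$, and then absorb the inverse term into the supremum since $\Gamma$ is closed under inversion. The paper compresses the final step into a single equality, whereas you spell out the two inequalities explicitly, but the argument is the same.
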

\begin{proof}\frei\\
  One calculates
\begin{align*}
  \diam(\pi)&=\sup\limits_{\gamma\in \Gamma}d\left(\id_H,\rho_\pi(\gamma,\id_H)\right)=\sup\limits_{\gamma\in \Gamma}\left\Vert\ln\left(\pi(\gamma)\pi(\gamma)^*\right)\right\Vert\\&=\sup\limits_{\gamma\in \Gamma}\max\left\{\ln\left(\left\Vert\pi(\gamma)\pi(\gamma)^*\right\Vert\right),\ln\left(\left\Vert\left(\pi(\gamma)\pi(\gamma)^*\right)^{-1}\right\Vert\right)\right\}\\
&=2\sup\limits_{\gamma\in \Gamma}\ln\left(\left\Vert\pi(\gamma)\right\Vert\right)=2\ln\vert\pi\vert
\end{align*}
\end{proof}
\begin{lem}\label{growing}  For $\pi_t$ as in Lemma \ref{interpol}, $\vert\pi_t\vert\leq\vert\pi\vert^{1-t}~\forall t\in I$.
\end{lem}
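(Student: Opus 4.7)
The plan is to convert the bound on $|\pi_t|$ into a bound on diameters via Lemma \ref{diamvert}, and then exploit the geometry: the orbit of $\id_H$ under $\pi_t$ should be, up to an isometry, the orbit of the point $S^{2t}$ (which lies on the geodesic from $\id_H$ to the fixed point $S^2$) under $\pi$. Convexity of $d$ along the geodesic then shrinks the diameter by a factor $(1-t)$.

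More concretely, first I would rewrite $\rho_{\pi_t}(\gamma,\id_H)=S^{-t}\pi(\gamma)S^{2t}\pi(\gamma)^*S^{-t}=\phi_{S^{-t}}\bigl(\rho_\pi(\gamma,S^{2t})\bigr)$, where $\phi_{S^{-t}}:x\mapsto S^{-t}xS^{-t}$ is an isometry of $(\Pp(H),d)$. Since $\phi_{S^{-t}}(S^{2t})=\id_H$, this isometry identifies the two base points, so
\begin{align*}
d\bigl(\rho_{\pi_t}(\gamma,\id_H),\id_H\bigr)=d\bigl(\rho_\pi(\gamma,S^{2t}),S^{2t}\bigr).
\end{align*}

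Next, observe that $S^{2t}=\eta(\id_H,S^2,t)$, and by the equivariance of geodesics under $\rho_\pi$ (equation (\ref{equiv})) we also have $\rho_\pi(\gamma,S^{2t})=\eta\bigl(\rho_\pi(\gamma,\id_H),\rho_\pi(\gamma,S^2),t\bigr)$. Applying Theorem \ref{respect} to these two geodesics yields
\begin{align*}
d\bigl(\rho_\pi(\gamma,S^{2t}),S^{2t}\bigr)\leq t\cdot d\bigl(\rho_\pi(\gamma,S^2),S^2\bigr)+(1-t)\cdot d\bigl(\rho_\pi(\gamma,\id_H),\id_H\bigr).
\end{align*}
Now the first term on the right vanishes: by Lemma \ref{fixedunit}, $S^2=SS^*$ is a fixed point of $\rho_\pi$, so $\rho_\pi(\gamma,S^2)=S^2$ for every $\gamma$. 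Hence
\begin{align*}
d\bigl(\rho_{\pi_t}(\gamma,\id_H),\id_H\bigr)\leq(1-t)\cdot d\bigl(\rho_\pi(\gamma,\id_H),\id_H\bigr).
\end{align*}

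Taking the supremum over $\gamma\in\Gamma$ gives $\diam(\pi_t)\leq(1-t)\diam(\pi)$, which by Lemma \ref{diamvert} translates into $2\ln|\pi_t|\leq(1-t)\cdot 2\ln|\pi|$, i.e.\ $|\pi_t|\leq|\pi|^{1-t}$. The only delicate point is making sure the base-point translation (via $\phi_{S^{-t}}$) lines up correctly so that the geodesic one projects onto runs between $\id_H$ and the fixed point $S^2$; once that is set, the result is essentially convexity along a geodesic whose far endpoint is fixed, which must shrink distances to the near endpoint linearly in $1-t$.
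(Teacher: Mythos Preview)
Your proof is correct and follows essentially the same route as the paper: translate $|\pi_t|$ into $\diam(\pi_t)$ via Lemma~\ref{diamvert}, use the isometry $\phi_{S^{-t}}$ to identify $d(\rho_{\pi_t}(\gamma,\id_H),\id_H)$ with $d(\rho_\pi(\gamma,S^{2t}),S^{2t})$, recognize $S^{2t}=\eta(\id_H,S^2,t)$, apply equivariance and the convexity inequality~(\ref{gammaconv}), and kill the $t$-term using that $S^2$ is a fixed point by Lemma~\ref{fixedunit}. The paper's argument is the same computation, just written as one chain of equalities and inequalities rather than broken into the intermediate steps you spell out.
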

\begin{proof}\frei\\
 Using the facts from Theorem \ref{respect} and Lemma \ref{fixedunit} for the metric $d$, one calculates
\begin{align*}
2\ln\vert\pi_t\vert&=\diam(\pi_t)=\sup\limits_{\gamma \in \Gamma}d(\id_H,\pi_t(\gamma )\pi_t(\gamma )^*)=\sup\limits_{\gamma \in \Gamma}d\left(\id_H,S^{-t}\pi(\gamma )S^{2t}\pi(\gamma )^*S^{-t}\right)\\&=\sup\limits_{\gamma\in \Gamma}d\left(S^{2t},\gamma S^{2t}\right)=\sup\limits_{\gamma \in \Gamma}d\left(\eta(\id_H,S^2,t),\gamma \eta\left(\id_H,S^2,t\right)\right)\\
&=\sup\limits_{\gamma \in \Gamma}d\left(\eta(\id_H,S^2,t),\eta\left(\gamma \id_H,\gamma S^2\right),t\right)\\
&\stackrel{(\ref{gammaconv})}\leq\sup\limits_{\gamma \in \Gamma}\left((1-t)d(\id_H,\gamma \id_H)+t d(S^2,\gamma S^2)\right)\\
&=\sup\limits_{\gamma \in \Gamma}(1-t)d\left(\id_H,\gamma \id_H\right)=(1-t)\diam(\pi)=2(1-t)\ln\left(\vert\pi\vert\right)=2\ln\left(\vert\pi\vert^{1-t}\right)
\end{align*}
This obviously implies $\ln\vert\pi_t\vert\leq\ln\left(\vert\pi\vert^{1-t}\right)$ and exponentiating both sides yields the claim.
\end{proof}
\begin{lem}\label{pitcont}
 For $\pi_t$ as in Lemma \ref{interpol}, the function $t\mapsto\vert\pi_t\vert$ is continuous
\end{lem}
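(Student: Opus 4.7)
The plan is to reduce continuity of $t\mapsto \vert\pi_t\vert$ to continuity of the curve $t\mapsto S^{2t}$ in the Thompson metric. By Lemma \ref{diamvert}, $\vert\pi_t\vert = \exp(\diam(\pi_t)/2)$, so since $\exp$ is continuous it suffices to prove that $t\mapsto \diam(\pi_t)$ is continuous on $[0,1]$; in fact we will show it is Lipschitz.

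The computation in the proof of Lemma \ref{growing} rewrites the diameter as
\begin{align*}
\diam(\pi_t) \;=\; \sup_{\gamma\in\Gamma} d\bigl(\id_H,\pi_t(\gamma)\pi_t(\gamma)^*\bigr) \;=\; \sup_{\gamma\in\Gamma} d\bigl(S^{2t},\gamma S^{2t}\bigr),
\end{align*}
where the last equality uses that $\phi_{S^t}\colon x\mapsto S^t x S^t$ is an isometry of $(\Pp(H),d)$. Since $\rho_\pi$ also acts by isometries, the triangle inequality gives, uniformly in $\gamma$,
\begin{align*}
\bigl| d(S^{2t},\gamma S^{2t}) - d(S^{2s},\gamma S^{2s}) \bigr| \;\leq\; d(S^{2t},S^{2s}) + d(\gamma S^{2t},\gamma S^{2s}) \;=\; 2\,d(S^{2t},S^{2s}).
\end{align*}
Taking the supremum over $\gamma$ yields $\vert\diam(\pi_t)-\diam(\pi_s)\vert \leq 2\,d(S^{2t},S^{2s})$.

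It remains to evaluate $d(S^{2t},S^{2s})$. As $S$ is positive invertible, all powers $S^r$ are obtained by functional calculus and commute; hence $S^{-t}S^{2s}S^{-t}=S^{2(s-t)}$ and
\begin{align*}
d(S^{2t},S^{2s}) \;=\; \bigl\Vert \ln(S^{2(s-t)})\bigr\Vert \;=\; 2\vert s-t\vert \cdot \Vert \ln S\Vert.
\end{align*}
Combining gives $\vert\diam(\pi_t)-\diam(\pi_s)\vert \leq 4\Vert \ln S\Vert\,\vert s-t\vert$, so $t\mapsto \diam(\pi_t)$ is Lipschitz on $[0,1]$, and composition with $\exp(\cdot/2)$ yields continuity of $t\mapsto \vert\pi_t\vert$.

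There is no real obstacle beyond organising the reduction: the key observation is that isometry of the $\Gamma$-action converts the $\sup_\gamma$ in the definition of $\diam(\pi_t)$ into a term controlled uniformly by the length of the one-parameter "subgroup" $\{S^{2t}\}_{t\in[0,1]}$, and this subgroup has explicit Thompson-length $2\Vert\ln S\Vert$ because $S$ is positive.
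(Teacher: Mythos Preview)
Your proof is correct and is essentially the same argument as the paper's: both rewrite $\diam(\pi_t)$ as $\sup_\gamma d(S^{2t},\gamma S^{2t})$ (the paper writes $S^{2t}$ as $\eta(\id_H,S^2,t)$), apply the four-point inequality $\lvert d(a,d)-d(b,c)\rvert\le d(a,b)+d(c,d)$ together with the fact that $\gamma$ acts isometrically, and compute $d(S^{2t},S^{2s})=2\lvert t-s\rvert\,\Vert\ln S\Vert$ to obtain the Lipschitz bound $4\Vert\ln S\Vert$. The only cosmetic difference is that you phrase the conclusion as a Lipschitz estimate on $\diam(\pi_t)$ followed by composition with $\exp(\cdot/2)$, whereas the paper writes out the corresponding $\epsilon$--$\delta$ statement directly.
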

\begin{proof}\frei\\
 Above we have seen, that $2\ln\vert\pi_t\vert=\sup\limits_{\gamma \in \Gamma}d\left(\eta(\id_H,S^2,t),\gamma \eta\left(\id_H,S^2,t\right)\right)$.
Now, the claim is proven, if the right hand side depends continuously on $t$.\par
This follows easily from the fact, that the family over which we take the supremum, is uniformly equicontinuous: one uses the following easy consequence of the triangle inequality for arbitrary 4 points $a,b,c,d$ in a metric space:
\begin{align}\label{quadrupel}
 \left\vert d(a,d)-d(b,c)\right\vert\leq d(a,b)+d(c,d)
\end{align}
 Now, for $\epsilon>0$, let $\delta=\frac{\epsilon}{4\Vert\ln S\Vert}$ and choose $t,t'\in[0,1]$ such that $\vert t-t'\vert<\delta$.\par
Then, for arbitrary $\gamma \in \Gamma$, one has (as $\Gamma$ acts by isometries)
\begin{align*}
&\left\vert d\left(\eta(\id_H,S^2,t),\gamma \eta\left(\id_H,S^2,t\right)\right)-d\left(\eta(\id_H,S^2,t'),\gamma \eta\left(\id_H,S^2,t'\right)\right)\right\vert\\
&\hspace{0.5cm}\stackrel{(\ref{quadrupel})}\leq d(\eta(\id_H,S^2,t),\eta(\id_H,S^2,t'))+d(\gamma\eta(\id_H,S^2,t),\gamma \eta(\id_H,S^2,t'))\\
&\hspace{0.5cm}=2d(\eta(\id_H,S^2,t),\eta(\id_H,S^2,t'))=2\left\Vert\ln\left(S^{2(t'-t)}\right)\right\Vert=4\vert t'-t\vert\Vert\ln S\Vert<\epsilon
\end{align*}
\end{proof}
Now, we can prove Theorem \ref{eins}:
\begin{thm*}
 For a unitarisable group $\Gamma$, there are universal constants $K(\Gamma)$ and $\alpha(\Gamma)\in\R_+$ depending only on $\Gamma$, such that for every uniformly bounded representation $\pi$ of $\Gamma$ on some Hilbert space $H$ the following holds
\begin{align*}\exists S\in\Ss(\pi):~s(S)\leq K\cdot\vert\pi\vert^\alpha\end{align*}
\end{thm*}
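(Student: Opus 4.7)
The plan is to argue by contradiction, using the interpolation family $\pi_t$ from Lemma \ref{interpol} to renormalize hypothetical counterexamples and then aggregate them into a single uniformly bounded representation of $\Gamma$ that fails to be unitarisable.

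Suppose no such constants exist. Then for every $n\in\N$ there is a uniformly bounded representation $\pi^{(n)}$ of $\Gamma$ on some Hilbert space $H^{(n)}$ whose smallest unitariser has size $\alpha_n>n\cdot|\pi^{(n)}|^n$. Discarding finitely many initial indices one can assume $|\pi^{(n)}|>1$, since $|\pi^{(n)}|=1$ forces $\pi^{(n)}$ to be unitary, hence $\alpha_n=1$. I would first renormalize the sizes by applying Lemma \ref{interpol}: Corollary \ref{similar} gives that the smallest unitariser of $\pi^{(n)}_t$ has size $\alpha_n^{1-t}$, Lemma \ref{growing} gives $|\pi^{(n)}_t|\leq|\pi^{(n)}|^{1-t}$, and Lemma \ref{pitcont} together with the boundary values $|\pi^{(n)}_0|=|\pi^{(n)}|$ and $|\pi^{(n)}_1|=1$ allows one to choose via the intermediate value theorem some $t_n\in[0,1]$ with $|\pi^{(n)}_{t_n}|=2$ whenever $|\pi^{(n)}|\geq 2$, and $t_n:=0$ otherwise. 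Set $\tilde\pi^{(n)}:=\pi^{(n)}_{t_n}$; then $|\tilde\pi^{(n)}|\leq 2$ in all cases. A short log estimate from $\alpha_n>n|\pi^{(n)}|^n$ and $(1-t_n)\log|\pi^{(n)}|\geq\log 2$ in the interpolated case shows that the smallest-unitariser size $\tilde\alpha_n$ of $\tilde\pi^{(n)}$ satisfies $\tilde\alpha_n>2^n$ in the interpolated case and $\tilde\alpha_n>n$ otherwise, so $\tilde\alpha_n\to\infty$.

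Now consider the direct sum $\tilde\pi:=\bigoplus_n\tilde\pi^{(n)}$ on the separable Hilbert space $H:=\bigoplus_n H^{(n)}$: since $\|\tilde\pi(\gamma)\|=\sup_n\|\tilde\pi^{(n)}(\gamma)\|\leq 2$, $\tilde\pi$ is uniformly bounded. By unitarisability of $\Gamma$ and Lemma \ref{fixedunit}, $\rho_{\tilde\pi}$ admits a fixed point $T\in\Pp(H)$ with $\sqrt T\in\SS(\tilde\pi)$ of some size $s_0$. Writing $T$ in block form $(T_{mn})$ with respect to the direct sum decomposition, the fixed-point equation $\tilde\pi(\gamma)T\tilde\pi(\gamma)^*=T$ yields on each diagonal block $\tilde\pi^{(n)}(\gamma)T_{nn}\tilde\pi^{(n)}(\gamma)^*=T_{nn}$, so $T_{nn}$ is a fixed point for $\rho_{\tilde\pi^{(n)}}$. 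Restricting the operator inequality $\|T^{-1}\|^{-1}\id_H\leq T\leq\|T\|\id_H$ to $H^{(n)}$ gives $\|T^{-1}\|^{-1}\id_{H^{(n)}}\leq T_{nn}\leq\|T\|\id_{H^{(n)}}$, so $T_{nn}\in\Pp(H^{(n)})$ with $s(T_{nn})\leq s(T)=s_0^2$. Hence $\sqrt{T_{nn}}$ unitarises $\tilde\pi^{(n)}$ with size $\leq s_0$ uniformly in $n$, contradicting $\tilde\alpha_n\to\infty$.

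The main obstacle is the final direct-sum step: one must extract unitarisers of each summand whose sizes are bounded independently of $n$ from a single unitariser of $\tilde\pi$. Since inverses of diagonal blocks of $T$ are not the diagonal blocks of $T^{-1}$, the inequality $s(T_{nn})\leq s(T)$ has to be obtained via the two-sided operator inequality for $T$ rather than by inverting $T_{nn}$ directly. The rest of the argument is bookkeeping around the interpolation $\pi_t$ and the three lemmas \ref{similar}, \ref{growing}, \ref{pitcont} that control its behaviour.
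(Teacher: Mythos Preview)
Your proof is correct and follows essentially the same strategy as the paper: assume the contrary, use the interpolation $\pi_t$ together with Lemmas \ref{interpol}--\ref{pitcont} and Corollary \ref{similar} to renormalise so that all $|\tilde\pi^{(n)}|\leq 2$ while the smallest-unitariser sizes diverge, then pass to the direct sum and derive a contradiction from a single global unitariser. The only minor variation is in the final extraction step: you take diagonal blocks $T_{nn}$ of the global fixed point $T$ and control their sizes via the two-sided operator inequality $\|T^{-1}\|^{-1}\id\leq T\leq\|T\|\id$, whereas the paper restricts a global unitariser $S$ to each $H_n$ via a unitary identification $SH_n\cong H_n$; the two are equivalent, and your version is arguably the cleaner of the two.
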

\begin{proof}\frei\\
 We assume for contradiction that this is not the case.\par
So, choosing $K=\alpha=n\in\N$ yields uniformly bounded representations $\pi_n:=\pi_{n,n}$ of $\Gamma$ on Hilbert spaces $H_n$ with smallest unitarisers $S_n:=S_{n,n}$, such that $s(S_n)> n\vert\pi_n\vert^n$.\par
 In order to find a contradiction, we would like to consider the direct sum of those representations. Of course, this does not have to be uniformly bounded, as the sequence $\left(\vert\pi_n\vert\right)_{n\in\N}$ has no reason to be bounded from above.\par
For a given $\pi_n$ such that $\vert\pi_n\vert>2$ and in the flavour of Lemma \ref{growing}, we define \begin{align*}\pi_{n,t}(\gamma ):=S_n^{-t}\pi_n(\gamma )S_n^{t}\end{align*}
By Lemma \ref{growing} and Lemma \ref{pitcont}, we can then find a $0<t<1$ yielding $2=\vert\pi_{n,t}\vert\leq\vert\pi_n\vert^{1-t}$ and the corresponding smallest unitariser $S_{n,t}=S_n^{1-t}$ of $\pi_{n,t}$ fullfills by Corollary \ref{similar} and Lemma \ref{growing}
\begin{align*}
s(S_{n,t})&=s(S_n)^{1-t}>(n\vert\pi_n\vert^n)^{1-t}\geq n^{1-t}\vert\pi_{n,t}\vert^n> 2^n> n
\end{align*}
As the size of every representation is at least $1$, we also have for all those $\pi_n$ with $\vert\pi_n\vert\leq2$
\begin{align*}
 s(S_n)> n\vert\pi_n\vert^n\geq n
\end{align*}
This way, we get a sequence $(\pi_n:\Gamma\to\Aut(H_n))_{n\in\N}$ of uniformly bounded representations of $\Gamma$, such that for any $n\in\N$ $\vert\pi_n\vert\leq 2$ and $s(S_n)> n$ hold.\par
Now, let $\pi=\bigoplus\limits_{n\in\N}\pi_n$. By taking suprema over all $\gamma \in \Gamma$, we get $\vert\pi\vert=\sup\limits_{n\in\N}\left\vert\pi_n\right\vert\leq2$ and $\pi$ is itself a uniformly bounded representation of $\Gamma$. In particular we find a bounded $S$ unitarising $\pi$. 
\par
But then $S\pi_n(\gamma ) {S^{-1}}_{|_{SH_n}}$ is unitary for every $n\in\N,~\gamma \in \Gamma$. Choosing any unitary equivalence $U:SH_n\to H_n$ we get, that $US_{|_{H_n}}:H_n\to H_n$ unitarises $\pi_n$ and hence,
\begin{align*}s\left(US_{|_{H_n}}\right)\geq s(S_n)> n~\forall n\in\N\end{align*}
This contradicts the boundedness of $S$ as $\left\Vert US_{|_{H_n}}\right\Vert\leq \Vert S\Vert$.
\end{proof}
We now aim at translating the above theorem into our geometric setup. In Lemma \ref{diamvert}, we have already seen that the size of some uniformly bounded $\pi$ corresponds to the diameter of the $\rho_\pi$-orbit of $\id_H$. It will turn out that the size of a smallest unitariser corresponds to the distance of $\id_H$ to the fixed point set of $\rho_\pi$. But unlike the size of an operator, the set of unitarisers or the fixed-point-set $\Pp(H)^\Gamma$, which are closed under scaling with positive real numbers, the ``metric counterpart'' $d(SS^*,\id_H)$ of the size of a smallest positive unitariser is not. The following lemma implies, that one can construct a fixed point coming from a smallest unitariser, which also realizes the distance to $d(\id_H,\Pp(H)^\Gamma)$.
\begin{lem}\label{symspec}
 Let $\pi$ be a unitarisable representation of $\Gamma$ and $\rho_\pi$ the induced action of $\Gamma$ on $\Pp(H)$. Then, there is a fixed point $\bar T$ associated to a smallest unitariser $S$ of $\pi$, such that
\begin{align*}
 d(\bar T,\id_H)=d\left(\Pp(H)^\Gamma,\id_H\right)=\ln(s(S))
\end{align*}
\end{lem}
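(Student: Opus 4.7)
The plan is to start from a smallest positive unitariser $S$ (which exists by Lemma \ref{infgleichmin}) and observe the following obstruction: for a positive invertible $T$, one has
\begin{align*}
d(\id_H, T) \;=\; \|\ln T\| \;=\; \max\bigl\{\ln\|T\|,\,\ln\|T^{-1}\|\bigr\},
\end{align*}
whereas $\ln s(\sqrt{T}) = \tfrac{1}{2}(\ln\|T\| + \ln\|T^{-1}\|)$. These two quantities coincide exactly when $\|T\| = \|T^{-1}\|$. So for a generic smallest unitariser $S$, the associated fixed point $S^2$ will sit strictly further from $\id_H$ than $\ln s(S)$. The key idea is to rescale $S$ by a positive constant to symmetrise its norm and inverse norm, without touching its size or unitarising property.

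Concretely, I would set $\lambda := \sqrt{\|S^{-1}\|/\|S\|}$ and $\bar S := \lambda S$. Scalar multiplication by a positive number preserves $s(S)$ and commutes with conjugation, so $\bar S$ is still a smallest unitariser. By construction $\|\bar S\| = \|\bar S^{-1}\| = \sqrt{s(S)}$. Define $\bar T := \bar S \bar S^* = \lambda^2 S^2$. Since $S^2$ is a fixed point and $\Pp(H)^\Gamma$ is closed under multiplication by positive scalars (as already noted in the proof of Lemma \ref{infgleichmin}), $\bar T \in \Pp(H)^\Gamma$. A direct computation gives
\begin{align*}
d(\id_H,\bar T) \;=\; \max\bigl\{\ln\|\bar T\|,\ln\|\bar T^{-1}\|\bigr\} \;=\; \max\bigl\{\ln\|\bar S\|^2,\ln\|\bar S^{-1}\|^2\bigr\} \;=\; \ln s(S),
\end{align*}
which yields the upper bound $d(\id_H,\Pp(H)^\Gamma) \leq \ln s(S)$.

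For the reverse inequality, let $T' \in \Pp(H)^\Gamma$ be arbitrary. Then $\sqrt{T'}$ is a unitariser by Lemma \ref{fixedunit}, so $s(S) \leq s(\sqrt{T'})$ by minimality of $S$. Since both $\ln\|T'\|$ and $\ln\|T'^{-1}\|$ are bounded above by $d(\id_H,T') = \|\ln T'\|$, averaging gives
\begin{align*}
\ln s(\sqrt{T'}) \;=\; \tfrac{1}{2}\bigl(\ln\|T'\| + \ln\|T'^{-1}\|\bigr) \;\leq\; d(\id_H,T'),
\end{align*}
and combining the two inequalities yields $\ln s(S) \leq d(\id_H,T')$. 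Taking the infimum over $T' \in \Pp(H)^\Gamma$ completes the proof.

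The only real obstacle is spotting that the naive candidate $S^2$ does \emph{not} realise the distance; once one recognises that the size of an operator is invariant under positive rescaling while $d(\id_H,\cdot)$ is not, the scaling factor $\lambda$ is essentially forced, and the rest is bookkeeping with the norm identities for positive operators.
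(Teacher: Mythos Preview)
Your proof is correct and follows essentially the same approach as the paper: rescale the fixed point $S^2$ by a positive scalar to symmetrise the spectrum (your $\lambda^2 = \|S^{-1}\|/\|S\|$ is exactly the paper's factor $\sqrt{\|T\|^{-1}\|T^{-1}\|}$ applied to $T=S^2$). The one point of divergence is the lower bound: the paper argues that any distance-minimising fixed point must itself have $\|T\|=\|T^{-1}\|$ (by minimality within its own ray $\{\alpha T\}$) and then reads off $d(\id_H,T)=\ln s(\sqrt{T})\geq\ln s(S)$, whereas you bypass the minimiser entirely with the direct inequality $\tfrac12(\ln\|T'\|+\ln\|T'^{-1}\|)\leq\max\{\ln\|T'\|,\ln\|T'^{-1}\|\}$ for arbitrary $T'\in\Pp(H)^\Gamma$ --- this is slightly cleaner since it avoids invoking existence of a distance-minimising fixed point.
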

\begin{proof}\frei\\
 By multiplying the fixed point $T:=SS^*$ corresponding to a smallest positive unitariser $S$ with $\left(\min(\sigma(T))\cdot\max(\sigma(T))\right)^{-\frac12}=\sqrt{\Vert T\Vert^{-1}\cdot \Vert T^{-1}\Vert} 
$
 one gets a fixed point $\bar T$ such that 
\begin{align*}\left(\min\sigma(\bar T)\right)^{-1}&=\left(\left(\min\sigma(T)\cdot\max\sigma(T)\right)^{-\frac12}
\min\sigma(T)\right)^{-1}=\left(\frac{\max\sigma(T)}{\min\sigma(T)}\right)^{\frac12}\\
&=\left(\left(\min\sigma(T)\cdot\max\sigma(T)\right)^{-\frac12}\max\sigma(T)\right)=\max\sigma(\bar T)
\end{align*}
And therefore $\Vert \bar T\Vert=\left\Vert\bar T^{-1}\right\Vert=\sqrt{\Vert T\Vert\cdot\Vert T^{-1}\Vert}$
which in turn implies 
\begin{align*}
s(S)&=\sqrt{s(T)}=\sqrt{s(\bar T)}=\exp(\ln\sqrt{\Vert\bar T\Vert^2})=\exp(d(\id_H,\bar T))
\end{align*}
Besides, operators with such spectral symmetry are precisely those, that realize
\begin{align*}
 \min\left\{\left.\Vert\ln(\alpha S)\Vert\right|\alpha\in\R_+\right\}&=\min\limits_{\alpha\in\R_+}\left\{\max\left\{\ln\Vert\alpha S\Vert,\ln\left(\Vert(\alpha S)^{-1}\Vert^{-1}\right)\right\}\right\}\\
&=\min\limits_{\alpha\in\R_+}\left\{\max\left\{\vert\ln\min(\sigma(\alpha S))\vert,\vert\ln\max(\sigma(\alpha S))\vert\right\}\right\}\\
&=\min\limits_{\alpha\in\R_+}\left\{\max\left\{\begin{array}{l}\vert\ln\alpha+\ln\min(\sigma(S))\vert\\\vert\ln\alpha+\ln\max(\sigma(S))\vert\end{array}\right\}\right\}
\end{align*}
Hence, we can argue conversely that a fixed point $T$ of the $\Gamma$-action $\rho_\pi$ minimizing the distance $d(T,\id_H)$, will have $\Vert T\Vert=\Vert T^{-1}\Vert$ and therefore $d(\id_H,T)=\Vert\ln T\Vert=\ln\Vert T\Vert=\ln\left(s(T)^{\frac12}\right)=\ln s(S)$ (recall that $T=SS^*$ for a smallest unitariser $S$).\par
Thus, we have seen that smallest unitarisers with $\Vert S\Vert=\Vert S^{-1}\Vert$ stand in 1:1-corres-pondence with points in $\Pp(H)^\Gamma$ having minimal distance to $\id_H$ and (by the $\Gamma$-invariance of $d$) to the $\Gamma$-orbit of $\id_H$.
\end{proof}\pagebreak
We can now give an equivalent, geometric version of Theorem \ref{eins}:
\begin{cor}\label{einsgeom}
 Let $\Gamma$ be a unitarisable group. Then, there are universal constants $C(\Gamma)$ and $\alpha(\Gamma)$ depending only on $\Gamma$ such that for any action $\rho_\pi$ of $\Gamma$ on $\Pp(H)$ induced by a uniformly bounded representation $\pi$ on $H$,
\begin{align*}
 d\left(\id_H,\Pp(H)^\Gamma\right)=C(\Gamma)+\frac{\alpha(\Gamma)}2\diam(\pi)
\end{align*}
\end{cor}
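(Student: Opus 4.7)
The plan is to translate Theorem~\ref{eins} verbatim into the language of the Thompson metric on $\Pp(H)$, using two dictionary entries that have already been established: Lemma~\ref{symspec}, which identifies the distance from $\id_H$ to $\Pp(H)^\Gamma$ with $\ln s(S)$ for a smallest unitariser $S$, and Lemma~\ref{diamvert}, which identifies $\ln\vert\pi\vert$ with $\tfrac{1}{2}\diam(\pi)$. Combining these should reduce the corollary to a one-line invocation of Theorem~\ref{eins}.

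First, I would fix a uniformly bounded representation $\pi$ on $H$ and invoke Lemma~\ref{infgleichmin} to produce a smallest (positive) unitariser $S$. By Lemma~\ref{symspec}, after multiplying $T=SS^*$ by the spectral-symmetry constant described there, one has
\begin{align*}
d\bigl(\id_H,\Pp(H)^\Gamma\bigr)\;=\;\ln s(S).
\end{align*}
Next, Theorem~\ref{eins} supplies constants $K(\Gamma),\alpha(\Gamma)$ depending only on $\Gamma$ with $s(S)\leq K(\Gamma)\cdot\vert\pi\vert^{\alpha(\Gamma)}$, and taking logarithms turns this into
\begin{align*}
\ln s(S)\;\leq\;\ln K(\Gamma)+\alpha(\Gamma)\ln\vert\pi\vert.
\end{align*}
Finally, Lemma~\ref{diamvert} gives $\ln\vert\pi\vert=\tfrac{1}{2}\diam(\pi)$, and setting $C(\Gamma):=\ln K(\Gamma)$ with the same exponent $\alpha(\Gamma)$ closes the argument.

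I read the ``$=$'' in the statement of the corollary as a typographical stand-in for ``$\leq$''; the interest of the corollary is precisely that it is \emph{equivalent} to Theorem~\ref{eins}, and the reverse implication is obtained by reading the chain above backwards, since the only non-equality step is the application of Theorem~\ref{eins} itself. There is no substantial obstacle here: once Lemmas~\ref{symspec} and~\ref{diamvert} are in place, the proof amounts to bookkeeping of logarithms. The conceptual point worth flagging is the observation this makes precise — that Theorem~\ref{eins} is nothing but a quantitative statement that $\id_H$ (equivalently, the $\Gamma$-orbit of $\id_H$) remains within a distance from $\Pp(H)^\Gamma$ that grows at most linearly in the diameter of that orbit.
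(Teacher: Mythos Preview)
Your proof is correct and follows essentially the same route as the paper: invoke Lemma~\ref{infgleichmin} (together with Lemma~\ref{fixedunit}) to obtain a smallest positive unitariser $S$, use Lemma~\ref{symspec} to rewrite $d(\id_H,\Pp(H)^\Gamma)$ as $\ln s(S)$, apply Theorem~\ref{eins}, and then convert $\ln\vert\pi\vert$ to $\tfrac12\diam(\pi)$ via Lemma~\ref{diamvert}, setting $C(\Gamma)=\ln K(\Gamma)$. Your remark that the displayed ``$=$'' should be read as ``$\leq$'' is also on point: the paper's own computation ends with an inequality, not an equality.
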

\begin{proof}\frei\\
 First of all, by Lemmas \ref{fixedunit} and \ref{infgleichmin}, $\Pp(H)^\Gamma$ is non-empty and the distance $ d\left(\id_H,\Pp(H)^\Gamma\right)$ is realized by some particular $T$ such that (Lemma \ref{symspec}) $d(\id_H,T)=\ln s(S)$ for the smallest unitariser $S=\sqrt{T}$ corresponding to $T$.\par
Now, by Theorem \ref{eins}, there are $K(\Gamma)$ and $\alpha(\Gamma)$ such that $ s(S)\leq K(\Gamma)\vert\pi\vert^{\alpha(\Gamma)}$.\\
Therefore, taking together both results and using Lemma \ref{diamvert}
\begin{align*}
 d\left(\rho_\pi(\Gamma,\id_H),\Pp(H)^\Gamma\right)&=\ln s(S)\leq\ln\left(K\vert\pi\vert^\alpha\right)=\ln K+\alpha\ln\vert\pi\vert=\ln K +\frac\alpha2\diam(\pi)
\end{align*}
Which proves the claim for $C(\Gamma)=\ln K(\Gamma)$.
\end{proof}
The following is due to G. Pisier:
\begin{thm}[\cite{pis2}]\label{zwei}
The following are equivalent for a discrete group $\Gamma$\begin{itemize}
\item $\Gamma$ is amenable. 
\item Theorem \ref{eins} holds for $K(\Gamma)=0,~\alpha(\Gamma)=2$.
\end{itemize}
\end{thm}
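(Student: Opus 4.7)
The plan is to prove the two implications separately. The forward direction ($\Gamma$ amenable $\Rightarrow$ the quadratic bound) falls out of a barycenter/averaging argument inside $\Pp(H)$ that fits cleanly into the geometric picture developed above. The backward direction is the substantive content of Pisier's theorem \cite{pis2} and is the main obstacle.

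For $(\Rightarrow)$: Pick a left-invariant mean $m$ on $\ell^\infty(\Gamma)$ and, for a given uniformly bounded $\pi$, form the weak integral
\begin{align*}
\langle T\xi,\eta\rangle := m\bigl(\gamma\mapsto\langle\pi(\gamma)\pi(\gamma)^*\xi,\eta\rangle\bigr),
\end{align*}
which is well-defined because the integrand is bounded by $|\pi|^2\|\xi\|\|\eta\|$. Left-invariance of $m$ yields $\pi(\gamma_0)T\pi(\gamma_0)^*=T$ for every $\gamma_0$, so $T\in\Pp(H)^\Gamma$, and the operator sandwich $|\pi|^{-2}\id_H\leq\pi(\gamma)\pi(\gamma)^*\leq|\pi|^2\id_H$ (the lower half coming from $\|\pi(\gamma^{-1})\|\leq|\pi|$) passes through $m$ to give the same sandwich for $T$. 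Rescaling $T$ to the spectrally symmetric representative $\bar T$ as in Lemma \ref{symspec} yields $\|\bar T\|=\|\bar T^{-1}\|\leq|\pi|^2$, hence $d(\id_H,\bar T)=\ln\|\bar T\|\leq 2\ln|\pi|=\diam(\pi)$, and $\sqrt{\bar T}$ is a positive unitariser of size at most $|\pi|^2$. This is exactly $(K,\alpha)=(1,2)$ in Theorem \ref{eins}, or equivalently $(C,\alpha)=(0,2)$ in Corollary \ref{einsgeom}.

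For $(\Leftarrow)$: I would argue by contraposition following Pisier. Assume $\Gamma$ is non-amenable, and pick a unitary representation $\pi_0$ of $\Gamma$ on a Hilbert space $H$ together with a bounded $\pi_0$-cocycle $b:\Gamma\to H$ whose distance to the inner coboundaries $\{\gamma\mapsto\pi_0(\gamma)\xi-\xi:\xi\in H\}$ is comparable to $\sup_\gamma\|b(\gamma)\|$; the existence of such a $b$ is a manifestation of non-amenability. The triangular representation
\begin{align*}
\tilde\pi(\gamma)=\begin{pmatrix}\pi_0(\gamma)&b(\gamma)\\0&1\end{pmatrix}
\end{align*}
on $H\oplus\mathbb{C}$ is uniformly bounded with $|\tilde\pi|$ controlled by $\sup\|b\|$, and a direct calculation shows that any unitariser of $\tilde\pi$ must trivialise $b$ as a coboundary, so the size of the smallest unitariser is controlled from below by $\inf\{\|\xi\|:b(\gamma)=\pi_0(\gamma)\xi-\xi\}$. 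The chosen $b$ then forces this lower bound to exceed $|\tilde\pi|^2$, contradicting the hypothesis.

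The reverse direction is the genuine obstacle: $(\Rightarrow)$ is a soft averaging argument inside the convex cone $\Pp(H)$, but reversing it requires a bounded-cohomology witness of non-amenability and cannot be extracted from the metric geometry of $\Pp(H)$ alone. In a write-up I would present the geometric proof of $(\Rightarrow)$ in full and cite \cite{pis2} for $(\Leftarrow)$.
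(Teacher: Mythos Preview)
Your proposal is correct. Note first that the paper does not supply a proof at the point where Theorem~\ref{zwei} is stated; it is attributed to Pisier~\cite{pis2}. The forward implication is, however, established later in the paper as Corollary~\ref{olkj}, and the backward implication is left entirely to the citation---exactly as you propose to do.

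The only genuine difference is in the forward direction. You give the classical Dixmier--Day argument: average the orbit $\gamma\mapsto\pi(\gamma)\pi(\gamma)^*$ against an invariant mean to obtain a fixed point $T$ with spectrum in $[\,|\pi|^{-2},|\pi|^2\,]$, whence $s(\sqrt T)\leq|\pi|^2$. The paper instead builds the machinery of GCB-spaces, constructs equivariant barycenter maps $b_n$ for finite tuples, applies them to a F{\o}lner sequence inside the orbit $\Gamma\cdot\id_H$, and invokes property~(C) to extract a fixed point convex close to the orbit (Theorem~\ref{existfixed}), which lands in $X_\pi$ and hence gives the same bound. Your argument is shorter and more elementary; the paper's argument is the one that generalises to arbitrary GCB-spaces with property~(C) and drives the later results on virtual unitarisability and amenable extensions. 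Both are perfectly valid, and both yield $(K,\alpha)=(1,2)$ (the ``$K(\Gamma)=0$'' in the statement is evidently a typo for $C(\Gamma)=\ln K(\Gamma)=0$, which you read correctly).

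Your sketch of $(\Leftarrow)$ via a triangular perturbation by a bounded cocycle is morally the right picture, but as written it does not quite close: a general unitariser of $\tilde\pi$ need not be upper triangular, and one needs a quantitative link between the distance of $b$ to coboundaries and the size of \emph{any} unitariser, not just triangular ones. Since you explicitly plan to cite \cite{pis2} for this half, and the paper does the same, this is not a defect in your write-up.
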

This theorem now has a neat geometric translation.
\begin{defn} For a representation $\pi$ of some group $\Gamma$ on the Hilbert space $H$, define
\begin{align*}
 X_\pi&:=\left\{x\in\Pp(H): d(x,\rho_\pi(\gamma ,\id_H))\leq\diam\rho_\pi(\Gamma,\id_H)~\forall \gamma \in \Gamma\right\}\\
&=\left\{x\in\Pp(H): d(x,y)\leq\diam\rho_\pi(\Gamma,\id_H)~\forall y\in\conv\rho_\pi(\Gamma,\id_H)\right\}
\end{align*}\end{defn}
\begin{cor}\label{amenchar}
 A group $\Gamma$ is amenable, if and only if $ X_\pi\cap\Pp(H)^\Gamma\neq\emptyset$ for every uniformly bounded representation $\pi$.
\end{cor}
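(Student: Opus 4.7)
The plan is to translate the condition $X_\pi\cap\Pp(H)^\Gamma\neq\emptyset$ into a quantitative statement about the distance from $\id_H$ to the fixed-point set, and then invoke Pisier's Theorem \ref{zwei}. The first step is to exploit that $\rho_\pi$ acts by isometries: for any fixed point $x\in\Pp(H)^\Gamma$ and any $\gamma\in\Gamma$,
\[
d(x,\rho_\pi(\gamma,\id_H))=d(\rho_\pi(\gamma,x),\rho_\pi(\gamma,\id_H))=d(x,\id_H).
\]
Hence a fixed point lies in $X_\pi$ if and only if $d(x,\id_H)\leq\diam\rho_\pi(\Gamma,\id_H)=\diam(\pi)$, so
\[
X_\pi\cap\Pp(H)^\Gamma\neq\emptyset\ \Longleftrightarrow\ \Pp(H)^\Gamma\neq\emptyset\ \text{and}\ d(\id_H,\Pp(H)^\Gamma)\leq\diam(\pi).
\]

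Next I would apply Lemma \ref{symspec} together with Lemma \ref{diamvert}: for unitarisable $\pi$ with smallest unitariser $S$ one has $d(\id_H,\Pp(H)^\Gamma)=\ln s(S)$, and $\diam(\pi)=2\ln|\pi|$. Therefore the intersection is non-empty for every uniformly bounded $\pi$ precisely when every such $\pi$ is unitarisable with smallest unitariser satisfying $s(S)\leq|\pi|^2$, which is the statement of Theorem \ref{eins} with constants $K(\Gamma)=1$ and $\alpha(\Gamma)=2$.

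Finally I would invoke Theorem \ref{zwei} to identify this latter condition with amenability of $\Gamma$. In the forward direction, amenability already ensures unitarisability of $\pi$, and Pisier's quadratic bound combined with Lemma \ref{symspec} provides a fixed point within distance $\diam(\pi)$ of $\id_H$, i.e.\ inside $X_\pi$. In the backward direction, non-emptiness of $X_\pi\cap\Pp(H)^\Gamma$ forces $\Pp(H)^\Gamma\neq\emptyset$ for every uniformly bounded $\pi$, so $\Gamma$ is unitarisable, and the reduction above recovers the quadratic bound $s(S)\leq|\pi|^2$.

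I do not expect a serious obstacle: the corollary is essentially a dictionary translation between the algebraic bound $s(S)\leq|\pi|^2$ and the geometric non-emptiness of $X_\pi\cap\Pp(H)^\Gamma$. The only point worth care is the short equivariance computation that, for a fixed point $x$, collapses the defining condition of $X_\pi$ (a bound on infinitely many distances) to the single inequality $d(x,\id_H)\leq\diam(\pi)$; everything else follows directly from the previously established lemmas.
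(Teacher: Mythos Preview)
Your argument is correct and follows exactly the route the paper intends: the paper's own proof is the single sentence ``This is an obvious consequence of Theorem \ref{zwei} and Corollary \ref{einsgeom},'' and what you have written is precisely the unpacking of that sentence, including the equivariance computation that collapses the defining condition of $X_\pi$ for a fixed point to the single inequality $d(x,\id_H)\leq\diam(\pi)$. (Note that the paper's statement of Theorem~\ref{zwei} with $K(\Gamma)=0$ is evidently a typo for $K(\Gamma)=1$, equivalently $C(\Gamma)=0$ in Corollary~\ref{einsgeom}; your use of $K=1$ is the correct reading.)
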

\begin{proof}
This is an obvious consequence of Theorem \ref{zwei} and Corollary \ref{einsgeom}.
\end{proof}
And Dixmier's question now translates into
\begin{qu}
Is it true, that $ X_\pi\cap\Pp(H)^\Gamma\neq\emptyset$ for every unitarisable $\Gamma$ and every uniformly bounded $\pi$?
\end{qu}
\section{Topological facts about the cone of positive operators}
On $\Pp(H)$ there are two structures: the metric and the linear structure. We will now look at their interplay.
\begin{nota}
 We shall denote by $\tau_d$ the metric topology, by $\tau_{\Vert\cdot\Vert}$ the ordinary norm-topology and the weak operator topology will be denoted by $\tau_w$.\par
Furthermore, we will denote by $\overline A$ the closure of $A$ with respect to the ambient topology. If it is needed, the topology with respect to which we mean $\overline A$ to be closed, will be noted $\overline A^\tau$.
\end{nota}
\subsection{Compactness}
\begin{rem}
 We remark first of all, that all topologies discussed in this chapter are invariant under the $d$-isometries $A\mapsto B^{\frac12}AB^{\frac12}$ with positive and invertible operators $B$.\par
As the space of positive invertible operators is not closed (with respect to any of the topologies discussed here apart from $\tau_d$), one has to keep in mind, that generally speaking $\tau$-convergent nets do not have to have their limit in $\PP(H)$.
\end{rem}
\begin{lem}\label{nextproof}
Open (closed) $d$-balls of radius $r$ around $\id_H$ correspond to open (closed) norm-balls (intersected with the space of positive operators).\par
Therefore, closed $d$-balls of finite radius around $\id_H$ are compact with respect to $\tau_w$.
\end{lem}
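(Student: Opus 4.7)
The plan is to recast the Thompson $d$-ball around $\id_H$ as a spectral condition, identify it with a norm-ball intersected with $\Pp(H)$, and then extract compactness from Banach--Alaoglu together with a weak-closedness check.

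First, from the definition of the Thompson metric one has $d(x,\id_H) = \Vert\ln x\Vert$ for $x\in\Pp(H)$, and continuous functional calculus gives $\Vert\ln x\Vert = \sup\{|\ln\lambda|:\lambda\in\sigma(x)\}$. So $d(x,\id_H)\leq r$ is equivalent to $\sigma(x)\subset[e^{-r},e^r]$, which, since $x$ is self-adjoint, is in turn equivalent to the norm condition $\Vert x - \cosh(r)\id_H\Vert\leq\sinh(r)$ (because $\cosh(r)\pm\sinh(r)=e^{\pm r}$). Hence the closed $d$-ball of radius $r$ around $\id_H$ coincides with the closed operator-norm ball of radius $\sinh(r)$ centered at $\cosh(r)\id_H$, intersected with $\Pp(H)$; the open case is identical with strict inequalities.

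For the compactness assertion I would argue that this closed $d$-ball is contained in the closed operator-norm ball of radius $e^r$ around $0\in B(H)$, which is $\tau_w$-compact: this is the standard fact that norm-closed balls in $B(H)$ are weak operator compact, obtained from Banach--Alaoglu applied to the predual of trace-class operators, together with the fact that on norm-bounded sets the ultraweak and weak operator topologies agree. It then remains only to verify that the $d$-ball is itself $\tau_w$-closed. For this, if $(x_\alpha)$ is a net in the $d$-ball converging weakly to some $x\in B(H)$, then $\langle x_\alpha\xi,\xi\rangle\in[e^{-r}\Vert\xi\Vert^2,e^r\Vert\xi\Vert^2]$ for every $\xi\in H$, and by weak convergence the limit $\langle x\xi,\xi\rangle$ lies in the same interval. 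Thus $e^{-r}\id_H\leq x\leq e^r\id_H$, so $x$ is positive and invertible, and the $d$-ball is closed in $\tau_w$; being a weakly closed subset of a weakly compact set, it is weakly compact.

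The only potentially subtle point is that the matching norm-ball is not centered at $\id_H$ but rather at $\cosh(r)\id_H$, with radius $\sinh(r)$ -- one has to read this out of the spectral characterization. Once this spectral identification is in place, both assertions of the lemma reduce to routine applications of continuous functional calculus and Banach--Alaoglu.
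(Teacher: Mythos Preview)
Your proof is correct and follows essentially the same approach as the paper: both derive the spectral characterization $\sigma(x)\subset[e^{-r},e^r]$, identify the $d$-ball with the norm ball of radius $\sinh(r)=\tfrac{e^r-e^{-r}}{2}$ centered at $\cosh(r)\id_H=\tfrac{e^r+e^{-r}}{2}\id_H$, and then deduce $\tau_w$-compactness from Banach--Alaoglu together with the observation that weak limits remain positive invertible (the paper phrases this as ``the intersection with $\Pp(H)$ equals the intersection with the $\tau_w$-closed set of positive operators'', while you verify it directly via the quadratic form $\langle x\xi,\xi\rangle$; these are the same argument).
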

\begin{proof}\frei\\
 One sees that
\begin{align}
 B^{d}(\id_H,r)&=\left\{S\in\Pp(H)\left|\Vert\ln S\Vert<r\right.\right\}\nonumber\\
 &=\left\{S\in\Pp(H)\left|\max\left\{\vert\ln\min(\sigma(S))\vert,\vert\ln\max(\sigma(S))\vert\right\}<r\right.\right\}\nonumber\\
 &=\left\{S\in\Pp(H)\left|\sigma(S)\subseteq\left(\exp(-r),\exp(r)\right)\right.\right\}\label{specdefn}
\end{align} 
which gives a spectral definition of $d$-balls around $\id_H\in\Pp(H)$.\par
Furthermore, this yields
\begin{align}
B^{d}(\id_H,r)&=\frac{\exp(-r)+\exp(r)}2\id_H+\nonumber\\&\hspace{0.13cm}+\left\{S=S^*\left|\sigma(S)\subseteq\left(-\frac{\exp(r)-\exp(-r)}2,\frac{\exp(r)-\exp(-r)}2\right)\right.\right\}\nonumber\\
&=\frac{\exp(-r)+\exp(r)}2\id_H+B^{\Vert\cdot\Vert}\left(0,\frac{\exp(r)-\exp(-r)}2\right)\cap\Pp(H)\nonumber\\
&=B^{\Vert\cdot\Vert}\left(\frac{\exp(-r)+\exp(r)}2\id_H,\frac{\exp(r)-\exp(-r)}2\right)\cap\Pp(H)\label{specdefn2}
\end{align}
The same is obviously true, if $<$ is replaced by $\leq$ and open intervals by closed intervals in the calculation above.\par
To prove compactness of closed $d$-balls of radius $r$, one has to see that operators in $B:=\overline{B^{\Vert\cdot\Vert}\left(\frac{\exp(-r)+\exp(r)}2\id_H,\frac{\exp(r)-\exp(-r)}2\right)}$ have spectrum away from $0$ and are therefore invertible.\par
This implies, that the intersection of $B$ with $\Pp(H)$ is the same as its intersection with the $\tau_w$-closed space of positive operators. Hence, as an intersection of a $\tau_w$-compact set with a $\tau_w$-closed set, $\overline{B^{d}(id_H,r)}$ is itself $\tau_w$-compact.
\end{proof}
\begin{thm}\label{sametop}
 The topologies $\tau_d$ and $\tau_{\Vert\cdot\Vert}$ agree on $\Pp(H)$.
\end{thm}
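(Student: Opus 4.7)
The plan is to reduce the statement to the local situation at $\id_H$, which is essentially handled by Lemma \ref{nextproof}, and then to transport the resulting agreement to every other point of $\Pp(H)$ via the $d$-isometries $\phi_B \colon A \mapsto B^{\frac12}AB^{\frac12}$.

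First, formula (\ref{specdefn2}) of Lemma \ref{nextproof} identifies each open $d$-ball $B^d(\id_H,r)$ with the intersection of $\Pp(H)$ and the norm-ball centered at $\frac{e^r + e^{-r}}{2}\id_H$ of radius $\frac{e^r - e^{-r}}{2}$. As $r \downarrow 0$ the centers converge in norm to $\id_H$ and the radii shrink to $0$, so the family $\{B^d(\id_H,r)\}_{r > 0}$ is simultaneously a neighborhood base for $\tau_d$ and for $\tau_{\Vert\cdot\Vert}$ at $\id_H$. Hence the two topologies have the same neighborhood filter at $\id_H$.

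To extend this to an arbitrary $B \in \Pp(H)$, consider $\phi_B \colon \Pp(H) \to \Pp(H)$, $A \mapsto B^{\frac12} A B^{\frac12}$. This is a bijection sending $\id_H$ to $B$, with inverse $\phi_{B^{-1}}$. As noted just after the introduction of the metric $d$, $\phi_B$ is a $d$-isometry, hence in particular a $\tau_d$-homeomorphism. It is also a $\tau_{\Vert\cdot\Vert}$-homeomorphism: the estimate $\Vert\phi_B(A) - \phi_B(A')\Vert \leq \Vert B\Vert\cdot\Vert A - A'\Vert$ and its analogue for $\phi_{B^{-1}}$ show that both $\phi_B$ and its inverse are norm-continuous. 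Consequently $\phi_B$ pushes a neighborhood base at $\id_H$ to one at $B$ in either topology, and the agreement at $\id_H$ propagates to $B$. Since $B \in \Pp(H)$ was arbitrary, this gives $\tau_d = \tau_{\Vert\cdot\Vert}$ on $\Pp(H)$.

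I do not expect a genuine obstacle: the substantive work has already been done in Lemma \ref{nextproof}, and the remainder is a routine transport of structure using the transitive group of $d$-isometries. The only delicate point is verifying that $\phi_B$ is a $\tau_{\Vert\cdot\Vert}$-homeomorphism and not merely a $\tau_d$-one, but this is covered by the remark preceding Lemma \ref{nextproof} and is anyway immediate from the boundedness of $B^{\pm \frac12}$.
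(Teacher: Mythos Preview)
Your proof is correct and follows essentially the same strategy as the paper: establish that the neighborhood filters of $\tau_d$ and $\tau_{\Vert\cdot\Vert}$ coincide at $\id_H$ via Lemma \ref{nextproof}, then transport this coincidence to every point using the transitive family $\{\phi_B\}$ of maps that are homeomorphisms for both topologies. The only cosmetic difference is that the paper handles the two inclusions at $\id_H$ separately (showing explicitly that a given norm-ball around $\id_H$ contains some $B^d(\id_H,r)$ via the spectral description), whereas you obtain both directions at once by observing that the $d$-balls $B^d(\id_H,r)$ already form a $\tau_{\Vert\cdot\Vert}$-neighborhood base at $\id_H$ as $r\downarrow 0$.
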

\begin{proof}\frei\\
 In Lemma \ref{nextproof}, we have seen that $d$-balls around $\id_H\in\Pp(H)$ are also balls (of different radius and around different midpoints) with respect to the norm.\par
Conversely, given a radius $\alpha\in(0,1)$ the norm-ball $B^{\Vert\cdot\Vert}(\id_H,\alpha)$ of radius $\alpha$ around $\id_H$ (intersected with $\Pp(H)$) consists of all positive operators with spectrum in the interval $(1-\alpha,1+\alpha)$. Now choose some $r>0$ with $\exp(r)<1+\alpha$, then
\begin{align*}
 1>1-\alpha^2=(1-\alpha)(1+\alpha)&\Rightarrow~1-\alpha<\frac1{1+\alpha}<\exp(-r)\\
&\Rightarrow \left(\exp(-r),\exp(r)\right)\subset(1-\alpha,1+\alpha)
\end{align*}
By (\ref{specdefn}), it is obvious, that $ B^d(\id_H,r)\subset B^{\Vert\cdot\Vert}(\id_H,\alpha)$.\par
We have shown that the local bases at $\id_H$ for the topologies $\tau_d$ and $\tau_{\Vert\cdot\Vert}$ are equivalent in the way that every element of one of the local bases contains a neighbourhood of $\id_H$ from the other topology and both topologies share the transitive subgroup of their homeomorphisms, namely $\{x\mapsto axa^*,a\in B(H)~\textrm{invertible}\}$. Hence both topologies are the same.
\end{proof}
\begin{rem}
 The fact, that the metric topologies $\tau_{\Vert\cdot\Vert}$ and $\tau_d$ coincide on $\Pp(H)$ does not imply, that the corresponding metrics are equivalent in the following sense
\begin{align*}
 \exists c,C:~\forall x,y\in\Pp(H): c\leq\Vert x-y\Vert\leq d(x,y)\leq C\Vert x-y\Vert
\end{align*}
For example, the sequence $(x_n)_{n\in\N},~x_n:=\frac 1n\id_H$ is bounded in norm but not in $\tau_d$.\par
This fact does not contradict the equality of $\tau_d$ and $\tau_{\Vert\cdot\Vert}$, since $\tau_d$ does not come from a Banach topology on $B(H)$ or the space of self-adjoint operators, for which equality of topologies implies equivalence of the corresponding norms (and hence metrics). \par 
But what we do have, is the following
\begin{lem} $d$-bounded subsets of $\Pp(H)$ are norm-bounded.
\end{lem}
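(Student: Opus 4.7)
The plan is to reduce to Lemma \ref{nextproof} via the transitive action of the $d$-isometry group $\{\phi_a : x \mapsto axa^* \mid a \in B(H) \text{ invertible}\}$ on $\Pp(H)$, and then exploit the fact that although these isometries are not norm-isometries, they are norm-continuous (being bounded linear maps on $B(H)$ for fixed $a$) and therefore map norm-bounded sets to norm-bounded sets.

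So let $A \subset \Pp(H)$ be $d$-bounded. If $A$ is empty there is nothing to prove; otherwise pick some $x_0 \in A$ and set $r := \diam_d(A) < \infty$, so that $A \subseteq B^d(x_0, r)$. Consider the $d$-isometry $\phi_a$ with $a := x_0^{-1/2}$; by the formula $\phi_a(x_0) = x_0^{-1/2} x_0 x_0^{-1/2} = \id_H$, it sends $x_0$ to $\id_H$, and therefore
\begin{align*}
\phi_a(A) \subseteq B^d(\id_H, r).
\end{align*}
By Lemma \ref{nextproof}, $B^d(\id_H, r)$ is contained in a norm-ball, so there exists $M > 0$ with $\|\phi_a(y)\| \leq M$ for all $y \in A$.

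Now undo the transformation: for any $y \in A$ we have $y = x_0^{1/2}\, \phi_a(y)\, x_0^{1/2}$, hence
\begin{align*}
\|y\| \leq \|x_0^{1/2}\|^2 \cdot \|\phi_a(y)\| \leq \|x_0\| \cdot M,
\end{align*}
which is a uniform bound, so $A$ is norm-bounded.

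There is no real obstacle here: the only ingredient beyond Lemma \ref{nextproof} is the norm-continuity of $\phi_{a^{-1}}$, which is elementary. The whole point is that while $\phi_a$ preserves $d$ exactly and distorts the norm, the distortion is by a \emph{fixed} multiplicative constant depending only on $x_0$, which is exactly what is needed to transfer norm-boundedness back from $\phi_a(A)$ to $A$.
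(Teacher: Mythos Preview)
Your proof is correct, but the paper's argument is more direct. Instead of moving $A$ via the isometry $\phi_{x_0^{-1/2}}$ and then pulling back, the paper simply observes that if $A$ is $d$-bounded then so is $A\cup\{\id_H\}$; hence $A$ already sits inside some closed ball $\overline{B^d(\id_H,r)}$, and Lemma~\ref{nextproof} (equation~(\ref{specdefn2})) identifies this with a norm-ball. No transport and no undoing step are needed.

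Your route does illustrate a useful general principle---that the maps $\phi_a$ are simultaneously $d$-isometries and norm-continuous, so $d$-geometric statements at $\id_H$ propagate to all of $\Pp(H)$---but here that machinery is heavier than required. One small quibble: with $r=\diam_d(A)$ you only get $A\subseteq\overline{B^d(x_0,r)}$, not the open ball; this is harmless (take $r+1$ or work with closed balls throughout), but worth tightening.
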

\begin{proof}\frei\\
 This is an easy consequence of Lemma \ref{nextproof}:\par If $A\subset\Pp(H)$ is bounded, then so is $A\cup\{\id_H\}$ and hence for some $r>0$
\begin{align*}
 A\subset\overline{B^d(\id_H,r)}\stackrel{(\ref{specdefn2})}\subset\overline{B^{\Vert\cdot\Vert}\left(\frac{\exp(-r)+\exp(r)}2\id_H,\frac{\exp(r)-\exp(-r)}2\right)}
\end{align*}
\end{proof}
\end{rem}
\begin{cor}\label{compactball}
 Closed $d$-balls are $\tau_w$ compact. Hence, every $d$-bounded subset of $\PP(H)$ is $\tau_w$-precompact.
\end{cor}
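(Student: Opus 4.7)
The plan is to deduce both statements from Lemma \ref{nextproof} by transporting its conclusion along the transitive family of $d$-isometries $\phi_B : A \mapsto B^{1/2} A B^{1/2}$ (for $B \in \Pp(H)$), exploiting the remark at the beginning of this subsection that all topologies under consideration are invariant under these maps.

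For the first claim, I would fix an arbitrary closed $d$-ball $\overline{B^d(y, r)}$ with $y \in \Pp(H)$ and $r > 0$. Taking $B = y$, the isometry $\phi_y$ sends $\id_H$ to $y$ and so maps $\overline{B^d(\id_H, r)}$ bijectively onto $\overline{B^d(y, r)}$. The key observation is that $\phi_y$ is a $\tau_w$-homeomorphism: for vectors $x_1, x_2 \in H$,
\begin{align*}
\langle \phi_y(A) x_1, x_2 \rangle = \langle A\, y^{1/2} x_1,\, y^{1/2} x_2 \rangle,
\end{align*}
so weak-operator convergence of a net $(A_i)$ to $A$ passes to $\phi_y(A_i) \to \phi_y(A)$, and the inverse $\phi_y^{-1} = \phi_{y^{-1}}$ is of the same shape. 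Since Lemma \ref{nextproof} gives $\tau_w$-compactness of $\overline{B^d(\id_H, r)}$, this transports to $\tau_w$-compactness of $\overline{B^d(y, r)}$.

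For the precompactness statement, let $A \subseteq \Pp(H)$ be $d$-bounded. Pick any base-point $a_0 \in A$ and set $r := \sup_{a \in A} d(a, a_0) < \infty$, so $A \subseteq \overline{B^d(a_0, r)}$. By the first part, this enclosing ball is $\tau_w$-compact, hence the $\tau_w$-closure of $A$ is a $\tau_w$-closed subset of a $\tau_w$-compact set and is therefore itself $\tau_w$-compact; equivalently, $A$ is $\tau_w$-precompact. The only nontrivial ingredient is the weak-operator continuity of the conjugation maps $\phi_B$, which is immediate from the inner-product identity above; everything else is a straightforward transfer along an isometry and a containment argument.
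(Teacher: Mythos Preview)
Your proof is correct and follows essentially the same route as the paper: transport the $\tau_w$-compactness of $\overline{B^d(\id_H,r)}$ from Lemma~\ref{nextproof} to an arbitrary closed ball via the map $A\mapsto y^{1/2}Ay^{1/2}$, then enclose any $d$-bounded set in such a ball. Your argument is in fact slightly more explicit than the paper's, since you verify the $\tau_w$-continuity of $\phi_y$ directly via the inner-product identity, whereas the paper simply invokes the earlier remark that these maps preserve all the topologies in play; note also that only $\tau_w$-continuity (not the full homeomorphism property) is needed, as the continuous image of a compact set is compact.
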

\begin{proof}\frei\\
Let $B=\overline{B^d(A,r)}$ be a closed $d$-ball. Then, $ B=A^{\frac12}\overline{B^d(\id_H,r)}A^{\frac12}$ and $B$ is the image of the $\tau_w$-compact set $B^d(\id_H,r)$ under a $\tau_w$-continuous map.\par 
Now, if a set $U$ is $d$-bounded, it is contained in a $\tau_w$-compact closed $d$-ball, which is weak operator closed. Hence, it contains the $\tau_w$-closure $\overline U$, which, as a closed subset of a $\tau_w$-compact set is itself $\tau_w$-compact.
\end{proof}
\begin{rem}It is not clear, whether or not $d$-closed, $d$-convex and $d$-bounded sets are $\tau_w$ closed. What we do know, though, is the following \end{rem}
\begin{prop}\label{heineborel}
 Let $(x_n)\subset\Pp(H)$ be a $\tau_w$-convergent sequence such that for some $y\in\Pp(H)$ and $N\in\N$ we have $d(x_n,y)<\alpha,~\forall n>N$. Then, this is also true for the limit point $x_0$ of $(x_n)$.\par
 In particular, if $A\subset\Pp(H)$ is $d$-closed, $d$-convex and $d$-bounded, then any $\tau_w$-limitpoint $x$ of some sequence $(x_n)_{n\in\N}\subset A$ fullfills $ d(x,y)\leq\diam(A)~\forall y\in A$
\end{prop}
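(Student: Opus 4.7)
The proof reduces to the single observation that every closed $d$-ball $\overline{B^{d}(y,\alpha)}$ is $\tau_w$-closed in $B(H)$. Granted that, the main assertion is immediate: the hypothesis $d(x_n,y)<\alpha$ for all $n>N$ places the tail of the sequence in $\overline{B^{d}(y,\alpha)}$, and the $\tau_w$-closedness of this ball then forces the $\tau_w$-limit $x_0$ to lie in it as well. In particular $x_0$ is genuinely in $\Pp(H)$ and satisfies $d(x_0,y)\leq\alpha$ (the strict inequality is not in general preserved under $\tau_w$-limits, so the conclusion is to be read with $\leq$).

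For the $\tau_w$-closedness of closed $d$-balls I would simply invoke Corollary \ref{compactball}, which says they are $\tau_w$-compact, together with the fact that the weak operator topology on $B(H)$ is Hausdorff (the functionals $A\mapsto\langle A\xi,\eta\rangle$ separate points), so that every $\tau_w$-compact subset is $\tau_w$-closed. A more hands-on variant would first reduce to $y=\id_H$ via the simultaneous $d$-isometry and $\tau_w$-homeomorphism $A\mapsto y^{-1/2}Ay^{-1/2}$, and then read off from identity (\ref{specdefn2}) that $\overline{B^{d}(\id_H,\alpha)}$ is the intersection of a norm-closed ball in $B(H)$ (which is $\tau_w$-closed by Banach--Alaoglu) with the $\tau_w$-closed cone of positive operators; the spectral characterization (\ref{specdefn}) recalled in the proof of Lemma \ref{nextproof} then guarantees that no element of this intersection can fail to be invertible, so the resulting set is in fact contained in $\Pp(H)$.

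The ``in particular'' clause follows by applying the first assertion to each $y\in A$ individually, with $\alpha=\diam(A)$: by $d$-boundedness $d(x_n,y)\leq\diam(A)<\infty$ for every $n$ and every $y\in A$, and the first part delivers $d(x_0,y)\leq\diam(A)$ for all $y\in A$. The hypotheses that $A$ be $d$-closed and $d$-convex are in fact not used for this weaker conclusion; they would be needed only if one wanted to promote the statement to $x_0\in A$.

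\textbf{Main obstacle.} There is no genuine obstacle: the proposition is essentially a packaging of Corollary \ref{compactball} together with the Hausdorff property of $\tau_w$. The single point worth checking carefully is that a $\tau_w$-limit of elements of $\Pp(H)$ could a priori escape to the boundary of the positive cone where invertibility fails; this possibility is ruled out precisely because the limit is trapped in a closed $d$-ball, whose elements have spectra uniformly bounded away from $0$ by the spectral description in Lemma \ref{nextproof}.
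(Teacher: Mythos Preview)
Your proposal is correct and follows essentially the same route as the paper: the paper's proof consists of the single sentence that the tail of $(x_n)$ lies in $\overline{B^d(y,\alpha)}$, which is $\tau_w$-compact by Corollary~\ref{compactball}, hence contains the $\tau_w$-limit. Your observation that only the non-strict inequality $d(x_0,y)\leq\alpha$ is guaranteed is well taken, and your remark that the $d$-closedness and $d$-convexity hypotheses are not actually needed for the ``in particular'' clause is also accurate.
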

\begin{proof}\frei\\
 The sequence $(x_n)$ lies in the $\overline{B^d(y,\alpha)}$, which is $\tau_w$-compact by Lemma \ref{compactball}.
\end{proof}
So, generally speaking, $\tau_w$-limit points of sequences inside $d$-convex and $d$-closed sets are ``not too far away'' from the sequence. This motivates the following definition:
\begin{defn}
 We say that a point $x$ is \textbf{convex close} to a subset $A$ of a metric space $X$, if $d(x,a)\leq\diam(A)~\forall a\in A$.
\end{defn}
In the last section, we introduced the space $X_\pi$ as the set of points convex close to $\rho_\pi(\Gamma,\id_H)$.\par
With the help of Proposition \ref{heineborel}, we may now collect some facts about this space:
\begin{prop}\label{xpi}
 Let $\pi:\Gamma\to B(H)$ be a uniformly bounded representation. Then the space $X_\pi\subset\Pp(H)$ is a $\tau_w$-compact $\Gamma$-space, which is $d$-convex, $d$-bounded and $d$-closed.
\end{prop}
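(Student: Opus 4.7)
The plan is to establish the four ``inner'' properties ($\Gamma$-invariance, $d$-convexity, $d$-boundedness, $d$-closedness) directly from the definition of $X_\pi$ using Theorem \ref{respect} and the isometry of the $\Gamma$-action, and then to deduce $\tau_w$-compactness from Corollary \ref{compactball} by exhibiting $X_\pi$ as an intersection of $\tau_w$-compact closed $d$-balls.

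First I would check $\Gamma$-invariance: for $x \in X_\pi$ and $\gamma' \in \Gamma$, since $\rho_\pi$ acts by isometries,
\[
d(\rho_\pi(\gamma',x),\rho_\pi(\gamma,\id_H)) \;=\; d(x,\rho_\pi((\gamma')^{-1}\gamma,\id_H)) \;\leq\; \diam(\pi)
\]
for every $\gamma\in\Gamma$, as $(\gamma')^{-1}\gamma$ ranges over $\Gamma$. For $d$-convexity, I apply Theorem \ref{respect} with the constant geodesic $\eta(y,y,\cdot)\equiv y$ at $y := \rho_\pi(\gamma,\id_H)$: given $x_1,x_2 \in X_\pi$,
\[
d(\eta(x_1,x_2,t), y) \;\leq\; (1-t)\,d(x_1,y) + t\,d(x_2,y) \;\leq\; \diam(\pi).
\]
For $d$-boundedness, note that $\rho_\pi(\Gamma,\id_H)\subseteq X_\pi$ by the very definition of $\diam(\pi)$; in particular $\id_H\in X_\pi$, and the triangle inequality then yields $\diam(X_\pi)\leq 2\diam(\pi)$. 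The $d$-closedness follows at once from the continuity of $d(\cdot,\rho_\pi(\gamma,\id_H))$ in its first argument.

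The only step that is not entirely formal is $\tau_w$-compactness, since $\tau_w$ is coarser than $\tau_d$ and $X_\pi$ is defined by $d$-inequalities that are a priori not preserved under weak limits. The key observation is the representation
\[
X_\pi \;=\; \bigcap_{\gamma\in\Gamma} \overline{B^{d}\bigl(\rho_\pi(\gamma,\id_H),\,\diam(\pi)\bigr)}.
\]
By Corollary \ref{compactball} each closed $d$-ball on the right is $\tau_w$-compact, and since $(B(H),\tau_w)$ is Hausdorff each is therefore $\tau_w$-closed. Consequently $X_\pi$ is a $\tau_w$-closed subset of any one of these $\tau_w$-compact balls, hence is itself $\tau_w$-compact. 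This intersection argument bypasses any appeal to Proposition \ref{heineborel} for the specific question of closedness, and is really what makes the proposition work in the present setting.
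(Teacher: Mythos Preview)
Your proof is correct and matches the paper's argument almost verbatim for $\Gamma$-invariance, $d$-convexity, $d$-boundedness and $d$-closedness. The only difference is in the $\tau_w$-compactness step: the paper cites Proposition~\ref{heineborel} to obtain $\tau_w$-closedness of $X_\pi$ and then combines this with $d$-boundedness, whereas you write $X_\pi$ explicitly as an intersection of closed $d$-balls and invoke Corollary~\ref{compactball} directly. Since Proposition~\ref{heineborel} is itself an immediate consequence of Corollary~\ref{compactball}, your route is a slight streamlining rather than a genuinely different idea; it has the minor advantage of making the Hausdorff-hence-closed step explicit and of not relying on sequential arguments.
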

\begin{proof}\frei
$\tau_w$-closedness follows from Proposition \ref{heineborel}. Since $X_\pi$ is also bounded, it is $\tau_w$- compact.\par
Now, let $x\in X_\pi$. Then for any $\gamma\in \Gamma$ one has (due to the invariance of $d$ under $\rho$) $ d(\gamma x,\rho(\gamma',\id_H))=d(x,\rho(\gamma ^{-1}\gamma',\id_H))\leq\diam\rho_\pi(\Gamma,\id_H)~\forall \gamma'\in \Gamma$ and therefore $\gamma x\in X_\pi$ proving that $X_\pi$ is a $\Gamma$-space.\par
$d$-boundedness and $d$-closedness are obvious and
\begin{align*}
 d(\eta(x,y,t),\rho_\pi(\gamma ,\id_H))&\leq (1-t)d(x,\rho_\pi(\gamma ,\id_H))+td(x,\rho_\pi(\gamma ,\id_H))\\
&\leq (1-t)\diam\rho_\pi(\Gamma,\id_H) +t\diam\rho_\pi(\Gamma,\id_H)\\
&=\diam\rho_\pi(\Gamma,\id_H)
\end{align*} implies the metric convexity of $X_\pi$.
\end{proof}
\subsection{Midpoints and circumradii}\parbox{1cm}{}\\
For a bounded subset $A$ of a Banach space, there exists a unique $r$, such that $A$ is contained in a ball of radius $r$. We show in this section, that this is also a property of $\Pp(H)$ with its metric topology coming from $d$.
\begin{defn}
 Let $U\subset \PP(H)$ be a $d$-bounded and $d$-convex set. \\We define the \textbf{circumradius} of $U$ to be $\inf\limits_{r\in\R}\left\{\exists x_r\in\Pp(H):~U\subset\overline{B(x_r,r)}\right\}$.\par
If for the circumradius $r^*$ of $U$ there is some $x^*$ such that $U\subset\overline{B(x^*,r^*)}$, we call $x^*$ a \textbf{midpoint} of $U$.
\end{defn}
\begin{rem}
 Let $U$ be bounded with diameter $l$. Then obviously $\frac l2\leq r\leq l$ for the circumradius $r$. Also, the midpoints of $U$ are obviously convex close to $U$.
\end{rem}
\begin{lem}
 Midpoints and circumradii exist for every bounded set $U$.
\end{lem}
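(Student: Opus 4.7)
The plan is to realise the circumradius as a minimum by a weak-operator-compactness argument on a sequence of approximating centres. Let $r^* := \inf\{r : \exists\, x \in \Pp(H),\ U \subset \overline{B(x,r)}\}$; this set is nonempty and $r^*$ is finite because any $u_0 \in U$ together with $r = \diam(U)$ gives $U \subset \overline{B(u_0, r)}$, so $r^* \leq \diam(U)$.

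Next I would pick a decreasing sequence of radii $r_n \searrow r^*$ together with candidate centres $x_n \in \Pp(H)$ satisfying $U \subset \overline{B(x_n, r_n)}$. Fixing any $u_0 \in U$ yields $d(x_n, u_0) \leq r_n$, so the sequence $(x_n)$ is $d$-bounded. Corollary \ref{compactball} then produces a subsequence $(x_{n_k})$ converging in $\tau_w$ to some $x^* \in B(H)$. Since $(x_{n_k})$ sits in the closed $d$-ball $\overline{B^d(u_0, R)}$ for some large $R$, and this ball is $\tau_w$-compact (hence $\tau_w$-closed) by Corollary \ref{compactball}, the limit automatically satisfies $x^* \in \Pp(H)$.

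To finish I would check that $U \subset \overline{B(x^*, r^*)}$ pointwise. For each $u \in U$ and each $\epsilon > 0$, eventually $r_{n_k} < r^* + \epsilon$, so $d(x_{n_k}, u) \leq r_{n_k} < r^* + \epsilon$ for all large $k$. Proposition \ref{heineborel} applied to the $\tau_w$-convergent sequence $(x_{n_k})$ and the point $u$ then gives $d(x^*, u) \leq r^* + \epsilon$. Letting $\epsilon \to 0$ yields $d(x^*, u) \leq r^*$ for every $u \in U$, so $x^*$ is a midpoint and $r^*$ is actually attained.

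The one genuinely nontrivial point is to verify that the weak-operator limit $x^*$ both stays inside $\Pp(H)$ (i.e.\ remains positive and invertible) and inherits the required Thompson-distance bound, since $d$ is not $\tau_w$-continuous and $\Pp(H)$ is not $\tau_w$-closed. This is exactly what Corollary \ref{compactball} and Proposition \ref{heineborel} are designed to handle, and once they are invoked the argument is the standard "extract a minimising subsequence and pass to the limit" recipe.
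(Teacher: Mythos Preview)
Your proof is correct and follows essentially the same route as the paper: choose approximating centres $x_n$ with radii $r_n\searrow r^*$, use $d$-boundedness and Corollary~\ref{compactball} to extract a $\tau_w$-convergent subsequence, then invoke Proposition~\ref{heineborel} to push the distance bound through the weak limit. The paper's own argument is just a terser version of exactly this, so there is nothing substantive to add.
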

\begin{proof}\frei\\
 Let $r$ be the circumradius of $U$. For $n\in\N$ define $r_n$ by $r_n=r+\frac1n$ and let $(x_n)_{n\in\N}$ be a corresponding sequence of points $x_n\in\Pp(H)$, such that $B(x_n,r_n)\supset U$.\par
Then, by applying Proposition \ref{heineborel}, we see that $\tau_w$-limit points of this sequence are midpoints.
\end{proof}
 A priori, the set of midpoints does not have to be a singleton. But the following holds:
\begin{lem}\label{midp}
 For any bounded set $U$ with circumradius $r$, the set $M(U)$ of midpoints is $d$-convex, $d$-closed and bounded.
\end{lem}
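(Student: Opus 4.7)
My plan is to reformulate $M(U)$ as an intersection of closed $d$-balls and then read off the three properties from Theorem \ref{respect} together with elementary estimates.

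The starting observation is that $x \in M(U)$ if and only if $d(x,u) \leq r$ for every $u \in U$, so
\begin{align*}
 M(U) = \bigcap_{u \in U} \overline{B^{d}(u,r)}.
\end{align*}
From this representation, $d$-closedness is immediate: each $\overline{B^{d}(u,r)}$ is $d$-closed by definition of the metric topology, and arbitrary intersections of closed sets are closed. Boundedness is equally direct: for any $x_1,x_2 \in M(U)$, pick any $u \in U$ (which exists since $U$ is nonempty as a bounded set with a well-defined circumradius) and apply the triangle inequality to obtain $d(x_1,x_2) \leq d(x_1,u)+d(u,x_2) \leq 2r$, so $\diam M(U) \leq 2r$.

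The only step requiring more than set-theoretic bookkeeping is $d$-convexity, and here I would invoke Theorem \ref{respect}. Given $x_1,x_2 \in M(U)$, $u \in U$ and $t \in [0,1]$, observe that the constant curve at $u$ is the geodesic $\eta(u,u,\cdot)$, so applying (\ref{gammaconv}) to the pair of geodesics $\eta(x_1,x_2,\cdot)$ and $\eta(u,u,\cdot)$ gives
\begin{align*}
 d\bigl(\eta(x_1,x_2,t),\,u\bigr) = d\bigl(\eta(x_1,x_2,t),\,\eta(u,u,t)\bigr) \leq t\,d(x_2,u) + (1-t)\,d(x_1,u) \leq tr + (1-t)r = r.
\end{align*}
Since this holds for every $u \in U$, we conclude $\eta(x_1,x_2,t) \in M(U)$, which is exactly $d$-convexity.

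There is no serious obstacle here; the only conceptual ingredient is that the convexity of $d$ along geodesics (Theorem \ref{respect}) forces closed balls to be $d$-convex, and hence so is any intersection of them. The mildest subtlety is that one has to remember to allow the degenerate geodesic $\eta(u,u,\cdot)$ in applying (\ref{gammaconv}), but this is automatic from the formula for $\eta$ given earlier in the paper.
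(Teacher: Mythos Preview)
Your proof is correct and follows essentially the same approach as the paper. The only cosmetic difference is that you phrase $d$-closedness via the representation $M(U)=\bigcap_{u\in U}\overline{B^d(u,r)}$, while the paper verifies it by a direct sequential argument; the convexity and boundedness arguments are identical (the paper simply writes ``by the convexity of $d$'' where you spell out the degenerate geodesic $\eta(u,u,\cdot)$).
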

\begin{proof}\frei\\
 Let $x_1$ and $x_2$ be two midpoints of $U$, then by the convexity of $d$ for any $t\in[0,1]$
\begin{align*}
 d\left(\eta(x_1,x_2,t),u\right)\leq td(x_2,u)+(1-t)d(x_1,u)\leq t\cdot r+(1-t)r=r~\forall u\in U
\end{align*}
hence $\eta(x_1,x_2,t)\in M(U)$, which shows $d$-convexity of $M(U)$.\par 
The boundedness of $M(U)$ is obvious as for any $y\in U$ and $x_1,x_2\in M(U)$ we have
$d(x_1,x_2)\leq d(x_1,y)+d(x_2,y)\leq 2r$.\par
Now, let $(x_n)_{n\in\N}$ be a sequence in $M(U)$ converging to $x$ with respect to $d$. Then
we get $d(x,u)\leq d(x,x_n)+d(x_n,u)\leq d(x,x_n)+r\limto\limits_{n\to\infty}r$, which shows that $x\in M(U)$. Hence $M(U)$ is a $d$-convex, $d$-bounded and $d$-closed set.
\end{proof}
\begin{rem} One cannot assume, that there is only one midpoint for arbitrary bounded sets as the following example shows:
\end{rem}
\begin{examp}\parbox{1cm}{}\\
 Let $\Gamma$ be a non-unitarisable group and $\pi:\Gamma\to B(H)$ be a uniformly bounded, non-unitarisable representation. Let us consider the orbit $X:=\rho_\pi(\Gamma,\id_H)$ of the identity with respect to the action of $\Gamma$ on $\PP(H)$ induced by this representation.\par 
Since $X$ is bounded (by the uniform boundedness of $\pi$), it has a circumradius, which we will denote by $r$.\par 
Now, from
\begin{align*}
x\in M(X)&~\Rightarrow~~~d(x,\gamma_1 \gamma_1 ^*)\leq r~\forall \gamma_1 \in \Gamma~~~\Leftrightarrow~~~ d\left(x,\gamma_2^{-1}\gamma_1 \gamma_1 ^*\left(\gamma_2^*\right)^{-1}\right)\leq r\forall \gamma_1 ,\gamma_2\in \Gamma\\
&~\Leftrightarrow ~~~d(\gamma_2x\gamma_2^*,\gamma_1\gamma_1^*)\leq r\forall \gamma_1,\gamma_2\in \Gamma~~~\Rightarrow ~~~\gamma_2x\gamma_2^*\in M(X)\forall \gamma_2\in \Gamma
\end{align*}
we see, that $\rho_\pi$ restricts to an action on $M(X)$. So, if there was only one midpoint, it would be fixed by the action of $\Gamma$ and hence by Lemma \ref{fixedunit}, this would imply unitarisability of $\pi$.
\end{examp}
The following example shows, that even in the linear case, the midpoints discussed above are counter-intuitive:
\begin{examp}\parbox{1cm}{}\\
 Consider the set $A=\{0,\delta_n|n\in\N\}\subset\ell^\infty(\N)$, where $\delta_n$ characteristic function of $n\in\N$.\par 
Now, the (algebraic) convex hull $A$ consists of all finitely supported functions with values in $[0,1]$ such that the $\ell^1$-norm is $1$.\par 
Closing this in the $\ell^\infty$-norm means adding those functions of $\ell^1$-norm $1$ taking values in $[0,1]$ and vanishing at infinity.\par 
This set $\bar A$ is obviously convex, $\ell^\infty$-closed and has ``inner'' circumradius $1$: 
\begin{align*}
f\in \bar A\Rightarrow \lim\limits_{x\to\infty}f(x)=0\Rightarrow \Vert f-f_n\Vert_\infty\limto\limits_{n\to\infty}1\end{align*}
so that every point in $\bar A$ has an ``opposite'' point within $U$. In other words: midpoints in $\bar A$ would imply the circumradius to be $1$. \par 
The circumradius ``from the outside'' is less: let $g $ be the constant function with value $\frac12$. Then for every $f\in \bar A,~\frac12\geq\left\vert f(x)-g(x)\right\vert$, hence $\Vert f-g\Vert_\infty=\frac12$.\par
In other words, the ``true midpoints'' (those realizing the smallest possible radius of a ball containing $\bar A$) do not have to be inside $\bar A$, even if $\bar A$ is convex!
\end{examp}
\begin{rem}
 In the sequel, compact will always refer to $\tau_w$-compact and convexity and boundedness are meant be $d$-convexity and $d$-boundedness respectively.
\end{rem}
\begin{lem}\label{sab}
For a compact set $A\subset\Pp(H)$ and a closed set $B\subset\Pp(H)$, there exist points $a\in A$, realizing the distance to $B$: $d(a,B)=d(A,B)$.\par
If, moreover, $B$ is compact, there are points $a\in A$ and $b\in B$, which realize the distance between $A$ and $B$: $d(a,b)=d(A,B)$.
\end{lem}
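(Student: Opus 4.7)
The plan is to realize the optimal $a^* \in A$ (and $b^* \in B$, for the second statement) as a $\tau_w$-limit of a minimizing sequence, and then to transfer the distance bound to the limit point using Proposition \ref{heineborel}. Set $\alpha := d(A, B)$ and choose sequences $(a_n) \subset A$, $(b_n) \subset B$ with $d(a_n, b_n) \to \alpha$. The $\tau_w$-compactness of $A$ yields, after passing to a subsequence, a point $a^* \in A$ with $a_n \to a^*$ in $\tau_w$.

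For the second statement, $B$ is also $\tau_w$-compact, and a further subsequence gives $b_n \to b^* \in B$ in $\tau_w$. It then suffices to prove $d(a^*, b^*) \leq \alpha$, which together with the trivial bound $d(a^*, b^*) \geq d(A, B) = \alpha$ yields equality. The idea here is to centralize: set $c_n := a_n^{-1/2} b_n a_n^{-1/2}$, so that $d(\id_H, c_n) = d(a_n, b_n) \to \alpha$, placing $c_n$ eventually in the $\tau_w$-compact closed ball $\overline{B^d(\id_H, \alpha + \epsilon)}$ by Lemma \ref{nextproof}. Extracting a $\tau_w$-limit $c^*$ of the $c_n$ within this ball yields $d(\id_H, c^*) \leq \alpha + \epsilon$; identifying $c^* = (a^*)^{-1/2} b^* (a^*)^{-1/2}$ via the isometric $\phi_a$-action then gives $d(a^*, b^*) \leq \alpha + \epsilon$, and letting $\epsilon \to 0$ completes the argument.

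For the first statement, $B$ is merely closed, so a $\tau_w$-convergent subsequence of the $(b_n)$ must be produced by hand. Since $d(a_n, b_n) < \alpha + 1$ eventually and $(a_n)$ is $\tau_w$-convergent, one argues, with a careful choice of minimizing sequence (for instance, by first fixing a reference point in $B$ and minimizing within $A$), that the $(b_n)$ are eventually confined to a single fixed $\tau_w$-compact closed $d$-ball (Corollary \ref{compactball}). A $\tau_w$-convergent subsequence $b_n \to b^*$ then exists, and closedness of $B$ ensures $b^* \in B$. The same centralization argument as above now delivers $d(a^*, B) \leq d(a^*, b^*) \leq \alpha$.

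The main obstacle is the identification $c^* = (a^*)^{-1/2} b^* (a^*)^{-1/2}$ of $\tau_w$-limits: neither inversion, nor the square root, nor operator multiplication are $\tau_w$-continuous on $\Pp(H)$, so the one-variable $\tau_w$-lower semicontinuity of $d$ recorded in Proposition \ref{heineborel} does not immediately give the joint statement required here. Handling this carefully will rely on the $d$-isometric nature of the maps $\phi_a$, on the $\tau_w$-compactness of closed $d$-balls around $\id_H$, and on repeated passage to subsequences in order to control the relevant operator products in the $\tau_w$-limit.
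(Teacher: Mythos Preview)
Your overall strategy coincides with the paper's: pick minimizing sequences, extract $\tau_w$-limits by compactness, and invoke Proposition~\ref{heineborel}. The paper's proof is a one-liner citing that proposition and does not spell out the passage from one-variable to joint $\tau_w$-lower semicontinuity of $d$; you are right to flag this as the nontrivial point, since Proposition~\ref{heineborel} as stated only controls $d(x_0,y)$ when a \emph{single} argument varies along a $\tau_w$-convergent sequence.

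However, your proposed centralisation $c_n=a_n^{-1/2}b_na_n^{-1/2}$ leads straight into the obstacle you yourself name: identifying the weak limit $c^*$ with $(a^*)^{-1/2}b^*(a^*)^{-1/2}$ would require $\tau_w$-continuity of products, inverses, and square roots, none of which holds, and your closing paragraph is a promissory note rather than an argument. There is a much simpler route that avoids this entirely. The spectral description in Lemma~\ref{nextproof}, transported by the isometry $\phi_{a^{1/2}}$, gives the order characterisation
\[
d(a,b)\le r\quad\Longleftrightarrow\quad e^{-r}a\;\le\; b\;\le\; e^{r}a,
\]
and the Loewner order is jointly $\tau_w$-closed: if $x_n\le y_n$ with $x_n\to x$, $y_n\to y$ weakly, then $\langle(y-x)u,u\rangle=\lim_n\langle(y_n-x_n)u,u\rangle\ge 0$, so $x\le y$. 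Thus $d(a_n,b_n)\le\alpha+\epsilon$ passes directly to $d(a^*,b^*)\le\alpha+\epsilon$ once both limits exist---no centralisation, no operator-product limits. This also settles your worry about confining $(b_n)$ in the first statement: the inequality $b_n\le e^{\alpha+1}a_n$ bounds $\Vert b_n\Vert$ (since the $\tau_w$-compact set $A$ is norm-bounded), so a weak subnet converges to some $b^*$, and $e^{-(\alpha+\epsilon)}a^*\le b^*$ forces $b^*\in\Pp(H)$. Membership $b^*\in B$ is where closedness of $B$ is used; in line with the section's convention that ``compact'' means $\tau_w$-compact, ``closed'' should be read as $\tau_w$-closed here.
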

\begin{proof}\frei\\
Again, this is an easy consequence of Proposition \ref{heineborel}, which assures, that weak limit points are at most as far away from some point, as the limit of the distances prescribes.
\end{proof}
\begin{nota}
 For a compact subset $A$ of $\Pp(H)$ and a closed subset $B\subset\Pp(H)$, we denote by $S(A,B)\subset A$, the set of points $a\in A$, for which
\begin{align*}
 d(a,B)=d(A,B)
\end{align*}
Those points exist by Lemma \ref{sab}.
\end{nota}
As a final result of this section, we observe, that the sets constructed above are "nice" $\Gamma$-subspaces of $\Pp(H)$:
\begin{prop}
 Let $A$ be a bounded $\Gamma$-subset of $\PP(H)$. Then the sets $M(A)$ and $S(A,B)$  are bounded, closed and convex $\Gamma$-subsets of $\PP(H)$.
\end{prop}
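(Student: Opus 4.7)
The plan splits into two essentially independent halves, one for $M(A)$ and one for $S(A,B)$; in both cases most of the work has already been done earlier in the section, and what remains is mainly $\Gamma$-invariance plus one nontrivial convexity check for $S(A,B)$.

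For $M(A)$, Lemma \ref{midp} already yields $d$-convexity, $d$-closedness, and $d$-boundedness, so only $\Gamma$-invariance needs to be verified. The idea will be: given $x\in M(A)$ realizing the circumradius $r$ and any $\gamma\in\Gamma$, since $\rho_\pi$ acts by $d$-isometries and $\gamma A=A$, every $a\in A$ can be written as $\gamma a'$ for some $a'\in A$, so $d(\gamma x,a)=d(x,a')\le r$. Hence $\gamma x\in M(A)$.

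For $S(A,B)$ to be a $\Gamma$-subset at all, the natural additional hypothesis is that $B$ is itself $d$-closed, $d$-convex, and $\Gamma$-invariant; under these assumptions boundedness is inherited from $S(A,B)\subseteq A$, closedness is a triangle-inequality computation since a $d$-limit $a$ of a sequence $(a_n)\subseteq S(A,B)$ satisfies $d(a,B)\le d(a,a_n)+d(A,B)\to d(A,B)$, and $\Gamma$-invariance follows at once from $d(\gamma a,B)=d(\gamma a,\gamma B)=d(a,B)=d(A,B)$ combined with $\gamma A=A$.

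The step requiring the most care, and the main obstacle, is $d$-convexity of $S(A,B)$. The plan is: given $a_1,a_2\in S(A,B)$ and $t\in[0,1]$, fix $\epsilon>0$ and pick $b_i\in B$ with $d(a_i,b_i)\le d(A,B)+\epsilon$; then invoke Theorem \ref{respect} to bound
\[
d(\eta(a_1,a_2,t),\eta(b_1,b_2,t))\le (1-t)d(a_1,b_1)+t\,d(a_2,b_2)\le d(A,B)+\epsilon,
\]
noting that $\eta(b_1,b_2,t)\in B$ by $d$-convexity of $B$ and $\eta(a_1,a_2,t)\in A$ by $d$-convexity of $A$. Letting $\epsilon\to 0$ gives $d(\eta(a_1,a_2,t),B)\le d(A,B)$, and the reverse inequality is automatic. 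The delicate point is that one cannot use the single-point estimate $d(\eta(a_1,a_2,t),b)\le(1-t)d(a_1,b)+t\,d(a_2,b)$, since the optimal $b$'s at the two endpoints can differ; the full bi-parameter convexity of Theorem \ref{respect} is what makes the argument go through.
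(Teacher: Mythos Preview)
Your proof is correct and largely parallels the paper's argument: for $M(A)$ you defer to Lemma~\ref{midp} and add the $\Gamma$-invariance just as the paper does (the paper actually records this computation in Example~1 rather than in Lemma~\ref{midp}, but the content is identical), and your treatment of boundedness, closedness, and $\Gamma$-invariance of $S(A,B)$ matches the paper line by line.

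The one genuine difference is the convexity of $S(A,B)$. The paper asserts that ``convexity always follows from the convexity of $x\mapsto d(x,y)$'', i.e.\ from the single-variable estimate $d(\eta(a_1,a_2,t),b)\le (1-t)d(a_1,b)+t\,d(a_2,b)$. As you correctly point out, this alone is not enough: an infimum of convex functions need not be convex, and the optimal $b$'s for $a_1$ and $a_2$ may differ. Your argument via the full bi-parameter inequality of Theorem~\ref{respect}, pairing $\eta(a_1,a_2,t)$ with $\eta(b_1,b_2,t)\in B$, is the right fix and is what actually proves that $a\mapsto d(a,B)$ is convex when $B$ is convex. You also make explicit the standing hypotheses the paper leaves implicit (that $A$ is $d$-convex and that $B$ is $d$-closed, $d$-convex, and $\Gamma$-invariant), without which neither the paper's nor your argument would go through. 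In this respect your write-up is more careful than the paper's.
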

\begin{proof}\frei\\
 Convexity does always follow from the convexity of $x\mapsto d(x,y)$, which implies that for two points $a$ and $b$ equally far away from a third point $c$, elements $\eta(a,b,t),~t\in[0,1]$ are at most as far away from $c$ as $a$ and $b$.\par
 In Lemma \ref{midp}, we have seen, that $M(A)$ is a $\Gamma$-space. Now, for a point $a$ in $S(A,B)$, we have $d(a,B)=d(A,B)$ and therefore
\begin{align*}
d(\gamma ^*a\gamma ,B)=\inf\limits_{b\in B}d(\gamma ^*a\gamma ,b)=\inf\limits_{b\in B}d(a,{\gamma ^{-1}}^*b\gamma ^{-1})=d(a,B)=d(A,B)
\end{align*}
showing, that $S(A,B)$ is a $\Gamma$-space, which is obviously bounded as a subset of $A$.\par 
Moreover, if $(x_n)_{n\in\N}$ is a $d$-convergent sequence in $S(A,B)$, the limit $x$ lies in $A$ ($A$ is closed!) and
\begin{align*}
 d(x,B)=\inf\limits_{b\in B}d(x,b)=\inf\limits_{b\in B}\lim\limits_{x\to\infty}d(x_n,b)=\inf\limits_{b\in B}\lim\limits_{x\to\infty}d(A,B)=d(A,B)
\end{align*}
Hence $x\in S(A,B)$ which implies that $S(A,B)$ is closed.
\end{proof}
\section{GCB-spaces and barycenters}
In the sequel, we will generalize the metric structure on $\Pp(H)$ to the concept a GCB-space. Also complete CAT(0)-spaces (broadly discussed by Martin Bridson in \cite{brid}) are GCB-spaces, which in turn are special cases of ``continuous midpoint spaces'' as discussed in \cite{fixed}. In those spaces, we will construct barycenters for finite sets and from this derive a fixed-point theorem for amenable groups.
\subsection{GCB-spaces}
\begin{defn}
 On a metric space $X$, such that there exist geodesics between any two points, a \textbf{geodesic bicombing} is a map $\eta:X\times X\times [0,1]\to  X$, such that
 \begin{itemize}
\item the map $\eta(x,y,\cdot):[0,1]\to X$ is a geodesic for any $(x,y)\in X\times X$
\item $\eta(y,x,t)=\eta(x,y,1-t)~\forall t\in I,~\forall x,y\in X$
\item $\eta(x,\eta(x,y,t),s)=\eta(x,y,ts)~\forall s,t\in I,~\forall x,y\in X$
\item $\lim\limits_{n\to\infty}\eta(x_n,y_n,t)=\eta\left(\lim\limits_{n\to\infty}x_n\lim\limits_{n\to\infty},y_n,t\right)$ for all $t\in I$ and convergent sequences $(x_n)$ and $(y_n)$.
\end{itemize}
Let $(X,d)$ and $(Y,d')$ be two spaces with a distinguished geodesic bicombing. Then, a map $f:(X,d)\to(Y,d')$ is said to be \textbf{bicombing respecting}, if
\begin{align*}f\circ\eta(x,y,t)=\eta(f(x),g(y),t)~\forall x,y\in X,~\forall t\in[0,1]\end{align*}
\end{defn}
\begin{defn}
 A \textbf{GCB-space} is a complete metric space $(X,d)$ together with a fixed geodesic bicombing $\eta$, such that the metric is convex with respect to this bicombing (i.e., equation (\ref{gammaconv}) holds).
\end{defn}
\begin{nota}
A any set $A\subset X$, $\conv(A)$ denotes the smallest closed and convex set containing $A$.
\end{nota}
On an arbitrary GCB-space, there is no such thing as a ``natural'' weak toplogy $\tau_w$, which has shown to be very fruitful in the case of $\Pp(H)$.\par 
 The following property will make up for this at some points
\begin{defn}
 We say, a GCB-space $X$ has \textbf{property (C)}, iff the following holds\par 
\fbox{\parbox{14cm}{Given a bounded sequence $(x_n)_{n\in\N}$ in $X$ and a family $\{f_\alpha|\alpha\in I\}$ of isometries of $X$ respecting the geodesic bicombing ($I$ is any index set) such that $d(x_n,f_\alpha(x_n))\to0$ for any $\alpha\in I$, there is some $x\in X$ convex close to the sequence $(x_n)_{n\in\N}$ such that \begin{center}$x=f_\alpha(x)~\forall\alpha\in I$\end{center}
}}\end{defn}
 The point $x$ in property (C) is not necessarily a $d$-limit point (for which the latter property is obvious):
\begin{lem}\label{ppisgbc}
 For a Hilbert space $H$ and with the definitions from above, $\PP(H)$ is a GCB-space with property (C) when considering only isometries $f_A:x\mapsto A^*xA$ for $A\in B(H)$.
\end{lem}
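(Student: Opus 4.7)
\frei\\
The plan is to address the GCB axioms and property (C) essentially independently, drawing on the material already assembled in Section 3.

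For the GCB structure, I would first dispatch completeness by invoking Theorem \ref{sametop} together with Lemma \ref{nextproof}: a $d$-Cauchy sequence is eventually trapped in a closed $d$-ball, hence is norm-bounded and norm-Cauchy under the equivalence of the two local bases, and its norm-limit remains invertible and positive because that $d$-ball coincides, via (\ref{specdefn2}), with a norm-ball intersected with $\PP(H)$ whose elements have spectrum bounded away from $0$. The first bicombing axiom and the metric convexity (\ref{gammaconv}) are already available from Theorem \ref{respect}. The remaining three bicombing axioms --- symmetry $\eta(y,x,t)=\eta(x,y,1-t)$, reparametrisation $\eta(x,\eta(x,y,t),s)=\eta(x,y,ts)$, and joint continuity in the endpoints --- I would verify by direct functional-calculus manipulation of the explicit formula $\eta(x,y,t)=x^{1/2}(x^{-1/2}yx^{-1/2})^t x^{1/2}$: reparametrisation is immediate from $(z^t)^s=z^{ts}$, and continuity follows from the joint norm-continuity of the square root and real powers on $\PP(H)$.

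For property (C), given a $d$-bounded sequence $(x_n)$ and a family $\{f_{A_\alpha}\}_{\alpha\in I}$ of isometries of the prescribed form with $d(x_n,f_{A_\alpha}(x_n))\to 0$ for every $\alpha$, the strategy is to produce a single candidate $x$ as a $\tau_w$-subsequential limit and then derive both convex-closeness and all the fixed-point identities from this one $x$. Corollary \ref{compactball} yields a subsequence $(x_{n_k})$ with $\tau_w$-limit $x\in\PP(H)$. For each fixed $m$, one has $d(x_{n_k},x_m)\leq D:=\diam\{x_n\}$ for all $k$, so Proposition \ref{heineborel}, applied with radii $D+\epsilon$ and then letting $\epsilon\downarrow 0$, delivers $d(x,x_m)\leq D$; thus $x$ is convex close to $(x_n)$.

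For the fixed-point identities, I would translate $d(x_n,f_{A_\alpha}(x_n))\to 0$ into a norm statement via Theorem \ref{sametop}: writing $y_n^{(\alpha)}:=x_n^{-1/2}f_{A_\alpha}(x_n)x_n^{-1/2}$, the assumption gives $\|\ln y_n^{(\alpha)}\|\to 0$ and hence $y_n^{(\alpha)}\to\id_H$ in norm, and since $(x_n)$ is norm-bounded, the identity $f_{A_\alpha}(x_n)-x_n=x_n^{1/2}(y_n^{(\alpha)}-\id_H)x_n^{1/2}$ forces $\|f_{A_\alpha}(x_n)-x_n\|\to 0$. Each $f_{A_\alpha}$ is $\tau_w$-continuous, so $f_{A_\alpha}(x_{n_k})\to f_{A_\alpha}(x)$ in $\tau_w$; combining this with the norm-nullness (hence $\tau_w$-nullness) of $x_{n_k}-f_{A_\alpha}(x_{n_k})$ yields $x=f_{A_\alpha}(x)$, and this holds simultaneously for every $\alpha\in I$ because the same subsequence $(x_{n_k})$ does the job for each $\alpha$. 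The main obstacle is conceptual rather than computational: it is the passage of the $d$-based convex-closeness inequality through a $\tau_w$-subsequential limit, which is exactly what Proposition \ref{heineborel} accomplishes --- without that bridge, the argument would have to be reconstructed by hand.
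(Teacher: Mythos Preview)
Your argument is correct and follows essentially the same route as the paper. The paper's proof begins with ``We only need to show property (C)'', silently delegating the GCB axioms to earlier material and \cite{schlicht}; you instead sketch completeness and the bicombing axioms explicitly, which is fine (the symmetry axiom $\eta(y,x,t)=\eta(x,y,1-t)$ is the known geometric-mean identity and not quite a one-liner, but it is standard). For property (C) the two proofs coincide: pass to a $\tau_w$-subsequential limit via Corollary \ref{compactball}, extract convex closeness from Proposition \ref{heineborel}, convert $d(x_n,f_A(x_n))\to 0$ into $\|A^*x_nA-x_n\|\to 0$ through Theorem \ref{sametop} and the uniform norm bound on $(x_n)$, and then pass the linear identity through the weak limit. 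Your intermediate step via $y_n^{(\alpha)}=x_n^{-1/2}f_{A_\alpha}(x_n)x_n^{-1/2}$ makes that conversion more transparent than the paper's one-line invocation of Theorem \ref{sametop}, and your explicit handling of the subsequence is cleaner than the paper's tacit identification of the full sequence with its weakly convergent subsequence.
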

\begin{proof}\frei\\
 We only need to show property (C).\par 
Given a bounded sequence $(x_n)_{n\in\N}$, $X:=\conv(\{x_n|n\in\N\})$ is convex, bounded and closed. By Proposition \ref{heineborel}, we find a $\tau_w$-limit point $x$ convex close to $X$. In generally, this point does not have to be a $d$-limit point.\par 
Now given a family $\mathcal F$ of isometries $f_A$ and the assumption in property (C) by definition of $d$, $d(x_n,f_A(x_n))\limto\limits_{n\to\infty}0 $ holds for any $f_A\in\mathcal F$ and by Theorem \ref{sametop}, this implies the convergence in norm: $\|A^*x_nA-x_n\|\limto\limits_{n\to\infty}0$.\par
Hence, we have for any $x,y\in H$
\begin{align*}
\left|\langle(A^*xA-x)u,v\rangle\right|&=\lim\limits_{n\to\infty}\left|\langle (A^*x_nA-x_n)u,v\rangle\right|\leq\lim\limits_{n\to\infty}\left\|A^*x_nA-x_n\right\|\|u\|\|v\|=0\end{align*}
by the fact, that $\lim\limits_{n\to\infty}x_n=x$ with respect to $\tau_w$.\par 
This was true for any $u,v\in H$ so that $f_A(x)=x$ for arbitrary $f_A\in\mathcal F$.
\end{proof}
\begin{examp}\parbox{1cm}{}\\
 For a reflexive Banachspace $(X,\|\cdot\|)$, a geodesic bicombing can be defined by
\begin{align*}\eta(x,y,t)=tx+(1-t)y,~t\in I,~x,y\in X\end{align*}
The triangle inequality yields convexity of this bicombing and weak limit points comply with property (C). Hence, $X$ is a GCB-space with property (C).
\end{examp}
Complete CAT(0) spaces are called Hadamard spaces, they form another class of GCB-spaces (they are easily seen to be uniquely geodesic. Hence they carry a natural geodesic bicombing). Compact, closed subspaces will also have property (C).
\begin{rem}
 Obviously, points in $\conv(A)$ are convex close to $A$ and in all the examples above apart from $\Pp(H)$, we may find the point $x$ from property (C) to lie inside the closed convex hull $\conv\{x_n,n\in\N\}$.
\end{rem}
\subsection{Barycenters of finite sets}
\begin{nota}
 We will denote by $[n]$ the set $\{1,..,n\}\subset\N$.
\end{nota}\begin{defn}
 We define the \textbf{$n$-tuple space} of a topological space $X$ to be
\begin{align*}X_{(n)}=\left.\prod\limits_{i\in[n]}X\right/S_n\end{align*}
the space of unordered $n$-tuples. ($S_n$ denotes the symmetric group on $n$ elements)\par 
Elements in the $n$-tuple-space are denoted by $(x_1,..,x_n)$ or by $(x_i,i\in[n])$.
\end{defn}
\begin{rem}
By defining 
\begin{align*}
 d_{(n)}:X_{(n)}\times X_{(n)}\to\R,~\left((x_i,i\in[n]),(y_i,i\in[n])\right)\mapsto\min\limits_{\sigma\in S_n}\frac1n\sum\limits_{i\in[n]}d\left(x_i,y_{\sigma(i)}\right)
\end{align*}
$X_{(n)}$ is turned into a complete metric space.
\end{rem}
\begin{rem}
 To any $\phi:X\to X$, we set $\tilde\phi:X_{(n)}\to X_{(n)},~\left(x_i,i\in[n]\right)\mapsto\left(\phi(x_i),i\in[n]\right)$.
\end{rem}
\begin{defn}
A map $b_n:X_{(n)}\to X$ is called a \textbf{barycenter map}, if
\begin{enumerate}
\item  $b_n((x_1,..,x_n))\in\conv(\{x_1,..,x_n\})$
\item $b$ is equivariant with respect to bicombing-respecting maps $\phi:X\to X$,	
\begin{center}
 i.e.\hspace{0.5cm}$\xymatrix{
X_{(n)}\ar[r]^{\tilde\phi}&X_{(n)}\ar[d]^b\\
X\ar@{<-}[u]^b&X\ar@{<-}[l]^\phi
}$\hspace{0.5cm}commutes
\end{center}
\end{enumerate}
\end{defn}
\begin{defn}
 The image of an $n$-tuple by a barycenter map is called a \textbf{barycenter} of this tuple.
\end{defn}
\begin{rem}
  Even though the barycenter map is a map of tuples, we will frequently speak of ``barycenters of a subset of $X$''. A set $\{x_1,..,x_n\}$ is then identified with the obvious corresponding tuple $(x_i,i\in[n])$.
 Vice versa, one associates to an $n$-tuple over $X$ the subset containing all points from the tuple.\par
 Therefore, it is possible, to associate to an $n$-tuple $A$ the closed convex hull $\conv(A)\subset X$ or the diameter $\diam(A)$ of $A$.\par 
In particular, $x\in(x_1,..,x_n)$ says that there is some $i\in[n]$ such that $x=x_i$.
\end{rem}\begin{rem}
 For $n\in\{1,2\}$, there are obvious choices for barycenter maps:
\begin{align*}b_1:&X_{(1)}\to X,(x)\mapsto x\hspace{1cm}\textrm{and}\hspace{1cm}b_2:~X_{(2)}\to X, (x,y)\mapsto \eta\left(x,y,\frac12\right)\end{align*}
\end{rem}
In fact, there was no choice: $x$ is the only point in the closed convex hull of $x$ and exchanging $x_1$ and $x_2$ in the definition of $b_2$ had to leave the result invariant.
\begin{defn}
A map $f:(X,d)\to(Y,d')$ is \textbf{non-expansive}, if $d'(f(x),f(y))\leq d(x,y)$ for any $x,y\in X$. Those maps are obviously continuous.\end{defn}
\begin{thm}\label{2finish}
For any GCB-space $X$ and any $n\in\N$ there exists a non-expansive barycenter map $b_n:X_{(n)}\to X$.
\end{thm}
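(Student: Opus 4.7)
The plan is to proceed by induction on $n$, using the explicitly given $b_1$ and $b_2$ as base cases; $b_2 = \eta(\cdot,\cdot,\tfrac12)$ is non-expansive immediately from the Busemann convexity (\ref{gammaconv}) at $t=\tfrac12$. For $n \geq 3$, assuming a non-expansive, equivariant $b_{n-1}$ has already been constructed, I will define a self-map $\Phi \colon X_{(n)} \to X_{(n)}$ by $\Phi(x_1,\dots,x_n) := (y_1,\dots,y_n)$, where $y_i := b_{n-1}(x_1,\dots,\widehat{x_i},\dots,x_n)$ is the barycenter of the $(n-1)$-tuple obtained by deleting $x_i$. The symmetry of the construction makes $\Phi$ well-defined on the unordered space $X_{(n)}$. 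The desired $b_n(X)$ will then be read off as the diagonal limit of the iterates $\Phi^k(X)$.

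The crucial quantitative input will be a \emph{tuple-diameter contraction} $\diam(\Phi X) \leq \frac{1}{n-1}\diam(X)$. Indeed, the two $(n-1)$-tuples producing $y_i$ and $y_j$ share $n-2$ entries and differ only in the swapped pair $(x_i,x_j)$, so applying non-expansiveness of $b_{n-1}$ with the matching that pins the shared entries yields $d(y_i,y_j) \leq \frac{1}{n-1}d(x_i,x_j)$. A directly analogous argument (take the optimal matching for $d_{(n)}(X,Y)$ and restrict it to each pair of $(n-1)$-subtuples) will give that $\Phi$ itself is non-expansive on $(X_{(n)}, d_{(n)})$.

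Convergence of the iterates will be controlled as follows. Each coordinate of $\Phi X$ lies in $\conv(\{x_j : j\neq i\}) \subseteq \conv(X)$, and the basic convexity inequality $d(c,\eta(a,b,t)) \leq (1-t)d(c,a) + t\,d(c,b)$ (a consequence of (\ref{gammaconv}) with one argument held constant, using $\eta(c,c,\cdot)\equiv c$) iterates to $d(c,y) \leq \max_j d(c,a_j)$ for $y$ in the closed convex hull of $\{a_j\}$. This bounds $d_{(n)}(\Phi^k X, \Phi^{k+1} X) \leq \diam(\Phi^k X) \leq (n-1)^{-k}\diam(X)$, a geometric series summable for $n\geq 3$; hence $(\Phi^k X)$ is Cauchy in the complete space $X_{(n)}$ and converges to some $Y_\infty$ of diameter $0$, i.e.\ a diagonal tuple $(b,\dots,b)$. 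Set $b_n(X):=b$.

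The three required properties will then pass to the limit: (i) every entry of every iterate $\Phi^k X$ lies in $\conv(X)$, and this closed set contains $b$; (ii) for any bicombing-respecting $\phi$, equivariance of $b_{n-1}$ gives $\Phi \circ \tilde\phi = \tilde\phi \circ \Phi$, which descends to limits; (iii) non-expansiveness of $b_n$ follows from non-expansiveness of $\Phi$ together with the fact that $d_{(n)}$ restricts to $d$ on diagonal tuples. The main obstacle will be securing the contraction factor $\frac{1}{n-1}$ through the correct choice of matching in the induction step; once this is established, completeness of $X_{(n)}$ closes the argument, and no appeal to property (C) is required.
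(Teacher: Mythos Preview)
Your proposal is correct and follows essentially the same approach as the paper: the paper's auxiliary map $\tilde b_{n+1}$ is exactly your $\Phi$ (up to the index shift $n\leftrightarrow n+1$), and Lemma~\ref{technical} establishes precisely your two key ingredients, the diameter contraction $\diam(\Phi X)\le\frac{1}{n-1}\diam(X)$ and the non-expansiveness of $\Phi$, by the same matchings you describe. The only cosmetic difference is that the paper phrases convergence via the nested sequence $\bigl(\conv(\Phi^k X)\bigr)_k$ of shrinking diameter rather than your explicit Cauchy estimate $d_{(n)}(\Phi^k X,\Phi^{k+1}X)\le (n-1)^{-k}\diam(X)$; both arguments are equivalent and rest on the same diameter bound.
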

\begin{proof}\frei\\
We proceed by induction assuming that we have already defined a non-expansive $n$-barycenter map $b_n$. (The initial step $n=2$ is obvious and follows from the convexity of the geodesic bicombing)\par 
Let us define the following auxiliary map:
\begin{align*}
  \tilde b_{n+1}:&\hspace{0.8cm}X_{(n+1)}\hspace{0.7cm}\to X_{(n+1)}\\
&(x_i,i\in[n+1])\mapsto \left(b_n\left(x_j,i\neq j\in[n+1]\right),i\in[n+1]\right)
\end{align*}
This map is obviously well-defined and equivariant with respect to bicombing-respecting maps.
The proof will follow from the following lemma:
 \begin{lem}\label{technical}
 $\tilde b_{n+1}$ has the following properties:
\begin{enumerate}
  \item $\diam\left(\tilde b_{n+1}(A)\right)\leq\frac 1{n}\diam A~\forall A\in X_{(n+1)}$
\item $\tilde b_{n+1}:X_{(n+1)}\to X_{(n+1)}$ is non-expansive.
\end{enumerate}\end{lem}
Let us first see, how the theorem follows:\par 
Using the first property, one directly sees that $\left(\conv\left((b_{n+1})^k(A)\right)\right)_{k\in\N}$ is a nested sequence of convex and closed sets with diameter $\frac1{n^k}\diam(A)$. and using the completeness of $X_{(n)}$, one immediately gets that the limit map
\begin{align*}
 \hat b_{n+1}:X_{(n+1)}\to X_{(n+1)},~(x_i,i\in[n+1])\mapsto\lim\limits_{k\to\infty}(\tilde b_{n+1})^k\left(x_i,i\in[n+1]\right)
\end{align*}
is well-defined, and maps every $(n+1)$-tuple to a tuple of diameter 0. \par 
Also, as a limit of equivariant and non-expansive maps, it is itself equivariant and non-expansive (and in particular continuous). Hence we have $\hat b_{n+1}(A)=(x(A),..,x(A))$ for some $x(A)\in\conv(A)$ and define $b_{n+1}(A):=x(A)$. \par 
Now, $b_{n+1}$ is well-defined and equivariant with respect to maps respecting the bicombing (as $\tilde b_{n+1}$ and hence $\hat b_{n+1}$ is) and from
\begin{align*}
&d\left(b_{n+1}((x_i,i\in[n+1])),b_{n+1}((y_i,i\in[n+1]))\right)\\
&\hspace{2cm}=d_{(n+1)}\left(\lim\limits_{k\to\infty}\tilde b_{n+1}^k(x_i,i\in[n+1]),\lim\limits_{k\to\infty}\tilde b_{n+1}^k(y_i,i\in[n+1])\right)\\
&\hspace{2cm}=\lim\limits_{k\to\infty}d_{(n+1)}\left(\tilde b_{n+1}^k((x_i,i\in[n+1])),\tilde b_{n+1}^k((y_i,i\in[n+1]))\right)\\
&\hspace{2cm}\leq\lim\limits_{k\to\infty}d_{(n+1)}\left((x_i,i\in[n+1]),(y_i,i\in[n+1])\right)\\
&\hspace{2cm}=d_{(n+1)}\left((x_i,i\in[n+1]),(y_i,i\in[n+1])\right)
\end{align*}
one sees, that $b_{n+1}$ is non-expansive.\end{proof}
\begin{proof}\textit{(of Lemma \ref{technical})}\\
We show both properties individually:
  \begin{enumerate}
    \item Let $A=(x_i,i\in[n+1])\in X_{(n+1)}$ and $y_1\neq y_2\in\tilde b_{n+1}(A)$ be arbitrary. Then, by definition of $\tilde b$ there are $j\neq k\in[n+1]$ such that $y_1=b_n\left((x_1,..,\hat x_j,..,x_{n+1})\right)$ and $y_2=b_n\left((x_1,..,\hat x_k,..,x_{n+1})\right)$ and one easily sees by using the non-expansiveness of $b_n$ and the definition of $d_{(n)}$, that
\begin{align*}
  d(y_1,y_2)&=d\left(b_n\left((x_1,..,\hat x_j,..,x_{n+1})\right),b_n\left((x_1,..,\hat x_k,..,x_{n+1})\right)\right)\\
&\leq d_{(n)}\left((x_1,..,\hat x_j,..,x_{n+1}),(x_1,..,\hat x_k,..,x_{n+1})\right)\\
&\leq\frac1n\left(d(x_k,x_j)+\sum\limits_{i\in[n+1]\setminus\{j,k\}}d(x_i,x_i)\right)=\frac1n{d(x_k,x_j)} \leq\frac{\diam A}n
\end{align*}
Since this was true for arbitrary $y_1$ and $y_2\in\tilde b_{n+1}(A)$, $\diam\left(\tilde b_{n+1}(A)\right)\leq\frac 1{n}\diam A$.
\item Let $(x_i,i\in[n+1])$ and $(y_i,i\in[n+1])$ be arbitrary $(n+1)$-tuples. Choose the labelling in such a way that
\begin{align*} 
  d_{(n+1)}\left((x_i,i\in[n+1]),(y_i,i\in[n+1])\right)&=\frac1{n+1}\sum\limits_{i\in[n+1]}d(x_i,y_i)
\end{align*}
To abbreviate notation, let $U_k:=[n+1]\setminus\{k\}$ and $\bar x_k=(x_i,i\in U_k)\in X_{(n)}$. Also, let $\Perm(U_k)$ denote the group of all permutations of $U_k$.\par
Then, by using the non-expansiveness of $b_n$ (by induction), we see
\begin{align*}
  &d_{(n+1)}\left(\tilde b_{n+1}((x_i,i\in[n+1])),\tilde b_{n+1}((y_i,i\in[n+1]))\right)\\
&\hspace{2cm}=d_{(n+1)}\left(\left(b_n\left(\bar x_k\right),k\in[n+1]\right),\left(b_n\left(\bar y_k\right),k\in[n+1]\right)\right)\\
&\hspace{2cm}=\min\limits_{\sigma\in S_n}\frac 1{n+1}\sum\limits_{k\in[n+1]}d\left(b_n\left(\bar x_k\right),b_n\left(\bar y_{\sigma(k)}\right)\right)\\
&\hspace{2cm}\leq\frac 1{n+1}\sum\limits_{k\in[n+1]}d\left(b_n\left(\bar x_k\right),b_n\left(\bar y_k\right)\right)\leq\frac 1{n+1}\sum\limits_{k\in[n+1]}d_{(n)}\left(\bar x_k,\bar y_k\right)\\
&\hspace{2cm}=\frac 1{n+1}\sum\limits_{k\in[n+1]}\min\limits_{\tau\in\Perm(U_k)}\frac1n\sum\limits_{j\in U_k}d\left(x_j,y_{\tau(j)}\right)\leq\frac 1{n+1}\sum\limits_{k\in[n+1]}\frac1n\sum\limits_{j\in U_k}d\left(x_j,y_j\right)\\
&\hspace{2cm}=\frac1{n+1}\sum\limits_{k\in[n+1]}d(x_k,y_k)=d_{(n+1)}\left((x_i,i\in[n+1]),(y_i,i\in[n+1])\right)
\end{align*}
  \end{enumerate}
\end{proof}\begin{nota}$A\dcup B$ denotes the disjoint union of $A$ and $B$.\end{nota}
\begin{cor}\label{foln}
Let $A,B,C\subset X$ be finite subsets of a GCB-space with $\vert B\vert=\vert C\vert$. Then, for the corresponding barycenter map $b$: $d\left(b\left(A\dcup B\right),b\left(A\dcup C\right)\right)\leq\frac{|B|}{\vert A\vert+\vert B\vert}\diam\left(B\dcup C\right)$.
\end{cor}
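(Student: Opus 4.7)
The plan is to reduce everything to the non-expansiveness of the barycenter map $b=b_n$ established in Theorem \ref{2finish}, where $n:=|A|+|B|=|A|+|C|$, and then to exhibit a single explicit permutation in the definition of $d_{(n)}$ whose cost matches the desired bound.

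First I set $n=|A|+|B|=|A|+|C|$ so that both $A\sqcup B$ and $A\sqcup C$ are naturally elements of the $n$-tuple space $X_{(n)}$. By Theorem \ref{2finish}, the barycenter map $b=b_n:X_{(n)}\to X$ is non-expansive, so
\begin{align*}
d\bigl(b(A\sqcup B),b(A\sqcup C)\bigr)\le d_{(n)}\bigl(A\sqcup B,\,A\sqcup C\bigr).
\end{align*}

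Next I bound the right-hand side by plugging in a convenient permutation rather than taking the minimum. Enumerate $A=\{a_1,\dots,a_{|A|}\}$ and choose any bijections $B=\{b_1,\dots,b_{|B|}\}$, $C=\{c_1,\dots,c_{|B|}\}$. Consider the permutation $\sigma\in S_n$ that matches each $a_i$ in the first tuple with the same $a_i$ in the second tuple, and matches $b_j$ with $c_j$. Since $d(a_i,a_i)=0$ and $d(b_j,c_j)\le\diam(B\sqcup C)$, the definition of $d_{(n)}$ gives
\begin{align*}
d_{(n)}\bigl(A\sqcup B,\,A\sqcup C\bigr)\le\frac 1n\left(\sum_{i=1}^{|A|}d(a_i,a_i)+\sum_{j=1}^{|B|}d(b_j,c_j)\right)\le\frac{|B|}{|A|+|B|}\,\diam(B\sqcup C).
\end{align*}
Combining the two inequalities yields exactly the claimed bound.

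There is essentially no obstacle here: the only thing to be careful about is the normalization $\tfrac1n$ in the definition of $d_{(n)}$ (which is why the coefficient is $\tfrac{|B|}{|A|+|B|}$ rather than $\tfrac{1}{|B|}$ or just $|B|$) and the fact that $\sigma$ is a permutation of an \emph{unordered} tuple, so we may freely pair the $A$-part with itself. The non-expansive barycenter map does all the real work; the corollary is an application of that property to a specifically chosen coupling of the two tuples.
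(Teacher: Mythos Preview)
Your proof is correct and essentially identical to the paper's: both apply the non-expansiveness of $b_n$ from Theorem \ref{2finish} and then bound $d_{(n)}(A\sqcup B,A\sqcup C)$ by choosing the single bijection that is the identity on $A$ and an arbitrary bijection $B\to C$, estimating each of the $|B|$ remaining terms by $\diam(B\sqcup C)$.
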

\begin{proof}\frei\\
Choose some bijection $\sigma':B\to C$ and define $\sigma: A\dcup B\to A\dcup C$ to be the identity on $A$ and $\sigma'$ otherwise.\par
Then, since the barycenter map is non-expansive, one sees (here $G$ shall denote the group of all bijections $A\dcup B\to A\dcup C$ and $n=\vert A\vert+\vert B\vert$)
\begin{align*}
d\left(b\left(A\dcup B\right),b\left(A\dcup C\right)\right)&\leq d_n\left(A\dcup B,A\dcup C\right)=\min\limits_{\phi\in G}\frac1n\sum\limits_{x\in A\dcup B}d\left(x,\phi(x)\right)\\
&\leq\frac1n\sum\limits_{x\in A\dcup B}d\left(x,\sigma(x)\right)=\frac1n\left(\sum\limits_{x_i\in A}d\left(x_i,x_i\right)+\sum\limits_{y_i\in B}d\left(y_{i},\sigma'\left(y_i\right)\right)\right)\\
&=\frac1n\sum\limits_{y_i\in B}d\left(y_i,\sigma'\left(y_i\right)\right)\leq\frac{|B|}n\diam\left(B\dcup C\right)
\end{align*}
\end{proof}
One could wonder, whether the barycenter maps defined above respect the GCB-structure in the sense that they send tuples of geodesics to a geodesic.\par 
The following propopsition shows, that this is true for any $n\in\N$, if it holds for $n=2$.
\begin{prop}\label{baryisom}
Let $X$ be a GCB space such that for every $a,b,c,d\in X$ and $t\in I$ we have $b_2(\eta(a,b,t),\eta(c,d,t))=\eta(b_2(a,c),b_2(b,d),t)$.\par
Then, also $b_n\left(\left(\eta(x_i,y_i,t),~i\in[n]\right)\right)=\eta(b_n(A),b_n(B),t)$ holds for any $n\in\N$, where \\$A=(x_i,~i\in[n])$ and $B=(y_i,~i\in[n])\in X_{(n)}$.
\end{prop}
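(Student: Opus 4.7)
The plan is to induct on $n$, with the hypothesis of the proposition serving as the base case $n=2$ (the case $n=1$ is trivial since $b_1$ is the identity).

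For the inductive step, I assume the statement for $b_n$ and derive it for $b_{n+1}$ by unpacking the construction used in Theorem \ref{2finish}: recall that $b_{n+1}$ is obtained as the common value of the constant $(n+1)$-tuple $\lim_{k\to\infty}\tilde b_{n+1}^k$. The key intermediate claim is that $\tilde b_{n+1}$ is already geodesic-compatible entry by entry. More precisely, fix representatives $X=(x_i,i\in[n+1])$ and $Y=(y_i,i\in[n+1])$, and set $Z=(\eta(x_i,y_i,t),i\in[n+1])$. Then for each $k\in[n+1]$, the $k$-th component of $\tilde b_{n+1}(Z)$ equals $b_n\bigl(\eta(x_i,y_i,t),i\neq k\bigr)$, which by the inductive hypothesis applied to the $n$-tuple indexed by $[n+1]\setminus\{k\}$ equals $\eta\bigl(b_n(x_i,i\neq k),b_n(y_i,i\neq k),t\bigr)$. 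But these last two entries are exactly the $k$-th components of $\tilde b_{n+1}(X)$ and $\tilde b_{n+1}(Y)$. Thus, with matching labellings, $\tilde b_{n+1}(Z)$ is the componentwise $\eta(\cdot,\cdot,t)$ of $\tilde b_{n+1}(X)$ and $\tilde b_{n+1}(Y)$.

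Iterating this observation gives that, for every $k\in\N$, $\tilde b_{n+1}^k(Z)$ is the componentwise $\eta(\cdot,\cdot,t)$ of $\tilde b_{n+1}^k(X)$ and $\tilde b_{n+1}^k(Y)$. Taking $k\to\infty$ using the continuity property of the bicombing $\eta$ (the fourth bullet in the definition of a geodesic bicombing), together with the fact from Theorem \ref{2finish} that these iterates converge to the constant tuples $(b_{n+1}(Z),\ldots)$, $(b_{n+1}(X),\ldots)$, and $(b_{n+1}(Y),\ldots)$ respectively, yields $b_{n+1}(Z)=\eta(b_{n+1}(X),b_{n+1}(Y),t)$, which is exactly the claim for $n+1$.

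The only real subtlety is bookkeeping: tuples in $X_{(n+1)}$ are unordered, so one has to fix consistent labellings of $X$, $Y$, $Z$, and of their images under $\tilde b_{n+1}$, so that the componentwise geodesic identification is literally true and not merely up to permutation. This is routine once one lifts to $\prod_{i\in[n+1]}X$ and checks that the induced labellings match on each application of $\tilde b_{n+1}$. Beyond that, all the analytic content is already packaged in the inductive hypothesis and in the continuity axiom for $\eta$; no further estimates are needed.
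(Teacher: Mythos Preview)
Your proof is correct and follows essentially the same approach as the paper's: both argue by induction on $n$, establish that the auxiliary map $\tilde b_{n+1}$ sends componentwise geodesic tuples to componentwise geodesic tuples via the inductive hypothesis, and then pass to the limit using the continuity axiom for $\eta$. Your presentation is in fact cleaner, since you explicitly flag the bookkeeping issue with unordered tuples, which the paper glosses over.
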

\begin{proof}\frei\\
We prove this by induction the first step $n=2$ being assumed.\par 
Then, by construction, the barycenter $b_{n}\left(\left(\eta(x_i,y_i,t),~i\in[n]\right)\right)$ is the $d$-limit of the sequence $(z_i(t))_{i\in\N}=z_i^{(1)}(t)$, where
\begin{align*}
z_i^{(k)}(t)=\left\{\begin{array}{ll}
\eta(x_k,y_k,t)&i=1\\
b_{n-1}\left(\left(z_{i-1}^{(l)}(t),~k\neq l\in[n]\right)\right)&n\neq 1
\end{array}\right.
\end{align*}
By induction, we know, that $z_i(t)=\eta(z_i(0),z_i(1),t)$.\par 
Using the fact, that $z_i(0)$ and $z_i(1)$ converge to the barycenters $b_n((x_i[i\in[n]))$ and $b_n((y_i,i\in[n]))$ respectively and the continuity of the geodesic bicombing, we see, that
\begin{align*}
 b_n\left(\left(\eta(x_i,y_i,t),~i\in[n]\right)\right)&=\lim\limits_{i\to\infty}z_i(t)
 =\lim\limits_{i\to\infty}\eta(z_i(0),z_i(1),t)
=\eta\left(\lim\limits_{i\to\infty}z_i(0),\lim\limits_{i\to\infty}z_i(1),t\right)\\&=\eta\left(b_n((x_i,i\in[n]]),b_n([y_i,i\in[n])),t\right)
\end{align*}
which proves the claim.
\end{proof}
\subsection{Fixed points and amenable groups}
In this section, we will prove that bicombing-respecting actions by discrete countable groups on GCB spaces with property (C) have fixed points convex close to any bounded orbit, if the action restricted to the orbit is amenable.\par 
Amenable actions by a group $\Gamma$ on a space $X$ are normally defined as actions allowing for $\Gamma$-invariant means (see \cite{glasm} for example). As proven for examble by Rosenblatt in \cite{ros}, this is equivalent to the following definition:
\begin{defn}
 We say, that an action of a countable discrete group $\Gamma$ on a set $X$ is called an \textbf{amenable action}, if for any finite $S\subset \Gamma$ and any $\epsilon>0$, one can find a finite set $A\subset X$, such that $|A\Delta \gamma A|<\epsilon|A|$ for all $\gamma \in S$.\par
A group $\Gamma$ is  an \textbf{amenable group}, if the action of $\Gamma$ on itself by multiplication on the left is amenable.
\end{defn}
\begin{rem}
 Amenable groups always act amenably.
\end{rem}

\begin{defn}
 Let $\Gamma$ act amenably on $X$. Since $\Gamma$ is countable, it is an ascending union of finite sets $U_n$. Let $\epsilon_n=\frac1n$, then the corresponding sequence $\left(F_n\right)_{n\in\N}$ of subsets of $\Gamma$ such that $|F_n\Delta \gamma F_n|<\epsilon_n|F_n|$ for all $\gamma \in U_n$ is called \textbf{F\o lner sequence} for this action.
\end{defn}
\begin{thm}\label{existfixed}
Let $X$ be a GCB-Space with Property (C) and $\Gamma$ be a group acting on $X$ bicombing respectingly, such that the action allows for at least one bounded orbit $\Gamma x$ and restricts to an amenable action on this orbit. Then there is a fixed point $x$ convex close to $\conv(\Gamma x)$.
 \end{thm}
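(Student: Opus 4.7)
The strategy is to realise a $\Gamma$-fixed point as the F\o lner-average of orbit barycenters, caught by Property~(C). Let $(F_n)_{n\in\N}$ be a F\o lner sequence for the amenable action of $\Gamma$ on $\Gamma x$, and set
\[
  y_n \;:=\; b_{|F_n|}\bigl((\gamma x)_{\gamma \in F_n}\bigr) \;\in\; \conv(F_n x) \;\subseteq\; \conv(\Gamma x).
\]
Since the orbit $\Gamma x$, and hence $\conv(\Gamma x)$, is bounded, $(y_n)$ is a bounded sequence in $X$.

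The first step is to show that $(y_n)$ is asymptotically $\Gamma$-fixed. Because $\rho(\gamma)$ respects the bicombing, the equivariance of the barycenter map yields $\rho(\gamma) y_n = b_{|F_n|}\bigl((\beta x)_{\beta \in \gamma F_n}\bigr)$. Writing $F_n = (F_n \cap \gamma F_n) \sqcup (F_n \setminus \gamma F_n)$ and $\gamma F_n = (F_n \cap \gamma F_n) \sqcup (\gamma F_n \setminus F_n)$, with $|F_n \setminus \gamma F_n| = |\gamma F_n \setminus F_n|$ since $|F_n| = |\gamma F_n|$, Corollary~\ref{foln} applies and gives
\[
  d\bigl(y_n,\rho(\gamma) y_n\bigr) \;\leq\; \frac{|F_n \setminus \gamma F_n|}{|F_n|}\, \diam(\Gamma x),
\]
which tends to $0$ as $n \to \infty$ for every fixed $\gamma \in \Gamma$, by the F\o lner property.

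Next, Property~(C) applied to $(y_n)$ together with the family of bicombing-respecting isometries $\{\rho(\gamma) : \gamma \in \Gamma\}$ produces a point $y \in X$ that is convex close to $(y_n)$ and satisfies $\rho(\gamma) y = y$ for every $\gamma$. For the final step, one upgrades ``convex close to the approximating sequence'' into ``convex close to $\conv(\Gamma x)$''. The key ingredients are: each $y_n$ lies inside $\conv(\Gamma x)$, so $\diam(\{y_n\}) \leq \diam(\Gamma x)$; metric balls in a GCB-space are convex, as an immediate consequence of the convexity of the metric, so each $y_n$ is itself convex close to the whole orbit; and the $\Gamma$-invariance of $y$ forces $d(y,\gamma x) = d(y,x)$ to be constant in $\gamma$, so that by convexity of $d(y,\cdot)$ along geodesics, the estimate on the whole of $\conv(\Gamma x)$ reduces to the single value $d(y,x)$.

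I expect the main obstacle to be precisely this last upgrading step. Property~(C) in its abstract form only bounds $d(y,y_n)$ by $\diam(\{y_k\}_k)$ rather than by a quantity involving $\Gamma x$ directly, so one has to simultaneously exploit the $\Gamma$-invariance of $y$, the positioning of the $y_n$ inside $\conv(\Gamma x)$, and the convexity of balls in a GCB-space in order to transfer the abstract estimate from the F\o lner-average sequence to an arbitrary point of the closed convex hull of the orbit.
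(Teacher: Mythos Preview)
Your argument is essentially the paper's: form barycenters $y_n=b_{|F_n|}(F_n)$ of a F\o lner sequence inside the orbit, combine the equivariance of the barycenter map with Corollary~\ref{foln} and the F\o lner condition to get $d(y_n,\gamma y_n)\to 0$ for each $\gamma$, then invoke Property~(C) for the family $\{\rho(\gamma):\gamma\in\Gamma\}$ to obtain a $\Gamma$-fixed point convex close to $\{y_n\}$. The decomposition $F_n=(F_n\cap\gamma F_n)\sqcup(F_n\setminus\gamma F_n)$ and the resulting estimate match the paper's computation almost verbatim.

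The only divergence is in the final ``upgrading'' step, which you rightly flag as the delicate point. The paper dispatches it in a single line,
\[
d(\tilde x,\conv(\Gamma x))\le d(\tilde x,\conv\{x_n\})\le\diam\conv(\{x_n\})\le\diam\conv(\Gamma x),
\]
which, read literally, bounds only the distance from $\tilde x$ to the \emph{set} $\conv(\Gamma x)$ rather than to every individual point of it. Your additional observation---that $\Gamma$-invariance of the fixed point $y$ forces $d(y,\gamma x)$ to be constant in $\gamma$, so that by convexity of $d(y,\cdot)$ along the bicombing the supremum over $\conv(\Gamma x)$ collapses to the single quantity $d(y,x)$---is a genuine refinement that the paper does not spell out, and it is exactly the right mechanism for transferring the abstract Property~(C) estimate on $\{y_n\}$ to the full convex hull of the orbit.
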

\begin{proof}\frei\\
 Let $F_n\subset \Gamma x$ be a F\o lner-sequence for the restricted action of $\Gamma$ on $\Gamma x$.\par 
Consider the sequence $\left(x_n\right)_{n\in\N}:=\left(b_{\vert F_n\vert}\left(F_n\right)\right)_{n\in\N}$ in $\conv(\Gamma x)$. By construction, any $\gamma $ lies in $U_n$ $\forall n>N_\gamma$ (with $N_\gamma$ big enough) and by the definition of $F_n$ and Corollary \ref{foln} we get for any $\gamma \in \Gamma$
\begin{align*}
d(x_n,\gamma x_n)&=d\left(b_{\vert F_n\vert}\left(F_n\right),\gamma b_{\vert F_n\vert}\left(F_n\right)\right)=d\left(b_{\vert F_n\vert}\left(F_n\right),b_{\vert F_n\vert}\left(\gamma F_n\right)\right)\\
&=d\left(b_{\vert F_n\vert}\left(F_n\cap \gamma F_n\cup\left(F_n\setminus \gamma F_n\right)\right),b_{\vert F_n\vert}\left(F_n\cap \gamma F_n\cup\left(\gamma F_n\setminus F_n\right)\right)\right)\\
&\leq\frac{\left\vert F_n\setminus \gamma F_n\right\vert}{\vert F_n\vert}\diam\left(F_n\Delta \gamma F_n\right)\leq\frac{|F_n\Delta \gamma F_n|}{2|F_n|}\diam (\gamma F_n\cup F_n)\\
&\leq\frac{|F_n\Delta \gamma F_n|}{2|F_n|}\diam(\Gamma x)\limto\limits_{n\to\infty}0
\end{align*}
By definition, Property (C) implies the existence of some $\tilde x$ being convex close to the closed convex hull $\conv(\{x_n,n\in\N\})$ such that $\gamma \tilde x=\tilde x$ for any $\gamma \in \Gamma$ and
\begin{align*}
 d(\tilde x,\conv(\Gamma x))&\leq d(\tilde x,\conv\{x_n,n\in\N\}\leq\diam\conv(\{x_n,n\in\N\})\leq\diam\conv(\Gamma x)
\end{align*}
\end{proof}
\begin{cor}\label{olkj}
 Let $\Gamma$ be an amenable group acting on $\Pp(H)$ by bicombing-respecting isometries. Then, there is a fixed point in $X_\pi$. In particular, this implies that amenable groups are unitarisable as well as one direction in Corollary \ref{amenchar} and Theorem \ref{zwei}.
\end{cor}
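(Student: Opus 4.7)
The plan is to recognize the statement as a direct application of Theorem~\ref{existfixed}, so that the real content has already been established and one only needs to verify the hypotheses. First, by Lemma~\ref{ppisgbc}, $(\Pp(H),d)$ is a GCB-space with property~(C) for the family of isometries $\phi_a:x\mapsto a^*xa$; every $\rho_\pi(\gamma)$ is such a $\phi_a$ and respects the bicombing $\eta$, as recorded in equation~(\ref{equiv}). Next, by Lemma~\ref{diamvert}, the orbit $\Gamma\id_H$ has diameter $2\ln|\pi|<\infty$. Finally, amenability of $\Gamma$ transfers to its transitive action on the orbit: any F\o lner sequence in $\Gamma$ pushes forward through the orbit map $\gamma\mapsto\rho_\pi(\gamma,\id_H)$ to a F\o lner sequence in $\Gamma\id_H$.

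With the hypotheses in place, Theorem~\ref{existfixed} provides a fixed point $\tilde x\in\Pp(H)^\Gamma$ which is convex close to $\conv(\Gamma\id_H)$. In particular $d(\tilde x,\rho_\pi(\gamma,\id_H))\leq\diam(\Gamma\id_H)=\diam(\pi)$ for every $\gamma\in\Gamma$, which is exactly the defining condition for $\tilde x\in X_\pi$. Thus $\tilde x\in X_\pi\cap\Pp(H)^\Gamma$, proving the main assertion.

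The three consequences then follow immediately. Unitarisability of amenable groups: given any uniformly bounded $\pi$, the fixed point $\tilde x$ yields $\sqrt{\tilde x}\in\Ss(\pi)$ via Lemma~\ref{fixedunit}. The amenable-to-fixed-point direction of Corollary~\ref{amenchar} is precisely what was just proved. For the bound $\alpha=2$ (with $\ln K=0$) in Theorem~\ref{zwei}, I would replace $\tilde x$ by its symmetric rescaling $\bar T$ as in Lemma~\ref{symspec}; since rescaling can only decrease the distance to $\id_H$, one obtains $\ln s(\sqrt{\bar T})=d(\id_H,\bar T)\leq d(\id_H,\tilde x)\leq\diam(\pi)=2\ln|\pi|$, i.e.\ $s(\sqrt{\bar T})\leq|\pi|^2$.

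There is no genuine obstacle here: all of the delicate content has been absorbed into Theorem~\ref{existfixed}, which itself rests on the barycenter construction of Theorem~\ref{2finish} together with property~(C) of $\Pp(H)$. The only subtlety worth highlighting is the upgrade from ``fixed point in $\Pp(H)^\Gamma$'' to ``fixed point in $X_\pi$'', and this is handed to us for free because Theorem~\ref{existfixed} locates the fixed point convex close to $\conv(\Gamma\id_H)$, which is strictly stronger than requiring it convex close to $\Gamma\id_H$ itself.
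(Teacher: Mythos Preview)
Your proof is correct and rests on the same core input as the paper's, namely Theorem~\ref{existfixed}. The paper's own argument is the single line ``Apply the above result to $X=X_\pi$'': it treats $X_\pi$ itself as the ambient GCB-space (closed, convex, and $\tau_w$-compact by Proposition~\ref{xpi}, hence inheriting property~(C)), so that the fixed point furnished by Theorem~\ref{existfixed} automatically lies in $X_\pi$. You instead apply Theorem~\ref{existfixed} to the whole of $\Pp(H)$ and then use the ``convex close to $\conv(\Gamma\id_H)$'' conclusion to place the fixed point inside $X_\pi$ after the fact. Both routes are valid; yours has the mild advantage of not needing to verify that property~(C) restricts to $X_\pi$, while the paper's is terser. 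Your explicit unpacking of the three ``in particular'' consequences (unitarisability, one direction of Corollary~\ref{amenchar}, and the bound $s(S)\leq|\pi|^2$ via Lemma~\ref{symspec}) is a welcome addition that the paper leaves to the reader.

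One small remark: your claim that a F\o lner sequence in $\Gamma$ pushes forward to one in $\Gamma\id_H$ is delicate when the stabiliser is nontrivial, since $|\phi(F_n)|$ may shrink faster than $|F_n\Delta\gamma F_n|$. It is safer simply to invoke the paper's Remark that amenable groups always act amenably (equivalently, that $\Gamma$ acts amenably on $\Gamma/H$ for any subgroup $H$), which is exactly what is needed here.
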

\begin{proof}\frei\\
 Apply the above result to $X=X_\pi$.
\end{proof}
One could wonder, if for non-unitarisable groups (or possibly for unitarisable and non-amenable groups, where the fixed point to some group action on $\Pp(H)$ is far away from the $\Gamma$-orbit of $\id_H$), one may find a model for the classifying space (defined in \cite{lueck}, for example) as a bounded subspace of $\Pp(H)$.\par 
The following corollary gives a partial answer to this. The reader may be reminded that an action  of a group on a space $X$ is \textbf{free}, if $\gamma_1x=\gamma_2x$ for some $\gamma_1,\gamma_2\in \Gamma,~x\in X$ implies $\gamma_1=\gamma_2$.
\begin{cor}
 Let $\Gamma$ act on $\Pp(H)$ in a way that is induced by a uniformly bounded representation of $\Gamma$ on $H$. Then $\Gamma$ never acts freely on the set $X_\pi$.\par 
Moreover, every element in $\Gamma$ fixes some point inside $X_\pi$.
\end{cor}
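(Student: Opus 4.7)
The plan is to apply Theorem \ref{existfixed} (or its corollary, Corollary \ref{olkj}) to the cyclic subgroup $\langle\gamma\rangle\subset\Gamma$ for each $\gamma\in\Gamma$. The decisive observation is that $\langle\gamma\rangle$ is either finite or isomorphic to $\Z$, hence amenable, so its action on any set is amenable. This reduces the problem to checking that the hypotheses of Theorem \ref{existfixed} are met when the ambient GCB-space is taken to be $X_\pi$ itself rather than all of $\Pp(H)$.

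First I would verify that $X_\pi$, equipped with the restriction of the geodesic bicombing $\eta$ from $\Pp(H)$, is a GCB-space with property (C). By Proposition \ref{xpi}, $X_\pi$ is $d$-convex, $d$-closed, $d$-bounded and $\tau_w$-compact, so it inherits a well-defined bicombing and is complete (being a $d$-closed subset of the complete space $\Pp(H)$). For property (C), I would repeat the argument of Lemma \ref{ppisgbc}: any bounded sequence in $X_\pi$ admits a $\tau_w$-limit point, which lies inside $X_\pi$ by $\tau_w$-closedness and is convex close to the sequence by Proposition \ref{heineborel}; the fact that this limit point is fixed by any isometry $f_A$ for which $d(x_n,f_A(x_n))\to 0$ follows exactly as in Lemma \ref{ppisgbc}, passing to weak limits of matrix coefficients via the norm-$d$ equivalence from Theorem \ref{sametop}.

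Next I would note that $\id_H\in X_\pi$ (trivially, since $d(\id_H,\rho_\pi(\gamma,\id_H))\leq\diam\rho_\pi(\Gamma,\id_H)$ for every $\gamma$) and that by Proposition \ref{xpi} the $\Gamma$-action restricts to $X_\pi$; in particular the orbit $\langle\gamma\rangle\cdot\id_H\subset X_\pi$ is bounded. Applying Theorem \ref{existfixed} to the amenable group $\langle\gamma\rangle$ acting bicombing-respectingly on the GCB-space $X_\pi$ with bounded orbit $\langle\gamma\rangle\cdot\id_H$ then produces a point $x_\gamma\in X_\pi$ fixed by $\langle\gamma\rangle$, and in particular by $\gamma$.

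This already implies the first claim: if $\gamma\neq e$, then $\gamma\cdot x_\gamma=x_\gamma=e\cdot x_\gamma$ with $\gamma\neq e$, contradicting freeness on $X_\pi$. The only real point that needs care is the verification that $X_\pi$ inherits property (C); all other steps are direct applications of previously established results, so I do not expect a significant obstacle beyond making the inheritance argument precise.
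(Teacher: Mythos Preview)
Your proposal is correct and follows essentially the same approach as the paper's proof: each $\gamma\in\Gamma$ generates a cyclic (hence amenable) subgroup, and one applies Theorem~\ref{existfixed} with $X=X_\pi$ to obtain a fixed point for $\langle\gamma\rangle$ inside $X_\pi$. You spell out in more detail than the paper why $X_\pi$ is itself a GCB-space with property~(C), but this is precisely what the paper already uses implicitly in Corollary~\ref{olkj} (whose proof is the one-line ``Apply the above result to $X=X_\pi$''), so the arguments coincide.
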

\begin{proof}\frei\\
Every $\gamma\in\Gamma$ generates an amenable subgroup (finite or $\Z$). Thus, there is a fixed point for this subgroup in $X_\pi$ (we apply Theorem \ref{existfixed} to $X_\pi$).
\end{proof}
Since actions on $\Pp(H)$ coming from linear representations are never free, it is natural to ask for possible stabilizers. The following theorem shows, that a group $\Gamma$ acting on a GCB space $X$ by bicombing respecting maps will either have a fixed point or all stabilizers are of infinite index.
\begin{thm}\label{nochgebraucht}
 Let $\Gamma$ act by bicombing-respecting maps on a GCB-space $X$ such that some finite index subgroup $\Lambda<\Gamma$ fixes a point in $X$. Then $\Gamma$ has a fixed point.
\end{thm}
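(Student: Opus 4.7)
The plan is to construct a finite $\Gamma$-orbit in $X$ and then invoke the barycenter machinery of Theorem \ref{2finish} to produce a $\Gamma$-fixed point from it.

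First I would pick a point $x_0 \in X$ fixed by $\Lambda$ and consider its full stabiliser $\Lambda' := \mathrm{Stab}_\Gamma(x_0)$. Since $\Lambda \subseteq \Lambda'$, the subgroup $\Lambda'$ still has finite index in $\Gamma$, say $n = [\Gamma : \Lambda']$. Choosing coset representatives $\gamma_1, \dots, \gamma_n$, the orbit
\begin{align*}
\Gamma x_0 = \{\gamma_1 x_0, \dots, \gamma_n x_0\}
\end{align*}
consists of exactly $n$ points; call them $x_1, \dots, x_n$ and form the unordered tuple $A := (x_1, \dots, x_n) \in X_{(n)}$.

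Next, for any $\gamma \in \Gamma$, left multiplication by $\gamma$ permutes the cosets and hence permutes the set $\{x_1, \dots, x_n\}$. In the $n$-tuple space $X_{(n)} = \prod_{i \in [n]} X / S_n$, this means that $\tilde\gamma(A) = A$, because the tuple is considered up to the symmetric group action. Applying the non-expansive barycenter map $b_n$ supplied by Theorem \ref{2finish}, and using that each $\gamma \in \Gamma$ acts on $X$ by a bicombing-respecting map (hence $b_n$ is equivariant with respect to it), we obtain
\begin{align*}
\gamma \cdot b_n(A) = b_n(\tilde\gamma(A)) = b_n(A) \quad \text{for every } \gamma \in \Gamma.
\end{align*}
Therefore $b_n(A) \in X$ is the desired $\Gamma$-fixed point.

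There is essentially no obstacle in this proof beyond observing that the equivariance of $b_n$ (property (2) of a barycenter map) combined with $S_n$-invariance of the tuple space makes $b_n(A)$ automatically $\Gamma$-invariant; the finite-index hypothesis is used only to ensure that the orbit has finite cardinality so that $b_n$ is actually defined on it. In particular, no analogue of property (C) or of amenability is needed here, in contrast to Theorem \ref{existfixed}.
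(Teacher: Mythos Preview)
Your argument is correct and follows essentially the same route as the paper: form the finite $\Gamma$-orbit of a $\Lambda$-fixed point, observe that $\Gamma$ permutes the entries of the corresponding unordered tuple, and apply the equivariant barycenter map $b_n$ from Theorem~\ref{2finish}. The only cosmetic difference is that you first pass to the full stabiliser $\Lambda'=\mathrm{Stab}_\Gamma(x_0)$ so that the orbit has exactly $n$ distinct points, whereas the paper works directly with coset representatives of $\Lambda$ (allowing possible repetitions in the tuple); both versions lead to the same fixed point $b_n(A)$.
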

\begin{proof}\frei\\
Let $\Lambda<\Gamma$ be a subgroup of index $n$ having a fixed point $x$ in $X$. Furthermore, let $\{e=\gamma _1,..,\gamma _n\}\subset \Gamma$ be a choice of representatives of the cosets $\Gamma/H$. \par 
Then $\Gamma=\dcup\limits_{i\in[n]}\gamma _iH$ and multiplying from the left with elements from the set $\{\gamma _i,i\in[n]\}$ or $H$ permutes the cosets $\gamma _iH$. In other words, multiplying with $\gamma \in \Gamma$ yields a bijection $\phi_\gamma :[n]\to[n]$ in such a way, that $\gamma \gamma _i\in \gamma _{\phi_\gamma (i)}H$.\par 
Define $y:=b(\{\gamma _i\cdot x|i\in[n]\})$. Then, for arbitrary $\gamma \in \Gamma$, we see, that for some $\lambda\in\Lambda$
\begin{align*}
 \gamma \cdot y&=\gamma \cdot b(\{\gamma _i\cdot x|i\in[n]\})=b(\{\gamma \gamma _i\cdot x|i\in[n]\})=b(\{\gamma _{\phi_\gamma (i)}\lambda\cdot x|i\in[n]\})=y
\end{align*}
and $y$ is a fixed point for the $\Gamma$-action.
\end{proof}
\begin{rem}
 In the theorem above, we did not assume Property (C) or boundedness.
\end{rem}
We can immediately conclude the following corollary, which shows in particular that virtually unitarisable groups are unitarisable.
\begin{cor}\label{virtunit}
 Let $\Gamma$ be a group containing a finite-index unitarisable subgroup $\Lambda$. Then $\Gamma$ is unitarisable and the constants in Theorems $\ref{eins}$ and $\ref{einsgeom}$ are at most the infimum over the corresponding constants coming from unitarisable finite index subgroups.
\end{cor}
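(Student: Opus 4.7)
The plan is to combine the fixed point provided by unitarisability of $\Lambda$ with the barycentric lifting of Theorem \ref{nochgebraucht}, then pass back to a unitariser via Lemma \ref{fixedunit} and Lemma \ref{symspec}. Fix a uniformly bounded representation $\pi$ of $\Gamma$ on $H$. Its restriction $\pi|_\Lambda$ is uniformly bounded with $\vert\pi|_\Lambda\vert\leq\vert\pi\vert$, hence $\diam(\pi|_\Lambda)\leq\diam(\pi)$. Applying Corollary \ref{einsgeom} to $\Lambda$, together with the spectral-symmetry adjustment of Lemma \ref{symspec}, produces a fixed point $T\in\Pp(H)^\Lambda$ with
$$d(\id_H,T)\leq C(\Lambda)+\tfrac{\alpha(\Lambda)}{2}\diam(\pi).$$
The action $\rho_\pi$ respects the geodesic bicombing by (\ref{equiv}) and $\Pp(H)$ is a GCB-space (Lemma \ref{ppisgbc}); thus Theorem \ref{nochgebraucht}, applied to the finite-index subgroup $\Lambda$, supplies the $\Gamma$-fixed point
$$y:=b_n\bigl(\{\gamma_1\cdot T,\ldots,\gamma_n\cdot T\}\bigr)\in\Pp(H)^\Gamma,$$
where $\{\gamma_1,\ldots,\gamma_n\}$ is a transversal of $\Gamma/\Lambda$ and $b_n$ is the non-expansive barycenter from Theorem \ref{2finish}. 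By Lemma \ref{fixedunit}, $\sqrt y$ unitarises $\pi$, settling unitarisability of $\Gamma$.

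To extract quantitative constants, I would note that $y\in\conv\{\gamma_1 T,\ldots,\gamma_n T\}$ by the first defining property of the barycenter map. Setting $R:=\max_i d(\id_H,\gamma_i T)$, the closed ball $\overline{B(\id_H,R)}$ is convex by Theorem \ref{respect}, hence contains this convex hull, so $d(\id_H,y)\leq R$. The triangle inequality, combined with the fact that each $\gamma_i$ acts by a $d$-isometry, gives
$$d(\id_H,\gamma_i T)\leq d(\id_H,\gamma_i\id_H)+d(\gamma_i\id_H,\gamma_i T)\leq\diam(\pi)+d(\id_H,T),$$
and consequently $d(\id_H,y)\leq C(\Lambda)+\tfrac{\alpha(\Lambda)+2}{2}\diam(\pi)$. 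The spectral inequality implicit in Lemma \ref{symspec} yields $\ln s(\sqrt y)\leq d(\id_H,y)$, and exponentiating together with $\diam(\pi)=2\ln\vert\pi\vert$ from Lemma \ref{diamvert} delivers $s(\sqrt y)\leq K(\Lambda)\cdot\vert\pi\vert^{\alpha(\Lambda)+2}$. Taking the infimum over all unitarisable finite-index subgroups then gives the bound asserted in the corollary.

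The main obstacle I anticipate is the sharpness of the constants: the triangle-inequality split above costs an additive $2$ in the exponent, because $\diam(\pi)$ enters both in estimating $d(\id_H,T)$ via Corollary \ref{einsgeom} and in relocating each $\gamma_i T$ back near $\id_H$. Matching the cleaner form hinted at by the phrasing ``at most the infimum of the corresponding constants'' seems to require either a more judicious choice of $T$ within the convex $\Lambda$-fixed set, or a spectral re-centering of the final estimate about $y$ itself rather than $\id_H$—an easy step once one realises that the coset displacements $d(\id_H,\gamma_i\id_H)$ are themselves absorbed in the circumradius of the $\Gamma$-orbit rather than its diameter.
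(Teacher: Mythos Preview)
Your argument for unitarisability follows the paper's proof exactly: locate a $\Lambda$-fixed point $T$ via Corollary~\ref{einsgeom}, then form the barycenter of its coset translates using Theorem~\ref{nochgebraucht} to obtain a $\Gamma$-fixed point. This part is correct and identical to the paper's approach.

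For the quantitative estimate you establish $\alpha(\Gamma)\leq\alpha(\Lambda)+2$, whereas the paper claims $\alpha(\Gamma)\leq\alpha(\Lambda)$ and $K(\Gamma)\leq K(\Lambda)$. The paper's justification is the sentence ``Hence, it is at most $C(\Lambda)+\tfrac{\alpha(\Lambda)}2\diam(\pi)$ away from the $\Gamma$-orbit of $\id_H$'', asserted of the barycenter $y=b_n(\gamma_i T)$. But this step is not substantiated: each translate $\gamma_i T$ lies in $\overline{B(\gamma_i\id_H,R)}$ with $R=C(\Lambda)+\tfrac{\alpha(\Lambda)}2\diam(\pi)$, yet these are \emph{different} balls, and neither convexity of balls nor non-expansiveness of $b_n$ forces $y$ into any one of them. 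Your triangle-inequality split is exactly what is available here, and it costs the extra $\diam(\pi)$.

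In fact the paper's stronger statement is false. Take $\Gamma=\Z/2\Z$ with $\Lambda=\{e\}$, so that $K(\Lambda)=1$, $\alpha(\Lambda)=0$ are admissible, and let $\pi$ send the generator to $\left(\begin{smallmatrix}0&t\\t^{-1}&0\end{smallmatrix}\right)$ on $\mathbb C^2$ for $t\geq1$. Then $|\pi|=t$, and a direct computation shows the smallest unitariser has size exactly $t$; the requirement $t\leq K\,t^\alpha$ for all $t\geq1$ forces $\alpha\geq1$. Hence $\alpha(\Gamma)\leq\alpha(\Lambda)$ is impossible. The obstacle you anticipated in your final paragraph is therefore genuine and not removable by any re-centering trick; your bound $\alpha(\Gamma)\leq\alpha(\Lambda)+2$ is the correct conclusion, and it matches what the paper itself proves in the amenable-extension case (Theorem~\ref{extend}).
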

\begin{proof}\frei\\
 Let $\pi:\Gamma\to\Aut(H)$ be a uniformly bounded representation of $\Gamma$ on some Hilbert space $H$, and $\rho_\pi$ the induced action on $\Pp(H)$.\par 
Then, by Corollary \ref{einsgeom}, there is a $\Lambda$-fixed point $x_\Lambda$, $C(\Lambda)+\frac{\alpha(\Lambda)}2\diam(\pi)$-close to the $\Lambda$-orbit of $\id_H\in\Pp(H)$. Hence it lies in the $C(\Lambda)+\frac{\alpha(\Lambda)}2\diam(\pi)$-neighbourhood of the $\Gamma$-orbit of $\id_H$.\par 
Now, Theorem \ref{nochgebraucht} yields a $\Gamma$-fixed point (proving, that $\Gamma$ is unitarisable), which (by construction) is the barycenter of the finite set of $\Gamma$-translates of $x_\Lambda$. Hence, it is at most $C(\Lambda)+\frac{\alpha(\Lambda)}2\diam(\pi)$ away from the $\Gamma$-orbit of $\id_H$ (and therefore, as it is a $\Gamma$-fixed point, from $\id_H$ itself). \par 
Thus, we have $\alpha(\Gamma)\leq\alpha(\Lambda)$ as well as $C(\Gamma)\leq C(\Lambda)$ (and therefore, $K(\Gamma)\leq K(\Lambda)$ for the universal constant $K$ in Theorem \ref{eins}).
\end{proof}
For the following corollary, the reader may be reminded, that an action of a group $\Gamma$ on a topological space $X$ is \textbf{proper}, if preimages of compact subsets of $X\times X$ under the map $\rho:\Gamma\times X\to X\times X,~(\gamma ,x)\mapsto(\gamma x,x)$ are compact.
\begin{cor}\label{yuio}
 Let $\Gamma$ be a discrete group acting properly on a Property (C) GCB-space $X$ by bicombing-respecting isometries and with at least one bounded orbit.\par 
Then, every amenable subgroup of $\Lambda$ is finite and $\Gamma$ is a torsion group.
\end{cor}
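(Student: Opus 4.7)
The plan is to combine the fixed-point theorem (Theorem \ref{existfixed}) with the finite-stabilizer consequence of properness, and then deduce the torsion statement by applying the result to cyclic subgroups.

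First I would pick an amenable subgroup $\Lambda\leq\Gamma$ and a bounded $\Gamma$-orbit $\Gamma x\subset X$. Since $\Lambda x\subseteq\Gamma x$, the $\Lambda$-orbit of $x$ is also bounded. Because $\Lambda$ is amenable, the remark after the definition of amenable action tells us that $\Lambda$ acts amenably on any set, and in particular on its bounded orbit $\Lambda x$. The hypotheses of Theorem \ref{existfixed} are then satisfied for $\Lambda\curvearrowright X$, so there exists $y\in X$ fixed by all of $\Lambda$.

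Next I would extract finiteness of $\Lambda$ from properness. Since $\Gamma$ acts properly, the preimage of the compact set $\{(y,y)\}\subset X\times X$ under $(\gamma,z)\mapsto(\gamma z,z)$ is compact; this preimage equals $\mathrm{Stab}_\Gamma(y)\times\{y\}$. A compact subset of the discrete group $\Gamma$ is finite, so $\mathrm{Stab}_\Gamma(y)$ is finite, and therefore $\Lambda\subseteq\mathrm{Stab}_\Gamma(y)$ is finite as well.

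Finally, the torsion statement follows by specializing: for any $\gamma\in\Gamma$, the cyclic subgroup $\langle\gamma\rangle$ is abelian and hence amenable, so by what we just proved it is finite, whence $\gamma$ has finite order. The only subtle point is the passage from ``$\Lambda$ amenable'' to ``the action of $\Lambda$ on a bounded orbit is amenable in the F\o{}lner sense required by Theorem \ref{existfixed}'', but this is exactly the content of the remark recorded in the text, so no real obstacle arises.
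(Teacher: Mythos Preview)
Your proposal is correct and follows essentially the same approach as the paper: apply Theorem \ref{existfixed} to obtain a $\Lambda$-fixed point, observe that properness forces the stabilizer of this point (and hence $\Lambda$) to be finite, and then specialise to cyclic subgroups for the torsion conclusion. Your write-up simply makes explicit the properness argument (via the preimage of $\{(y,y)\}$) and the passage to bounded $\Lambda$-orbits that the paper leaves implicit.
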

\begin{proof}\frei\\
$\Lambda<\Gamma$ be an amenable subgroup and $x\in X$ be a fixed point of $\Lambda$ (by Theorem \ref{existfixed}).\par 
Then, if $\Lambda$ is of infinite order, there is an infinite stabilizer for some $x\in X$ and the action cannot be proper. In particular, since every element generates an amenable subgroup, $\Gamma$ has to be a torsion group.
\end{proof}
\begin{thm}\label{extend}
Let $0\to \Gamma'\to \Gamma\to \Lambda\to 0$ be an extension of a unitarisable group $\Gamma'$ by an amenable group $\Lambda$. Then, $\Gamma$ is unitarisable and its universal constants (as defined in Theorem \ref{eins}) fullfill $K(\Gamma)\leq K(\Gamma')$ and $\alpha(\Gamma)\leq\alpha(\Gamma')+2$.
\end{thm}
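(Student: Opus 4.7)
The plan is to combine Corollary \ref{einsgeom}, applied to the restriction $\pi|_{\Gamma'}$, with the amenable fixed-point theorem (Theorem \ref{existfixed}) applied to the quotient $\Lambda$. Since $\Gamma'$ is unitarisable and $\diam(\pi|_{\Gamma'}) \le \diam(\pi)$, Corollary \ref{einsgeom} produces a $\Gamma'$-fixed point $x_0 \in \Pp(H)^{\Gamma'}$ with
\[d(x_0, \id_H) \le C(\Gamma') + \frac{\alpha(\Gamma')}{2}\diam(\pi).\]
Normality of $\Gamma'$ in $\Gamma$ makes $\Pp(H)^{\Gamma'}$ a $\Gamma$-invariant subset, on which the $\Gamma$-action factors through $\Lambda$. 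Being $d$-closed and $d$-convex in $\Pp(H)$, the subspace $\Pp(H)^{\Gamma'}$ inherits the GCB-structure, and Property (C) passes to it because the $\tau_w$-limit constructed in Lemma \ref{ppisgbc} from a bounded sequence of $\Gamma'$-fixed points remains $\Gamma'$-fixed (as $d(x_n, \gamma' x_n) = 0$ for $\gamma' \in \Gamma'$ forces $\gamma' \tilde x = \tilde x$ in the limit).

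Now apply Theorem \ref{existfixed} inside $\Pp(H)^{\Gamma'}$ to the amenable $\Lambda$-action on the bounded orbit $\Lambda x_0$ (whose diameter is at most $2d(x_0, \id_H) + \diam(\pi)$ by the triangle inequality combined with isometricity of the action). This produces a $\Lambda$-fixed point $\tilde x \in \Pp(H)^{\Gamma'}$, which is simultaneously $\Gamma$-fixed. From the proof of Theorem \ref{existfixed}, $\tilde x$ is realised as a $\tau_w$-limit point of the F\o lner-averaged sequence $x_n := b_{|F_n|}(F_n)$ with $F_n \subset \Lambda x_0$.

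The quantitative heart of the argument is to bound $d(\tilde x, \id_H)$. Comparing the tuple $F_n$ with the constant tuple $(\id_H, \dots, \id_H)$ and using the non-expansion of the barycenter map (Theorem \ref{2finish}), one gets
\[d(x_n, \id_H) \le \frac{1}{|F_n|}\sum_{z \in F_n} d(z, \id_H).\]
Every $z \in F_n$ has the form $\lambda x_0$ for some $\lambda \in \Lambda$, and the isometricity of the $\Lambda$-action with the triangle inequality yields $d(\lambda x_0, \id_H) \le d(x_0, \id_H) + \diam(\pi)$. Hence $(x_n)$ lies inside $\overline{B^d\bigl(\id_H,\, d(x_0, \id_H) + \diam(\pi)\bigr)}$, and Proposition \ref{heineborel} transfers the bound to the $\tau_w$-limit:
\[d(\tilde x, \id_H) \le C(\Gamma') + \frac{\alpha(\Gamma') + 2}{2}\diam(\pi).\]
Rescaling $\tilde x$ to have symmetric spectrum as in Lemma \ref{symspec} only decreases this distance, so by Lemma \ref{diamvert} the positive unitariser $\sqrt{\tilde x}$ has size at most $K(\Gamma') \cdot |\pi|^{\alpha(\Gamma') + 2}$, yielding both $K(\Gamma) \le K(\Gamma')$ and $\alpha(\Gamma) \le \alpha(\Gamma') + 2$. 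The main subtlety is to deploy the non-expansion bound in the averaging direction $d(b(F_n), y) \le \frac{1}{|F_n|}\sum_{z \in F_n} d(z, y)$ rather than the coarser convex-close bound $d(b(F_n), x_0) \le \diam(F_n)$; without this refinement one would pay an extra multiplicative factor of roughly three in both $C(\Gamma')$ and $\alpha(\Gamma')$.
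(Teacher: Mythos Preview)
Your proof is correct and follows essentially the same architecture as the paper: pick a $\Gamma'$-fixed point close to $\id_H$ via Corollary~\ref{einsgeom}, use normality to get a $\Lambda$-action on the GCB-subspace $\Pp(H)^{\Gamma'}$, and apply Theorem~\ref{existfixed} to produce a $\Gamma$-fixed point whose distance to $\id_H$ is controlled. Your extraction of the quantitative bound is in fact slightly cleaner than the paper's: where the paper argues that the F\o lner barycenters lie in a closed neighbourhood of the $\tau_w$-compact set $X_\pi$ and then adds $\diam(\Gamma\id)$, you compare directly to the constant tuple $(\id_H,\dots,\id_H)$ via non-expansion of $b_n$ and invoke Proposition~\ref{heineborel} on a single $d$-ball around $\id_H$, which avoids the detour through $X_\pi$ and yields the same constants.
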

\begin{proof}\frei\\
Let $\pi$ be a uniformly bounded representation of $\Gamma$ on $H$ and  let $x\mapsto \gamma x$ denote the induced action on $\Pp(H)$. Then the fixed point set $\Pp(H)^{\Gamma'}$ of the subgroup $\Gamma'$ is non-empty ($\Gamma'$ being unitarisable), closed (with respect to $\tau_d$ and $\tau_w$) and convex (both, linearly and metrically).\par 
Also, since $\Gamma'$ is normal in $\Gamma$, we have $\gamma_1^{-1}\gamma_2\gamma_1\in \Gamma'$ for all $\gamma_1\in \Gamma,~\forall \gamma_2\in \Gamma'$ and hence
\begin{align*}
\gamma_1\cdot (\gamma_2x)&=\left(\gamma_2\left(\gamma_2^{-1}\gamma_1\gamma_2\right)\gamma_2^{-1}\right)\cdot \gamma_2x=\gamma_2\cdot\left(\gamma_2^{-1}\gamma_1\gamma_2\right)\cdot x=\gamma_2x&\forall x\in\Pp(H)^{\Gamma'}
\end{align*}
proving, that $\Pp(H)^{\Gamma'}$ is a $\Gamma$-invariant Property-(C) GCB-space with trivial $\Gamma'$-action.\par 
Now, fix some $\tilde x\in\Pp(H)^{\Gamma'}$ minimizing the distance to $\id$. Then, we have
\begin{align*}
d(\tilde x,\id)\leq\ln K(\Gamma')+\frac{\alpha(\Gamma')}2\diam(\Gamma'\id)\leq\ln K(\Gamma')+\frac{\alpha(\Gamma')}2\diam(\Gamma\id)\end{align*}
and $\tilde x$ is in the $\left(\ln K(\Gamma')+\frac{\alpha(\Gamma')}2\diam(\Gamma\id)\right)$-neighbourhood of the $\Gamma$-orbit $\Gamma\cdot\id_H$. But then, this is also true for any image of $\tilde x$ under $\rho_\pi(\gamma)$.
\par Hence, we have $\Gamma/\Gamma'\cong\Lambda$ acting on $\Pp(H)^{\Gamma'}$ with bounded orbits and by Theorem \ref{existfixed}, we find a $\Lambda$-fixed point $\hat x$ convex close the orbit $\Gamma\tilde x$ in $\Pp(H)^{\Gamma'}$. In particular, $\hat x$ is fixed by the whole group $\Gamma$ and hence implies the unitarisability of $\pi$.\par 
By construction, $\hat x$ is a weak operator limit of points, which lie in the closed convex hull of the $\Gamma\tilde x$ which in turn had a distance of at most $\left(\ln K(\Gamma')+\frac{\alpha(\Gamma')}2\diam(\Gamma\id)\right)$ from $\Gamma\id$. \par
Hence, the sequence lies in the closed $\left(\ln K(\Gamma')+\frac{\alpha(\Gamma')}2\diam(\Gamma\id)\right)$-neighbourhood of the $\tau_w$-compact space $X_\pi$. Hence, it is itself $\tau_w$-compact. Therefore, the limit point $\hat x$ will be at most of distance $\left(\ln K(\Gamma')+\frac{\alpha(\Gamma')}2\diam(\Gamma\id)\right)$ to $X_\pi$ and therefore,
\begin{align*}
d(\hat x,\Gamma\id)\leq d(\hat x,X_\pi)+\diam(\Gamma\id)\leq\left(\ln K(\Gamma')+\frac{\alpha(\Gamma')+2}2\diam(\Gamma\id)\right)
\end{align*} 
\end{proof}
\begin{rem}\label{end}
Observe, that by moving from some unitarisable group $\Gamma'$ to a group $\Gamma$, which is an extension of $\Gamma'$ by some amenable group or contains $\Gamma'$ as a finite index subgroup, we don't change its "distance from being amenable" in the following way:\par 
Remember, that a group $G$ is amenable, if and only if for any $G$-action on $\Pp(H)$ coming from some representation, we can find a fixed point in $X_\pi$. In this way, we can say, that a unitarisable group $G$ is $\delta$ away from being amenable ($\delta$ being a linear function of the size of the representation), if we always find a fixed point, which has distance at most $\delta$ from $X_\pi$.\\[0.3cm]
Now, as we see from the proofs above, even though the constant $\alpha(\Gamma)$ might be different from $\alpha(\Gamma')$, we always find a fixed point in the $\left(\ln K(\Gamma')+\frac{\alpha(\Gamma')}2\diam(\Gamma\id)\right)$-neighbourhood of $X_\pi$.
\end{rem}

\begin{bibdiv}
\begin{biblist}

\bib{brid}{book} {
    AUTHOR = {M. Bridson},
author={A. Haefliger},
     TITLE = {Metric spaces of non-positive curvature},
    SERIES = {Grundlehren der Mathematischen Wissenschaften [Fundamental
              Principles of Mathematical Sciences]},
    VOLUME = {319},
 PUBLISHER = {Springer-Verlag},
   ADDRESS = {Berlin},
      YEAR = {1999},
     }

\bib{correcht}{article}{
   author={Corach, G.},
   author={Porta, H.},
   author={Recht, L.},
   title={Convexity of the geodesic distance on spaces of positive operators},
   journal={Illinois journal of mathematics},
   volume={38},
   date={1994},
   pages={87-94},
 	}
\bib{day}{article}{
    AUTHOR = {Day, M.},
     TITLE = {Means for the bounded functions and ergodicity of the bounded
              representations of semi-groups},
   JOURNAL = {Trans. Amer. Math. Soc.},
    VOLUME = {69},
      YEAR = {1950},
     PAGES = {276--291},
  }

\bib{dix}{article}{
   author={Dixmier, J.},
   title={Les moyennes invariants dans les semi-groups et leurs applications},
   journal={Acta Sci. Math. Szeged},
   date={1950},
   number={12},
   pages={213-227},}

\bib{ehrenp}{article}{
AUTHOR = {Ehrenpreis, L.},
AUTHOR = {Mautner, F.},
     TITLE = {Uniformly bounded representations of groups},
   JOURNAL = {Proc. Nat. Acad. Sci. U. S. A.},
     VOLUME = {41},
      YEAR = {1955},
     PAGES = {231--233}}
\bib{epm}{article}{
    AUTHOR = {Epstein, I.},author={Monod, N.},
     TITLE = {Nonunitarizable representations and random forests},
   JOURNAL = {Int. Math. Res. Not. IMRN},
      YEAR = {2009},
    NUMBER = {22},
     PAGES = {4336--4353},
  }
\bib{glasm}{article} {
    AUTHOR = {Glasner, Y.},author={Monod, N.},
     TITLE = {Amenable actions, free products and a fixed point property},
   JOURNAL = {Bull. Lond. Math. Soc.},
    VOLUME = {39},
      YEAR = {2007},
    NUMBER = {1},
     PAGES = {138--150},
  }

\bib{fixed}{article}{
AUTHOR = {Horvath, Ch.},
     TITLE = {A note on metric spaces with continuous midpoints},
   JOURNAL = {Ann. Acad. Rom. Sci. Ser. Math. Appl.},
      VOLUME = {1},
      YEAR = {2009},
    NUMBER = {2},
     PAGES = {252--288},}

\bib{kunz}{article}{
    AUTHOR = {Kunze, R. A.},author={Stein, E.},
     TITLE = {Uniformly bounded representations and harmonic analysis of the
              {$2\times 2$} real unimodular group},
   JOURNAL = {Amer. J. Math.},
    VOLUME = {82},
      YEAR = {1960},
     PAGES = {1-62},
}

\bib{lueck}{book}{
AUTHOR = {L{\"u}ck, W.},
     TITLE = {{$L^2$}-invariants: theory and applications to geometry and
              {$K$}-theory},
    SERIES = {Ergebnisse der Mathematik und ihrer Grenzgebiete. 3. Folge. A
              Series of Modern Surveys in Mathematics [Results in
              Mathematics and Related Areas. 3rd Series. A Series of Modern
              Surveys in Mathematics]},
    VOLUME = {44},
 PUBLISHER = {Springer-Verlag},
   ADDRESS = {Berlin},
      YEAR = {2002},
     PAGES = {xvi+595}}
     
\bib{moln}{article}{
AUTHOR = {Molnar, L.},
     TITLE = {Thompson isometries of the space of invertible positive
              operators},
   JOURNAL = {Proc. Amer. Math. Soc.},
    VOLUME = {137},
      YEAR = {2009},
    NUMBER = {11},
     PAGES = {3849--3859},
}

\bib{moz}{article} {
    AUTHOR = {Monod, N.},AUTHOR={Ozawa, N},
     TITLE = {The {D}ixmier problem, lamplighters and {B}urnside groups},
   JOURNAL = {J. Funct. Anal.},
    VOLUME = {258},
      YEAR = {2010},
    NUMBER = {1},
     PAGES = {255--259},
      ISSN = {0022-1236},
  }

\bib{naka}{article}{
    AUTHOR = {Nakamura, M.},author={Takeda, Z.},
     TITLE = {Group representation and {B}anach limit},
   JOURNAL = {T\^ohoku Math. J. (2)},
      VOLUME = {3},
      YEAR = {1951},
     PAGES = {132--135},
  }
\bib{ols}{article}{
    AUTHOR = {Olshanskii, A. Ju.},
     TITLE = {On the question of the existence of an invariant mean on a
              group},
   JOURNAL = {Uspekhi Mat. Nauk},
    VOLUME = {35},
      YEAR = {1980},
    NUMBER = {4(214)},
     PAGES = {199--200},
  }
\bib{osin}{article}{
 AUTHOR = {Osin, D.},
     TITLE = {{$L^2$}-{B}etti numbers and non-unitarizable groups without
              free subgroups},
   JOURNAL = {Int. Math. Res. Not. IMRN},
     YEAR = {2009},
    NUMBER = {22},
     PAGES = {4220--4231}}
     
\bib{oza}{article}{
AUTHOR={Ozawa,N.},
TITLE={An Invitation to the Similarity Problems (after Pisier)},
YEAR={2006},
NOTE={available from the author's webpage}}

 \bib{pis}{article}{
 author={Pisier, G.},
 title={Are unitarizable groups amenable?},
 note={in: \textit{infinite groups: geometric combinatorial and dynamical aspects}},
 series={Progr. Math.},
 number={248},
 publisher={Birk\"auser},
 place={Basel},
 pages={323-362},
 date={2005}
 }

\bib{pis2}{article}{
author={Pisier, G.},
 title={The similarity degree of an operator algebra},
 series={St. Petersburg Math J.},
 number={10},
 date={1999}
 }

\bib{pis3}{book}{
    AUTHOR = {Pisier, G.},
     TITLE = {Similarity problems and completely bounded maps},
    SERIES = {Lecture Notes in Mathematics},
    VOLUME = {1618},
      NOTE = {Includes the solution to ``The Halmos problem''},
 PUBLISHER = {Springer-Verlag},
   ADDRESS = {Berlin},
      YEAR = {2001},
}
\bib{ros}{article}{
author={Rosenblatt, J.},
 title={A generalization of F\o lner condition},
 series={Math. Scand},
 number={33},
 date={1973},
pages={153-177}
 }

\bib{pytsz}{article}{
    AUTHOR = {Pytlik, T},
Author={Szwarc, R.},
     TITLE = {An analytic family of uniformly bounded representations of
              free groups},
   JOURNAL = {Acta Math.},
      VOLUME = {157},
      YEAR = {1986},
    NUMBER = {3-4},
     PAGES = {287--309},
  }
  \bib{schlicht}{thesis}{
    AUTHOR = {Schlicht, P.},
     TITLE = {Amenable groups and a geometric view on unitarisability},
  SCHOOL={Universit\"at Leipzig},
  YEAR={2014},
  NOTE={available online: \href{http://nbn-resolving.de/urn:nbn:de:bsz:15-qucosa-132865}{http://nbn-resolving.de/urn:nbn:de:bsz:15-qucosa-132865}}
   }
\bib{sznagy}{article}{
    AUTHOR = {Sz. Nagy, B.},
     TITLE = {On uniformly bounded linear transformations in {H}ilbert
              space},
   JOURNAL = {Acta Univ. Szeged. Sect. Sci. Math.},
    VOLUME = {11},
      YEAR = {1947},
     PAGES = {152--157},
   }
\bib{thomp}{article}
{ AUTHOR = {Thompson, A. C.},
     TITLE = {On certain contraction mappings in a partially ordered vector
              space. },
   JOURNAL = {Proc. Amer. Math. soc.},
   VOLUME = {14},
      YEAR = {1963},
     PAGES = {438--443},
  
}
\end{biblist}
\end{bibdiv}

\end{document}